\numberwithin{equation}{section}
\renewcommand\d{\partial}
\newcommand\dD{\textrm{d}}
\newcommand\eD{\textrm{e}}
\newcommand\iD{\textrm{i}}
\newcommand\ii{\iD}
\newcommand\dd{\dD}
\newcommand\bdD{\textrm{{\bf d}}}
\def\eps{\varepsilon }
\newcommand{\Id}{{\rm Id}}
\newcommand{\I}{{\rm I}}
\newcommand\br{\begin{remark}}
\newcommand\er{\end{remark}}
\newcommand\bp{\begin{pmatrix}}
\newcommand\ep{\end{pmatrix}}
\newcommand{\be}{\begin{equation}}
\newcommand{\ee}{\end{equation}}
\newcommand\ba{\begin{equation}\begin{aligned}}
\newcommand\ea{\end{aligned}\end{equation}}
\newcommand\ds{\displaystyle}
\newcommand{\beg}{\begin{example}}
\newcommand{\eeg}{\end{exaplem}}
\newcommand{\bpr}{\begin{proposition}}
\newcommand{\epr}{\end{proposition}}
\newcommand{\bt}{\begin{theorem}}
\newcommand{\et}{\end{theorem}}
\newcommand{\bc}{\begin{corollary}}
\newcommand{\ec}{\end{corollary}}
\newcommand{\bl}{\begin{lemma}}
\newcommand{\el}{\end{lemma}}
\newcommand{\bd}{\begin{definition}}
\newcommand{\ed}{\end{definition}}
\newcommand{\brs}{\begin{remarks}}
\newcommand{\ers}{\end{remarks}}
\newtheorem{theorem}{Theorem}[section]
\newtheorem{proposition}[theorem]{Proposition}
\newtheorem{corollary}[theorem]{Corollary}
\newtheorem{lemma}[theorem]{Lemma}
\theoremstyle{remark}
\newtheorem{remark}[theorem]{Remark}
\theoremstyle{definition}
\newtheorem{definition}[theorem]{Definition}
\newtheorem{example}[theorem]{Example}
\newcommand\R{\mathbf R}
\newcommand\C{\mathbf C}
\newcommand{\N}{\mathbf N}
\newcommand{\Z}{\mathbf Z}
\newcommand{\cn}{\operatorname{cn}}
\newcommand{\one}{\mathbf{1}}
\newcommand{\grandO}{\cO}
\newcommand{\lambdae}[1]{\lambda_{{\rm e},#1}}
\newcommand{\lambdap}[1]{\lambda_{{\rm p},#1}}
\newcommand{\Lambdae}{\lambda_{{\rm e}}}
\newcommand{\Lambdap}{\lambda_{{\rm p}}}
\newcommand{\etae}[1]{\eta_{{\rm e},#1}}
\newcommand{\phie}[1]{\phi_{{\rm e},#1}}
\newcommand{\phip}[1]{\phi_{{\rm p},#1}}
\newcommand{\tphie}[1]{\tphi_{{\rm e},#1}}
\newcommand{\tphip}[1]{\tphi_{{\rm p},#1}}
\newcommand{\Phie}{\Phi_{{\rm e}}}
\newcommand{\Phip}{\Phi_{{\rm p}}}
\newcommand{\tPhie}{\tPhi_{{\rm e}}}
\newcommand{\tPhip}{\tPhi_{{\rm p}}}
\newcommand{\Ke}{K_{{\rm e}}}
\newcommand{\Kp}{K_{{\rm p}}}
\newcommand{\Ko}[1]{K_{{\rm o},#1}}
\newcommand{\tKo}[1]{\tK_{{\rm o},#1}}
\newcommand{\ko}[1]{k_{{\rm o},#1}}
\newcommand{\Ge}{G_{{\rm e}}}
\newcommand{\Gp}{G_{{\rm p}}}
\newcommand{\Go}[1]{G_{{\rm o},#1}}
\newcommand{\go}[1]{g_{{\rm o},#1}}
\newcommand{\tGo}[1]{\tG_{{\rm o},#1}}
\newcommand{\Ae}[1]{A_{{\rm e},#1}}
\newcommand{\Ap}[1]{A_{{\rm p},#1}}
\newcommand{\Ao}[1]{A_{{\rm o},#1}}
\newcommand{\Up}[1]{\cU^{#1}}
\newcommand{\tSe}{\tS_{{\rm e}}}
\newcommand{\tSo}{\tS_{{\rm o}}}
\newcommand{\spp}{s^{\rm p}}
\newcommand{\SigW}{\Sigma^{W}}
\newcommand\bC{{\mathbf C}}
\newcommand\bU{{\mathbf U}}
\newcommand\ubU{{\underline \bU}}
\newcommand\uM{{\underline M}}
\newcommand\uP{{\underline P}}
\newcommand\uU{{\underline U}}
\newcommand\ua{{\underline a}}
\newcommand\uc{{\underline c}}
\newcommand\uk{{\underline k}}
\newcommand\upsi{{\underline \psi}}
\newcommand\uom{{\underline \omega}}
\newcommand\cB{{\mathcal B}}
\newcommand\cC{{\mathcal C}}
\newcommand\cM{{\mathcal M}}
\newcommand\cN{{\mathcal N}}
\newcommand\cO{{\mathcal O}}
\newcommand\cP{{\mathcal P}}
\newcommand\cU{{\mathcal U}}
\newcommand\tq{{\widetilde q}}
\newcommand\tbeta{{\widetilde \beta}}
\newcommand\tphi{{\widetilde \phi}}
\newcommand\tchi{{\widetilde \chi}}
\newcommand\tpsi{{\widetilde \psi}}
\newcommand\tG{\widetilde{G}}
\newcommand\tK{\widetilde{K}}
\newcommand\tL{\widetilde{L}}
\newcommand\tM{\widetilde{M}}
\newcommand\tP{\widetilde{P}}
\newcommand\tR{\widetilde{R}}
\newcommand\tS{\widetilde{S}}
\newcommand\tV{\widetilde{V}}
\newcommand\tW{\widetilde{W}}
\newcommand\tPhi{\widetilde{\Phi}}
\title{
Linear asymptotic stability and modulation behavior near periodic waves of the Korteweg--de Vries equation
}
\author{L.~Miguel Rodrigues}
\address{
Universit\'e de Rennes 1,
IRMAR, UMR CNRS 6625,
263 avenue du General Leclerc;
F-35042 Rennes Cedex, FRANCE}
\email{{\tt luis-miguel.rodrigues@univ-rennes1.fr}}
\thanks{Research of L.Miguel Rodrigues was partially supported by the ANR project
BoND ANR-13-BS01-0009-01.\\
}
\begin{document}

\begin{abstract}
We provide a detailed study of the dynamics obtained by linearizing the Korteweg--de Vries equation about one of its periodic traveling waves, a cnoidal wave. In a suitable sense, linearly analogous to space-modulated stability \cite{JNRZ-conservation}, we prove global-in-time bounded stability in any Sobolev space, and asymptotic stability of dispersive type. Furthermore, we provide both a leading-order description of the dynamics in terms of slow modulation of local parameters and asymptotic modulation systems and effective initial data for the evolution of those parameters. This requires a global-in-time study of the dynamics generated by a non normal operator with non constant coefficients. On the road we also prove estimates on oscillatory integrals particularly suitable to derive large-time asymptotic systems that could be of some general interest.

\vspace{1em}

\noindent{\it Keywords}: periodic traveling waves; modulation systems; asymptotic stability; cnoidal waves ; Korteweg--de Vries equation; dispersive estimates; oscillatory integrals.

\vspace{1em}

\noindent{\it 2010 MSC}: 35B10, 35B35, 35P05, 35Q53, 37K45.
\end{abstract}

\maketitle




\section{Introduction}\label{s:introduction}

Substantial efforts have been recently devoted to complete a rather comprehensive theory analyzing the dynamics near periodic waves of dissipative systems of partial differential equations. We refer the reader to \cite{JNRZ-conservation,R,R_Roscoff} for a thorough account of the available parabolic analysis. Comparatively, the parallel analysis of dispersive Hamiltonian systems seems still in its infancy. The goal of the present contribution to a general theory, still to come, is to show, on a case-study, how for the \emph{linearized} dynamics one may completely recover the fine description of parabolic cases.

\subsection{Preliminary observations}

As a preliminary warning we strongly emphasize that we always consider waves as solutions of \emph{extended} systems. This is perfectly common when dealing with asymptotically constant wave profiles --- corresponding to solitary waves, kinks, shocks... --- but is still rather unusual in the dynamical\footnote{As opposed to the literature devoted to spectral studies.} literature when focusing on periodic waves. We believe however that this is the right way to capture some features of wave propagation and in particular to incorporate the rich multi-scale space-time dynamics expected to occur near periodic waves. References on an alternative point of view focusing on bounded domains with \emph{periodic boundary conditions} --- and which has a long and successful history --- may be found in \cite{Angulo-Pava,KapitulaPromislow_book,Benzoni-Mietka-Rodrigues}. Another related feature of our point of view is that we do not break invariance by translation by either focusing on a specific region of the space-time diagram or introducing weighted norms. Though the author does not know any implementation of these strategies in a periodic context\footnote{Excepting those relying on inverse scattering methods.}, for solitary waves and especially those of scalar equations the latter is now a well-established way to effectively bring Hamiltonian equations in a form as parabolic as possible, see \cite{Pego-Weinstein-asymptotic-stability,KapitulaPromislow_book}.

Here we restrict our attention to the Korteweg--de Vries equation as an archetype of dispersive Hamiltonian equation. It also comes with the key advantage of integrability. Indeed, whereas our ultimate goal is to show how to derive dynamical behavior from spectral information and thus to prove theorems where spectral properties are assumed, we believe that it is sounder to do so only after a sufficient number of spectral studies have gathered clear evidence of what is the best notion of spectral stability one may expect. Obviously the latter depends strongly on the nature of the background solution and on the class of system under consideration. We refer the reader to \cite{R,R_Roscoff} for detailed discussions of this question for periodic waves and especially of the notion of \emph{diffusive spectral stability} that has slowly emerged as the relevant set of spectral conditions for periodic waves of dissipative systems. Yet, gathering either analytically or sometimes even numerically the relevant pieces of spectral information may often appear as a daunting task since in general profiles are not known explicitly and almost no a priori knowledge of even a part of the spectrum of relevant operators --- that have variable coefficients that are not asymptotically constant -- is available\footnote{Whereas in the asymptotically constant case the essential spectrum is derived from simple Fourier computations.}. Moreover for the finer dynamical results one may need very precise description of the (critical part of the) spectrum, not usually included in classical spectral studies\footnote{In particular in the Hamiltonian case, since there is no dynamical theory to motivate it...}. The reader may find in \cite{JNRZ-KdV-KS,BJNRZ-KdV-SV} examples of spectral analyses required to apply the abstract parabolic theory. This is where, here, we use integrability to offer relatively elementary proofs of almost all relevant spectral claims and for the remaining ones, gathered in Assumption~\ref{A_intro-bis} below, a way to observe them by well-conditioned soft numerics. On this spectral side we strongly rely on the approach and results of \cite{Bottman-Deconinck}.

Again, we are mostly interested in developing tools to study periodic waves of general dispersive equations so that the use of integrability is here restricted to obtaining relevant spectral information. However for the Korteweg--de Vries equation it is possible to use integrability by inverse scattering in a deeper way and derive large-time asymptotics directly at the nonlinear level, as proved in \cite{Mikikits-PhD,Mikikits-Teschl}. Moreover the decay proved here at the linearized level turns out to be too slow to be used directly in any simple argument yielding a different proof of those nonlinear asymptotics for the Korteweg--de Vries equation. We stress however that the strategy of our proofs seems robust enough to be adapted to cases where one does expect to derive asymptotics for the nonlinear dynamics from bounds on the linearized evolution combined with a priori estimates. To the opinion of the author the analysis of a case where this occurs seems to be the next natural step in laying foundations for a general theory.

\subsection{Bounded stability}

After these preliminary warnings, we now start a precise account of results obtained in the present contribution. We choose the following form
of the Korteweg--de Vries equation (KdV)
\be\label{KdV}
U_t\ +\ \left(\tfrac12\,U^2\right)_x\ +\ U_{xxx}\ =\ 0
\ee
where $U(t,x)$ is scalar and $t$ and $x$ denote time and space variable. A solution to \eqref{KdV} is called a periodic (uniformly traveling) wave if it has the form 
$U(t,x)=\uU(k\,x+\omega\,t)$ for some profile $\uU$ periodic of period one, some wavenumber $k$ and some time frequency $\omega$. The corresponding phase velocity is then $c=-\omega/k$. We shall focus on the dynamics near a given wave with some fixed wavenumber $\uk$ and frequency $\uom$ and hence write \eqref{KdV} in the corresponding moving frame. Introducing $W$ through $U(t,x)\ =\ W(t,\uk\,x+\uom\,t)$ equation \eqref{KdV} becomes
\be\label{KdV-move}
W_t\ +\uom\,W_x\ +\ \uk\,\left(\tfrac12\,W^2\right)_x\ +\ \uk^3\,W_{xxx}\ =\ 0
\ee
so that $\uU$ is a steady solution of \eqref{KdV-move}. Setting $W=\uU+\tW$ and performing a naive linearization yield $\tW_t-L\tW=0$ where
\be\label{linop}
L\,\tW\ =\ -\uom\,\tW_x\ -\ \uk\,\left(\uU\,\tW\right)_x\ -\ \uk^3\,\tW_{xxx}\,.
\ee

We stress however that even if the linear decay obtained below were faster we would not expect $\tW$ to remain small at least in norms encoding some localization so that it is a priori unclear what is the role of this linearization in the description of the large-time dynamics. To be more specific let us recall that at the nonlinear level analyses of parabolic cases suggest that one should not expect to control $\|W-\uU\|_X$, for some reasonable functional space $X$ but instead to bound
$$
\inf_{\Psi\ \textrm{one-to-one}}\|W\circ\Psi-\uU\|_X\,+\,\|\d_x(\Psi-\Id)\|_X\,.
$$
This encodes preservation of shapes but allows for a synchronization of phases by a sufficiently slow phase shift. That the corresponding notion of stability is indeed the relevant notion for periodic waves has slowly emerged along years. In \cite{JNRZ-conservation} this notion --- coined there as \emph{space-modulated stability} --- has been proved to be sharp for general parabolic systems. Moreover in \cite{JNRZ-conservation} have also been identified what are necessary and sufficient cancellations in the structure of parabolic systems --- called there \emph{phase uncoupling} --- to ensure usual orbital stability\footnote{Here this amounts to controlling
$$
\inf_{\Psi\ \textrm{uniform translation}}\|W\circ\Psi-\uU\|_X\,.
$$} instead of space-modulated stability. Though the parabolic machinery obviously does not apply to \eqref{KdV} it is worth mentioning that \eqref{KdV} does not exhibit such null structures.

To unravel what is left of this at the linearized level, let us mimic the first steps of the natural strategy to prove space-modulated stability. Namely, one pick an initial datum $W_0$ and a couple $(V_0,\psi_0)$ such that
$$
W_0\circ(\Id-\psi_0)\ =\ \uU\ +\ V_0
$$
with $(V_0,\d_x\psi_0)$ sufficiently small and try to prove that there exists a corresponding solution $W$ such that there exists $(V,\psi)$ with $(V,\d_x\psi)$ small and $W\circ(\Id-\psi)\ =\ \uU\ +\ V$. The first key observation is that in terms of the sought $(V,\psi)$ equation \eqref{KdV-move} takes the form
$$
(V+\uU_x\psi)_t\ -\ L\,(V+\uU_x\psi)\ =\ \cN[V,\psi_t,\psi_x]\,.
$$
where $L$ is the operator defined above, and $\cN$ is nonlinear in $(V,\psi_t,\psi_x)$ and their derivatives, and, locally, at least quadratic. In particular neglecting terms expected to be at least quadratically small leaves
$$
(V+\uU_x\psi)_t\ -\ L\,(V+\uU_x\psi)\ =\ 0\,.
$$

As readily observed this more involved point of view also leads to the consideration of the group $(S(t))_{t\in\R}$ generated by $L$. But it also shows that \emph{linear space-modulated stability} should be defined by requiring a control of 
\be\label{sm_norm}
N_X(W)\ =\ \inf_{\substack{W=V+\uU_x\psi}}
\|V\|_{X}\ +\ \|\psi_x\|_{X}
\ee
and not of $\|W\|_X$. Therefore the following result should be interpreted as bounded linear stability of periodic waves of \eqref{KdV} in a space-modulated sense.

\begin{theorem}{\emph{Bounded linear stability.}}\label{th:bounded}
For any $s\in\N$, there exists $C$ such that for any $W_0$ such that $N_{H^s(\R)}(W_0)<\infty$ and any time $t\in\R$
$$
N_{H^s(\R)}(S(t)\,W_0)\ \leq\ C\, N_{H^s(\R)}(W_0)\,.
$$
\end{theorem}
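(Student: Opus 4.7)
The plan is to use Bloch--Floquet decomposition to reduce the question to uniform bounds on a family of fiber operators $L_\xi$ defined by $L_\xi f = e^{-i\xi x}\,L(e^{i\xi x}f)$ and acting on one-periodic functions, with Floquet parameter $\xi$ in the Brillouin zone $[-\pi,\pi]$. Writing
$$
(S(t)W_0)(x) \;=\; \int_{-\pi}^{\pi} e^{i\xi x}\,\bigl(e^{tL_\xi}\check W_0(\xi,\cdot)\bigr)(x)\,d\xi
$$
and using the Plancherel-type identity for the Bloch transform, one identifies the $H^s(\R)$ norm of $S(t)W_0$ with a mixed $L^2_\xi H^s_x$ norm on the Bloch side, so that the task becomes to estimate the fiber semigroups $e^{tL_\xi}$ uniformly in $\xi$ and $t$, after absorbing a distinguished critical mode into $\psi$.

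I would then invoke the spectral information gathered via integrability (Bottman--Deconinck, together with Assumption~\ref{A_intro-bis}) to assert: the $L^2$ spectrum of each $L_\xi$ is purely imaginary; for $\xi$ away from zero, $L_\xi$ admits a complete spectral decomposition with bounded spectral projectors, so that $\|e^{tL_\xi}\|$ is uniformly bounded; near $\xi=0$, three smooth purely imaginary curves $\lambda_j(\xi)$ branch from $0$, and at $\xi=0$ there is a nontrivial Jordan block whose kernel is spanned by $\uU_x$, reflecting translational invariance and infinitesimal variations of the wave parameters. It is precisely this Jordan structure that produces linear-in-$t$ contributions of $e^{tL_\xi}$ along $\uU_x$ in a neighborhood of $\xi=0$, obstructing plain $H^s(\R)$ boundedness of $S(t)$ and making space modulation unavoidable.

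To absorb the Jordan growth I would select a smooth branch of right Bloch eigenfunctions $q_0(\xi,\cdot)$ of $L_\xi$ along the critical curve, normalized so that $q_0(0,\cdot)=\uU_x$ and $\lambda_0(0)=0$, together with the corresponding dual left eigenfunction $\tilde q_0(\xi,\cdot)$. On low Bloch frequencies, cut off by a smooth $\chi$ supported near $\xi=0$, I would set
$$
\check\psi(t,\xi,\cdot) \;=\; \chi(\xi)\,e^{t\lambda_0(\xi)}\,\langle \tilde q_0(\xi,\cdot),\,\check W_0(\xi,\cdot)\rangle,
$$
so that $\uU_x\,\psi(t)$ captures the entirety of the Jordan-growing contribution inside $S(t)W_0$, and then define $V(t)=S(t)W_0-\uU_x\,\psi(t)$. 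The key mechanism is that the true critical mode at parameter $\xi$ is $q_0(\xi,\cdot)$, whereas we have projected onto the direction $\uU_x$; the correction $q_0(\xi,\cdot)-\uU_x$ therefore vanishes at $\xi=0$ and is $O(\xi)$, a hidden factor that produces exactly the spatial derivative converting an a priori unbounded $\psi$ into an $H^s(\R)$-bounded $\psi_x$, while $V$ remains uniformly bounded in $H^s(\R)$. A Parseval identity on the Bloch side and smoothness of $\lambda_0$, $q_0$, $\tilde q_0$ and the spectral projectors in $\xi$ then yield $\|V(t)\|_{H^s(\R)}+\|\psi_x(t)\|_{H^s(\R)}\lesssim \|\check W_0\|_{L^2_\xi H^s_x}$, a quantity that, after infimizing over admissible decompositions $W_0=V_0+\uU_x\psi_0$, is comparable to $N_{H^s(\R)}(W_0)$.

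The main obstacle is the careful, uniform-in-$\xi$ Jordan resolution at $\xi=0$: one must extract sharp factors of $\xi$ from both $q_0(\xi,\cdot)-\uU_x$ and the dual pairing to absorb exactly the linearly growing piece, so that the remainder is globally bounded in time rather than merely polynomially small, and one must ensure that this bookkeeping is compatible with the three-dimensional crossing of critical curves. A subsidiary difficulty is the $H^s(\R)$ control at high Bloch frequencies, where non-normality of $L_\xi$ prevents any global diagonalization; this is handled by combining uniformly bounded spectral projectors away from $\xi=0$ with the Hamiltonian skew structure of $L$, which grants both the purely imaginary character of $\sigma(L_\xi)$ and the completeness of the associated spectral decompositions used to close the $H^s$ estimate.
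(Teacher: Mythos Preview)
Your outline captures the correct architecture --- Bloch decomposition, separation of the three critical modes near $\xi=0$, absorption of the Jordan block into $\psi$ via the $O(\xi)$ vanishing of $q_0(\xi,\cdot)-\uU_x$ --- and this is indeed the skeleton of the paper's proof. However, two genuine gaps remain.

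First, the sentence ``for $\xi$ away from zero, $L_\xi$ admits a complete spectral decomposition with bounded spectral projectors, so that $\|e^{tL_\xi}\|$ is uniformly bounded'' is precisely the hard content of the theorem, not an input. The Hamiltonian structure gives you purely imaginary spectrum, and Keldysh theory gives completeness, but neither yields that the eigenfunctions form a \emph{Riesz} basis uniformly in $\xi$, which is what ``bounded spectral projectors'' means and what is needed for $\|e^{tL_\xi}\|$ to be bounded. The paper proves this by exploiting integrability to obtain the explicit near-orthonormality estimate $\tphie{j}(\xi,\cdot)=\phie{j}(\xi,\cdot)+O(|j|^{-2})$ as $|j|\to\infty$, uniformly in $\xi$; this square-summable defect from an orthonormal system is exactly what closes the $L^2$ estimate. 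For $s\geq 1$ one needs in addition the structured expansions $\tphie{j}(\xi,x)=\sum_{\ell=0}^s a_\ell(j,\xi)e^{2\pi i jx}r_\ell(x)+O(|j|^{-s-1})$ with $a_\ell=O(|j|^{-\ell})$, so that pairing against an $H^s$ function gains the right powers of $j$. None of this follows from abstract skew-Hamiltonian structure.

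Second, your final bound $\|V(t)\|_{H^s}+\|\psi_x(t)\|_{H^s}\lesssim\|\check W_0\|_{L^2_\xi H^s_x}=\|W_0\|_{H^s}$ is in terms of the wrong quantity: $\|W_0\|_{H^s}$ is not comparable to $N_{H^s}(W_0)$ and may be infinite when $N_{H^s}(W_0)$ is finite (take $W_0=\uU_x\psi_0$ with $\psi_0$ bounded but non-decaying). The paper handles this by proving a two-sided equivalence between $N_{H^s}$ and a spectral norm $\|\cdot\|_{X^s}$ built from the eigen-coordinates; the direction $\|\uU_x\psi_0\|_{X^s}\lesssim\|\partial_x\psi_0\|_{H^s}$ requires the cancellation $\langle\tphi(0,\cdot),\uU_x\rangle=0$ together with a uniform-in-$j$ control of $\xi^{-1}(\tphie{j}(\xi,\cdot)-\tphie{j}(0,\cdot))$, again obtained from explicit eigenfunction asymptotics. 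You have addressed only the output side of the decomposition, not the input side. (Incidentally, Assumption~\ref{A_intro-bis} plays no role in this theorem.)
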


We emphasize that the result is non trivial even for $s=0$ since $L$ is not a normal operator\footnote{We shall observe for instance that zero is a Floquet eigenvalue of $L$ associated with a non trivial Jordan chain.} and $(S(t))_{t\in\R}$ is not expected to be a group of unitary transformations on any subspace of $L^2(\R)$. Since coefficients of $L$ are not constant one should also notice that the proof requires an $s$-by-$s$ analysis\footnote{In contrast with proofs for constant coefficients operators.}. Indeed, in the reverse direction, there is now a large literature devoted to the analysis of \emph{growth} rates of higher-order Sobolev norms for some classes of evolutions that are unitary on $L^2$ --- for instance those generated by linear Schr\"odinger equations including a (time-dependent) potential. Observe also that whereas the Korteweg-de Vries evolution supports an infinite number of conservations laws providing in other contexts a uniform control of Sobolev norms of solutions the kind of initial data considered here does not seem compatible with any form of integration of those conservation laws that would provide conservations of useful functionals so that they do not play any role in our analysis.

\subsection{Asymptotic stability}

We now turn to asymptotic linear stability. Since our focus is mostly on methodology we first recall some facts that are well-known to experts. Note first that the system at hand possesses classical real and Hamiltonian symmetries so that one already knows without any further knowledge that (marginally) spectrally stable\footnote{In the weak sense that the spectrum of the generator of the linearized evolution does not intersect the open right-half plane.} waves come with $L^2$-spectrum that lies on the imaginary axis. For the operator $L$ under study this may be extended to all Sobolev spaces by a rather general functional-analytic argument\footnote{The main nontrivial step is to notice that for operators with compact resolvents, as the Bloch symbols $L_\xi$ introduced in Section~\ref{s:Bloch}, this independence does hold. See \cite[Theorem~4.2.15]{Davies}.} showing that the $W^{s,p}$-spectrum does not depend on $(s,p)$. In particular for none of the waves under consideration one expects exponential decay of $(S(t))_{t\in\R}$, or of any part of it, in $\cB(W^{s,p}(\R))$-norm. But once exponential decay has been ruled out corollaries of the Datko-Pazy theorem \cite[Theorem~3.1.5 \& Corollary 3.1.6]{vanNeerven} preclude any reasonable form of decay in $\cB(W^{s,p}(\R))$-norm for $(S(t))_{t\in\R}$.

In short, in the former paragraph we have recalled why for situations similar to those dealt with here, to exhibit decay one needs to leave the semi-group framework where initial data and solutions at later times are estimated in same norms. Specifically, here, the mechanism expected to yield some form of return to equilibrium is dispersive decay, in particular the trade-off is localization of the data against uniform decay of the solution. Therefore one aims at proving that the solution may be decomposed into a continuum of elementary blocks traveling with distinct velocities and that this effectively results into spreading and uniform decay\footnote{Those two are actually strongly connected by some form of preservation of $L^2$-norms.} of the solution. As expounded in Section~\ref{s:preparation}, in the periodic context the existence of a continuous decomposition follows from the Bloch-wave decomposition\footnote{See Subsection~\ref{s:Bloch} for definitions. At first reading, to follow the rest of the introduction it is essentially sufficient to know that there is \emph{some} continuous decomposition parametrized by $\xi$ and that phenomena that are slow up to periodic oscillations correspond to $|\xi|\ll 1$.}, where each block is parametrized by a Floquet exponent $\xi$, combined with a spectral decomposition of Bloch symbol $L_\xi$ that provides the action of $L$ on the $\xi$-part of the Floquet decomposition. Then we still need to know that velocities corresponding to the decomposition vary in a sufficiently non-degenerate way to yield dispersive spreading. This is the content of the following Assumption.
\be
\label{A_intro}
\begin{array}{l}
\textrm{At no point of spectral curves the second-order and}\\
\textrm{third-order derivatives with respect to Floquet exponents}\\ 
\textrm{vanish simultaneously.}
\end{array}\tag{A$_0$}
\ee
We give a more precise account of Assumption~\ref{A_intro} in Section~\ref{s:A}. Despite the relatively explicit description of the spectrum of $L$, the author has not been able to prove that this assumption does hold for any cnoidal wave. One may indeed prove in a rather straight-forward way that this condition holds in distinguished asymptotic regimes, that is, for large or for small eigenvalues. But out of these limits the fact that the explicit description does not provide directly a parametrization of the spectrum in terms of the Floquet exponent but rather a parametrization of both Floquet exponent and spectral parameter in terms of a third auxiliary variable, the spectral Lax parameter, leads any attempt to derive a closed-form for those derivatives to cumbersome expressions with signs not readily apparent. However based on easy well-conditioned numerics one may plot corresponding graphs. Experiments of the author --- see Appendix~\ref{s:A_numerics} --- based on eye inspection of those graphs clearly indicate that condition~\ref{A_intro} is always satisfied. 

\begin{theorem}{\emph{Asymptotic linear stability.}}\label{th:asymptotic}
For any cnoidal wave such that condition~\ref{A_intro} holds, there exists $C$ such that for any $W_0$ such that $N_{L^1(\R)}(W_0)<\infty$ and any time $t\in\R$ such that\footnote{One may remove the restriction $|t|\geq1$ under the stronger form of condition~\ref{A_intro} described below, see condition~\ref{A_intro-bis}. The weaker form considered here would instead provide $C\,\max(\{|t|^{-1/2},|t|^{-1/3}\}) N_{L^1(\R)}(W_0)$ as a global bound.} $|t|\geq1$
$$
N_{L^\infty(\R)}(S(t)\,W_0)\ \leq\ C\,|t|^{-1/3} N_{L^1(\R)}(W_0)\,.
$$
In particular for any such wave, there exists $C$ such that for any time $t\in\R$ and any $W_0$ such that $N_{L^1(\R)\cap H^1(\R)}(W_0)<\infty$
$$
N_{L^\infty(\R)}(S(t)\,W_0)\ \leq\ C\,(1+|t|)^{-1/3} N_{L^1(\R)\cap H^1(\R)}(W_0)\,.
$$
\end{theorem}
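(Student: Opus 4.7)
The plan is to combine a Bloch-wave decomposition of $S(t)W_0$ with oscillatory-integral estimates of Van der Corput type along the spectral curves of $L$. In the moving frame, in which $\uU$ is periodic of period $1$, I would write
\[
(S(t)W_0)(x)\ =\ \int_{-\pi}^{\pi} e^{i\xi x}\bigl(e^{L_\xi t}\check W_0(\xi,\cdot)\bigr)(x)\,d\xi\,,
\]
where $\check W_0(\xi,\cdot)$ is the Bloch coefficient of $W_0$ at Floquet exponent $\xi$; a Hausdorff--Young-type inequality supplies $\sup_\xi\|\check W_0(\xi,\cdot)\|_{L^\infty}\lesssim\|W_0\|_{L^1}$, which is how the $L^1$ norm on the right enters. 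Marginal spectral stability together with the real Hamiltonian structure forces the spectrum of each Bloch symbol $L_\xi$ to lie on the imaginary axis, so away from finitely many collision values of $\xi$ I would resolve $e^{L_\xi t}=\sum_j e^{i\omega_j(\xi)t}\Pi_j(\xi)$ and thereby reduce the task, for each spectral branch $j$, to bounding an oscillatory integral with phase $\Phi_j(\xi;x,t)=\xi x+t\omega_j(\xi)$ uniformly in $x$.

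Away from a fixed neighborhood of $\xi=0$ the projectors $\Pi_j(\xi)$ are smooth and uniformly bounded. Since $\Phi_j''=t\,\omega_j''$ and $\Phi_j'''=t\,\omega_j'''$, Assumption~\ref{A_intro} guarantees that at every point of every spectral curve at least one of $|\omega_j''|$, $|\omega_j'''|$ is bounded below, so a Van der Corput argument localized around stationary points of $\Phi_j$ yields a bound of order $|t|^{-1/3}$ uniformly in $x$ for $|t|\geq 1$. The rate $|t|^{-1/3}$ is the slower of the two rates $|t|^{-1/2}$ (nondegenerate stationary phase) and $|t|^{-1/3}$ (first-order degenerate stationary phase) that the weak form of~\ref{A_intro} permits. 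At the spectral collision values of $\xi$, individual projectors are singular but the group projection onto the colliding eigenspace remains smooth, so only a minor adaptation of the same argument is needed.

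The delicate region is the neighborhood of $\xi=0$, where $L_0$ has a nontrivial Jordan block at $0$ whose generalized eigenspace contains $\uU_x$; consequently the three spectral projectors $\Pi_j(\xi)$ of the curves emanating from $0$ blow up as $\xi\to 0$. This singularity is precisely what the space-modulated norm $N_{L^\infty}$ is designed to absorb: I would choose the modulation $\psi(t,\cdot)$ by assembling a Bloch integral localized near $\xi=0$ that extracts exactly the singular part of the Jordan projector onto the $\uU_x$-direction, so that writing $S(t)W_0=V(t)+\uU_x\psi(t)$ leaves only smooth, $\xi=0$-regular amplitudes in the oscillatory integrals representing $V(t)$ and $\psi_x(t)$. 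The Taylor expansions of the three small phases $\omega_j(\xi)$ at $\xi=0$, whose leading coefficients are the characteristic speeds of the formal averaged/modulation system, together with~\ref{A_intro} then yield the same $|t|^{-1/3}$ Van der Corput bound for this low-frequency contribution.

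The global bound of the second assertion follows by combining the first assertion for $|t|\geq 1$ with Theorem~\ref{th:bounded} applied at $s=1$ for $|t|\leq 1$, exploiting the one-dimensional Sobolev embedding $H^1(\R)\hookrightarrow L^\infty(\R)$ at the level of both $V$ and $\psi_x$ in a near-optimal decomposition defining $N_{H^1}$. I expect the main obstacle to lie in producing a single family of oscillatory-integral estimates robust simultaneously against the non-normality of $L_\xi$ with its non-orthogonal $\xi$-dependent projectors, against the singular behavior of those projectors at $\xi=0$ that must be cancelled by a careful choice of $\psi$, and against the weak form of~\ref{A_intro} which merely precludes simultaneous vanishing of $\omega_j''$ and $\omega_j'''$; this is presumably what motivates the technical estimates on oscillatory integrals announced in the abstract.
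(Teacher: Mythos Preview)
Your overall strategy matches the paper's: Bloch decomposition, spectral resolution of $e^{L_\xi t}$, van der Corput along spectral curves under condition~\ref{A_intro}, extraction of the $\uU_x\psi$ piece to regularize the $\xi=0$ Jordan-block singularity, and Theorem~\ref{th:bounded} with Sobolev embedding for $|t|\leq 1$. Two points deserve correction or sharpening.

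First, a genuine gap on the \emph{input} side: you write that Hausdorff--Young gives $\sup_\xi\|\check W_0(\xi,\cdot)\|_{L^\infty}\lesssim\|W_0\|_{L^1}$, but $N_{L^1}(W_0)<\infty$ does \emph{not} imply $W_0\in L^1$; only $V_0$ and $\partial_x\psi_0$ lie in $L^1$ for a near-optimal decomposition $W_0=V_0+\uU_x\psi_0$, while $\psi_0$ itself need only be bounded. The paper therefore treats the two pieces separately via distinct Green kernels: the $K$-kernels act on $V_0$, while separate $G$-kernels act on $\partial_x\psi_0$, the latter requiring the cancellation $\langle\tilde\phi_j(0,\cdot);\uU_x\rangle=0$ (from bi-orthogonality at $\xi=0$) to trade $\psi_0$ for $\partial_x\psi_0$. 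In short, the $\xi=0$ desingularization must be performed on both input and output sides, not just in the choice of $\psi(t,\cdot)$.

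Second, a technical point you pass over: your branch-by-branch van der Corput treatment must still be summed over the infinitely many ``line'' branches $j\in\Z$. The paper avoids this by \emph{gluing} all line branches into a single smooth spectral curve $\Lambda_{\rm e}$ parametrized by an extended Floquet variable $\xi\in\R$ (and similarly the loop branches over $(-2\pi,2\pi)$), then applying van der Corput once to each of the resulting finitely many kernels $K_{\rm e},K_{\rm p},K_{{\rm o},j},G_{\rm e},G_{\rm p},G_{{\rm o},j},k_{{\rm o},j},g_{{\rm o},j}$ and bounding them pointwise; the $L^1\to L^\infty$ estimate is then immediate. The large-eigenvalue expansions of Section~\ref{s:high-freq} supply the regularity and decay of amplitudes $\Phi_{\rm e},\widetilde\Phi_{\rm e}$ on $\R$ needed for this step. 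Your scheme can be made to work by invoking $p=1$ van der Corput for large $|j|$ (phase derivative $\sim j^2$ yields summable $j^{-2}$ bounds), but the paper's glued-kernel representation is cleaner.
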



As already alluded to above, establishing decay here involves the proof of global-in-time dispersive estimates for operators with variable coefficients --- actually quite far from having constant coefficients. Needless to say that such form of results is quite unusual in the literature. The only related results the author is aware of are due to Cuccagna \cite{Cuccagna_dispersion_periodic-potential,Cuccagna_KG_periodic-1D} and Prill \cite{Prill_dispersive_KG_periodic-potential}, and have been subsequently used to prove nonlinear results with sufficiently nice nonlinearities \cite{Cuccagna-Visciglia_scattering_small-energy_periodic-potential,Prill_asymptotic-stability_vacuum_1D_NL-KG}. A significant difference is that periodicity in their cases stem from the presence of a periodic potential, and that they linearize about the zero solution hence receive a self-adjoint operator, which extends the range of techniques available.

\subsection{Slow modulation behavior}

Actually numerical experiments suggests that a stronger version of condition~\ref{A_intro} holds, namely 
\be
\label{A_intro-bis}
\begin{array}{l}
\textrm{At no nonzero point of spectral curves the second-order}\\
\textrm{derivative with respect to Floquet exponents vanish}\\ 
\textrm{and the third-order derivatives do not vanish at zero.}
\end{array}\tag{A}
\ee
The spectral point $0$ is associated with Floquet exponent $0$. Thus through classical considerations on oscillatory integrals condition~\ref{A_intro-bis} leads to the fact that critical decay rate $|t|^{-1/3}$ corresponds to the evolution of the spectral part of the initial data corresponding to small spectrum and small Floquet exponents, the remaining part of the solution decaying faster, at rate $|t|^{-1/2}$. It may then be expected that one could accurately describe the long-time evolution within a two-scale \emph{ansatz}, essentially a periodic oscillation when looking at a fixed bounded domain but whose characteristics evolve in time and space on larger scales. Moreover the well-known fact that eigen modes corresponding to the spectral point $0$ are given in terms of variations at $\uU$ along the manifold of periodic traveling wave profile suggests that in a formal \emph{ansatz} the local structure of oscillations could be captured by picking at each spatio-temporal point one neighboring periodic wave and slow evolutions would then result from a slow spatio-temporal motion along the manifold of periodic traveling waves. This is commonly referred to as a slow modulation behavior.

Our following results prove that this intuition is indeed correct at the linearized level we consider here and give a precise account of the large-time asymptotic behavior. To state it we first choose a parametrization of periodic traveling waves. Many choices are available in the present case, some of them being very explicit, others diagonalizing the first-order system formally driving at main order the slow evolution... We choose one of them that is not very explicit but both simple and known to be available essentially near any non degenerate wave of a system of partial differential equations. We refer the reader to \cite{Whitham,Kamchatnov,Benzoni-Mietka-Rodrigues_2} for a look at other possible parametrizations for the Korteweg--de Vries equation and to \cite[Appendix~B.2]{Benzoni-Noble-Rodrigues} or \cite[Section~2.1]{R} for a proof that our choice is still available in a broader context. In the context of the Korteweg--de Vries equation simple quadrature combined with reduction by symmetry shows that periodic traveling waves are smoothly given as $U(t,x)=\Up{k,M,P}(k\,x\,+\,\omega(k,M,P)\,t+\phi)$ with
$$
\Up{k,M,P}\textrm{ of period }1,\qquad
\int_0^1 \Up{k,M,P}\,=\,M,\qquad
\int_0^1 \tfrac12(\Up{k,M,P})^2\,=\,P\,.
$$
Correspondingly the phase velocity is given as $c(k,M,P)=-\omega(k,M,P)/k$. Note in particular that for some average values $\uM$, $\uP$ and some phase shift $\upsi$ we have $\uU=\Up{\uk,\uM,\uP}(\,\cdot\,+\upsi)$, $\uom=\omega(\uk,\uM,\uP)$ and $\uc=c(\uk,\uM,\uP)$. By translating profiles with $\upsi$ we may actually ensure $\upsi=0$. For the sake of writing convenience we shall do so from now on. In the following we shall also denote by $\bdD\Up{}$ the differential with respect to parameters $(k,M,P)$. Now the validation at our linearized level of the slow modulation scenario takes the following simple form.

\begin{theorem}{\emph{Slow modulation behavior.}}\label{th:behavior}
Assume that the cnoidal wave of parameters $(\uk,\uM,\uP)$ and phase shift zero is such that condition~\ref{A_intro-bis} holds. There exists $C$ such that for any $W_0$ such that $N_{L^1(\R)}(W_0)<\infty$ there exist 
local parameters $\psi$, $M$ and $P$ such that for any time $t\in\R$ with\footnote{One may relax the constraint $t\geq1$ and replace $|t|^{-1/2}$ with $\min(\{|t|^{-1/2},|t|^{-1/3}\})$. A similar remark applies to other similar restrictions below.} $|t|\geq1$
$$
\begin{array}{rcl}
\ds
\big\|S(t)\,(W_0)&-&\ds
\psi(t,\cdot)\,\Up{\uk,\uM,\uP}_x-\,\bdD\Up{\uk,\uM,\uP}\cdot(\uk\d_x\psi(t,\cdot),M(t,\cdot),P(t,\cdot))
\big\|_{L^\infty(\R)}\\[0.5em]
&\leq&C\,|t|^{-1/2}\,N_{L^1(\R)}(W_0)
\end{array}
$$
where $\psi(t,\cdot)$ is centered, $\psi(t,\cdot)$, $M(t,\cdot)$ and $P(t,\cdot)$ are low-frequency and 
$$
\|(\uk\d_x\psi(t,\cdot),M(t,\cdot),P(t,\cdot))\|_{L^\infty(\R)}
\,\leq\,C\,|t|^{-1/3}\,N_{L^1(\R)}(W_0)\,.
$$
\end{theorem}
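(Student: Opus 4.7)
The plan is to Bloch-decompose $S(t)$, isolate the contribution of the three-dimensional generalized kernel of $L_0$ at Floquet exponent zero (which carries the slowest Airy modes), match it with the slow-modulation ansatz via a Whitham-adapted basis, and bound the remainder by $|t|^{-1/2}$ through refined oscillatory integral estimates.

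First, using the Bloch representation of Section~\ref{s:Bloch}, I would write
\[
S(t)\,W_0\ =\ \frac{1}{2\pi}\int_{-\pi}^{\pi}e^{\ii\xi\,\cdot}\,\bigl(e^{tL_\xi}\check{W}_0(\xi,\cdot)\bigr)(\cdot)\,d\xi
\]
and introduce a smooth cutoff $\chi$ of a small neighborhood $(-\xi_0,\xi_0)$ of zero. Under condition~\ref{A_intro-bis}, on $\{|\xi|\geq\xi_0\}$ every spectral curve has non-vanishing second derivative, so the $(1-\chi)$-contribution obeys the stronger stationary-phase bound $O(|t|^{-1/2})\,N_{L^1(\R)}(W_0)$ already produced in the proof of Theorem~\ref{th:asymptotic}. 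For $|\xi|<\xi_0$, analytic perturbation theory yields a smooth rank-$3$ spectral projection $\Pi(\xi)$ onto the perturbation of the generalized kernel of $L_0$; the complementary spectrum stays uniformly bounded away from $0$, so the $(I-\Pi(\xi))$-contribution also yields $O(|t|^{-1/2})\,N_{L^1(\R)}(W_0)$.

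Next, I would pick an analytic basis $(\bfq_1(\xi),\bfq_2(\xi),\bfq_3(\xi))$ of $\Range\,\Pi(\xi)$ with
\[
\bfq_1(0)=\Up{\uk,\uM,\uP}_x,\qquad \bfq_2(0)=\d_M\Up{\uk,\uM,\uP},\qquad \bfq_3(0)=\d_P\Up{\uk,\uM,\uP},
\]
arranged so that $\d_\xi\bfq_1(0)=\ii\uk\,\d_k\Up{\uk,\uM,\uP}$ (the Whitham prescription identifying the local wavenumber with the spatial derivative of the phase; note that $\d_k\Up{}$ is not periodic, hence does not live in the $\xi=0$ fiber and must appear through the $\xi$-variation of the basis). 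Let $\tilde a_j(t,\xi)$ denote the scalar coefficients of $e^{tL_\xi}\Pi(\xi)\check W_0(\xi,\cdot)$ in this basis, and define $\psi$, $M$, $P$ as the inverse Bloch transforms of $\chi(\xi)\tilde a_1(t,\xi)$, $\chi(\xi)\tilde a_2(t,\xi)$, $\chi(\xi)\tilde a_3(t,\xi)$; these are low-frequency by construction, and centering $\psi$ absorbs a harmless constant. Under condition~\ref{A_intro-bis} on the third derivatives of critical spectral curves at the origin, the Airy-type oscillatory integral estimate already used for Theorem~\ref{th:asymptotic} yields the $L^\infty$ bound $C\,|t|^{-1/3}\,N_{L^1(\R)}(W_0)$ on $(\uk\d_x\psi,M,P)$.

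The difference between $S(t)W_0$ and the ansatz then splits into the two $|t|^{-1/2}$-controlled pieces above plus the ``matching residual''
\[
\sum_{j=1}^{3}\int_{-\pi}^{\pi}\chi(\xi)\,e^{\ii\xi\,\cdot}\,\tilde a_j(t,\xi)\,\cR_j(\xi,\cdot)\,d\xi,\qquad \cR_j(\xi,x)\ :=\ \bfq_j(\xi,x)-\bfq_j(0,x)-\xi\,\d_\xi\bfq_1(0,x)\,\delta_{j,1},
\]
whose amplitude vanishes at $\xi=0$. Against the Airy-type critical phase $t\lambda_j(\xi)\sim t(\ii a_j\xi+\ii b_j\xi^3)$, each extra power of $\xi$ in the amplitude upgrades the critical $|t|^{-1/3}$ decay to $|t|^{-2/3}$ at the degenerate stationary point $\xi=0$ (by rescaling $\xi=|t|^{-1/3}\eta$), while non-degenerate stationary points away from zero keep the $|t|^{-1/2}$ bound; taking the supremum in $x$ yields $|t|^{-1/2}$. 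The main obstacle is precisely this step: producing the uniform-in-$x$ $|t|^{-1/2}$ remainder bound requires the refined oscillatory integral estimates announced in the abstract as of independent interest, quantifying how vanishing of the amplitude at the degenerate critical point of an Airy phase improves the decay rate uniformly over the spatial parameter entering the phase. A secondary difficulty is the analytic construction of the basis of $\Range\,\Pi(\xi)$ so that the Whitham identification appears cleanly, with the $\d_k\Up{}$ direction produced by $\d_\xi\bfq_1(0)$ rather than as a direct element of $\Range\,\Pi(0)$.
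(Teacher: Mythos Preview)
Your approach is essentially the same as the paper's: isolate the rank-three spectral projection near $\xi=0$, identify its range at $\xi=0$ with the Whitham tangent space (this is Propositions~\ref{prop:slow-spec-expansion} and~\ref{prop:spec-modulation}), and bound the matching residual --- whose amplitude vanishes linearly at the degenerate Airy point $\xi=0$ --- via the refined van der Corput estimate Lemma~\ref{l:refined-vdC} with $p=3$, $\alpha=1$, which gives exactly the uniform-in-$x$ rate $|t|^{-1/2}$ you anticipate. The only cosmetic difference is that the paper diagonalizes first in the eigenbasis $\phi_j(\xi,\cdot)$ and then expands each eigenvector in the $q$-basis, so each oscillatory integral carries a single phase $\lambda_j(\xi)t$; your coefficients $\tilde a_j$ instead mix the three exponentials, so you would need one further decomposition into eigenmodes before the lemma applies.
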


We refer the reader to Section~\ref{s:slow-reduction} for precise definitions of the intuitive notions of being \emph{centered} or \emph{low-frequency}. A few other comments are in order.
\begin{enumerate}
\item The description implicitly contains the relation $k(t,\cdot)=\uk\d_x\phi(t,\cdot)$  between local wavenumber $k(t,\cdot)$ and local phase shift $\phi(t,\cdot)$ that is familiar in spatio-temporal modulation theories.
\item In the introduction we have chosen to state our results in a rather concise and  abstract form. Yet our proof shows that 
$(\psi,M,P)$ may be chosen as explicit linear functions of $W_0$. 
\item The fact that $\psi(t,\cdot)$ is centered for any $t$ expresses that the time dynamics can not create any global-in-space phase-shift. This may seem in strong contrast with what happens near traveling waves with localized variations, such as solitary waves or fronts, where the main effect of perturbations is usually captured by a global-in-space phase-shift that evolves in time. The heuristics is as follows. In \emph{any} case perturbations affects solutions in a nearly local way. However for localized unimodal waves since at infinity the solution is approximately translation invariant and a single local shift effectively occurs the main effect may be described by a global-in-space phase-shift. 
\item Note carefully that $\psi$ does not decay so that even when $W_0$ is small one can not replace the first quantity estimated with a more nonlinear form
$$
\uU(x)+S(t)(W_0)(x)-\Up{\uk+\uk\psi_x(t,x),\uM+M(t,x),\uP+P(t,x)}(x+\psi(t,x))\,.
$$
This is consistent with the fact that the above form is not the right formula to undo the linearization expounded at the introduction of space-modulated norms. One correct formulation is that $S(t)(W_0)(x)=\psi(t,x)\,\uU_x(x)+V(t,x)$ and, with 
$$\Psi(t,\cdot)=(\Id_\R-\psi(t,\cdot))^{-1},\qquad W(t,x)=\uU(\Psi(t,x))+V(t,\Psi(t,x))$$ 
and
$$(\kappa,\cM,\cP)(t,x)=(\uk\d_x\Psi(t,x),\uM+M(t,\Psi(t,x)),\uP+P(t,\Psi(t,x)))\,,$$
the quantity
$$
W(t,x)-\Up{\kappa(t,x),\cM(t,x),\cP(t,x)}(\Psi(t,x))
$$
is bounded by a constant multiple of $|t|^{-1/2}\,N_{L^1(\R)}(W_0)$ provided that $N_{L^1(\R)}(W_0)$ is sufficiently small.
\end{enumerate}

\subsection{Averaged dynamics}

The last thing we would like to do is to provide a large-time description of the dynamics of $(\psi,M,P)$ introduced in the foregoing theorem. In other words we would like to identify some equivalent averaged dynamics. Relevant effective equations may indeed be obtained as a correction to the famous first-order Whitham system, linearized about the constant parameters $\ua=(\uk,\uM,\uP)$ of our reference background wave
\be\label{e:Wlin-3rd}
\begin{pmatrix}k\\M\\P\end{pmatrix}_t
+\uom\begin{pmatrix}k\\M\\P\end{pmatrix}_x
+\uk\begin{pmatrix}\dd\omega(\ua)\cdot(k,M,P)_x\\P_x\\\dd F(\ua)\cdot(k,M,P)_x\end{pmatrix}
\ =\ \uk^3\,D(\ua)\,\begin{pmatrix}k\\M\\P\end{pmatrix}_{xxx}\,
\ee
where $D(\ua)$ is a $3\times3$-matrix and
$$
F(k,M,P)
\,=\,\int_0^1\left(\tfrac13(\Up{(k,M,P)})^3-\tfrac32k^2((\Up{(k,M,P)})')^2\right)
$$
is the averaged of the flux associated with conservation law for Benjamin's impulse $\tfrac12 U^2$. A higher-order correction to the classical Whitham theory is required to describe accurately large-time behavior. Indeed one needs to reach a level of description accounting for dispersive effects, hence here the third order is the lowest relevant order of description. 

Such a suitable system could actually be derived by arguing on formal grounds and those formal derivations may be thought as geometric optic expansions of WKB type, following from a higher-order version of the two-timing method of Whitham in the spirit of \cite{Noble-Rodrigues}. However to keep the analysis as tight as possible we here follow a different process expounded below.

The gain on going from the full scalar linear equation $W_t-LW$ to system~\eqref{e:Wlin-3rd} is of averaging nature as $L$ has periodic coefficients while linear averaged systems are constant-coefficients systems and as such are expected to be much easier to understand directly. Yet as in classical homogenization problems the coefficients of reduced systems require averaging quantities depending on solutions of cell-problems. In particular $D(\ua)$ has a quite daunting explicit form. 
It should be noted however that on one hand knowing that such reduction exists disregarding the specific form of the system already yields a wealth of information and that on the other hand if needed the coefficients involved may be computed numerically in a relatively simple way.

In any case the above-mentioned formal arguments do not yield any insight on effective initial data. Besides putting on sound mathematical grounds those formal arguments the main achievement of the following result is to provide equivalent initial data for averaged systems.

\begin{theorem}{\emph{Averaged dynamics, third order.}}\label{th:3rd}
Assume that the cnoidal wave of parameters $(\uk,\uM,\uP)$ and phase shift zero is such that condition~\ref{A_intro-bis} holds. There exists $C$ such that for any $W_0$ such that $N_{L^1(\R)}(W_0)<\infty$ there exists $\psi_0$ centered and low-frequency such that with $V_0=W_0-\uU_x\psi_0$ 
$$
\|V_0\|_{L^1(\R)}\,+\,\|\d_x\psi_0\|_{L^1(\R)}
\,\leq\,2\,N_{L^1(\R)}(W_0)
$$
and for any such $\psi_0$ the local parameters $(\psi,M,P)$  of Theorem~\ref{th:behavior} may be chosen in such a way that for any time $t\in\R$ with $|t|\geq1$
$$
\big\|(\uk\d_x\psi(t,\cdot),M(t,\cdot),P(t,\cdot))-
\SigW(t)(\uk\d_x\psi_0,V_0,\uU\,V_0)
\big\|_{L^\infty(\R)}
\,\leq\,
C\,|t|^{-2/5}\,N_{L^1(\R)}(W_0)
$$
and 
$$
\|\psi(t,\cdot)-\eD_1\cdot\,(\uk\d_x)^{-1}\SigW(t)(\uk\d_x\psi_0,V_0,\uU\,V_0)\|_{L^\infty(\R)}
\,\leq\,C\,|t|^{-1/5}\,N_{L^1(\R)}(W_0)
$$
where $\SigW$ denotes the solution operator for System~\ref{e:Wlin-3rd}.
\end{theorem}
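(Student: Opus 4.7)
The starting point is the Bloch-wave decomposition of Section~\ref{s:Bloch} together with the slow-modulation reduction of Section~\ref{s:slow-reduction}, which already yield Theorem~\ref{th:behavior}. That reduction projects the low-Floquet-exponent part of $S(t)W_0$ onto the three-dimensional bundle generated at $\xi=0$ by $\uU_x$ and $\bdD\Up{\ua}$, producing $(\psi,M,P)$ as its coordinates. The plan is first to show that at $t=0$ this procedure can equivalently be run from a \emph{centered low-frequency} choice of $\psi_0$, from which $V_0=W_0-\uU_x\psi_0$ automatically inherits the bound $\|V_0\|_{L^1(\R)}+\|\d_x\psi_0\|_{L^1(\R)}\leq 2\,N_{L^1(\R)}(W_0)$. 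The factor $2$ will reflect the fact that a smooth low-frequency cutoff applied to a near-minimizing pair for $N_{L^1(\R)}(W_0)$ preserves centering and $L^1$ bounds up to universal constants.

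The next step is to identify $\SigW$ as a Fourier multiplier on $\R$ read off from the Taylor expansion at $\xi=0$ of the small-$\xi$ spectral data of $L$. Reality and the Hamiltonian structure of \eqref{KdV} force the three eigenvalues crossing $0$ to be purely imaginary with odd Taylor expansions $\lambda_j(\xi)=\iD\,(\alpha_{j,1}\xi+\alpha_{j,3}\xi^3+\alpha_{j,5}\xi^5+\cdots)$, and the associated spectral projectors and eigenvectors admit matching analytic expansions. The cubic truncation, expressed in the basis $(\uU_x,\bdD\Up{\ua})$ of the low-frequency bundle, is a constant-coefficient third-order matrix Fourier multiplier whose symbol coincides with the one of the linearized system~\eqref{e:Wlin-3rd}; this identifies $\SigW$. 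The effective initial data $(\uk\d_x\psi_0,V_0,\uU\,V_0)$ is then dictated by evaluation at $\xi=0$ of the three dual projectors: on a period these compute the averages of $\uk\d_x\psi_0$, $V_0$ and $\uU\,V_0$, which are the linearized local perturbations of wavenumber, mean, and Benjamin impulse associated with the wave parametrization $\Up{k,M,P}$.

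With $\SigW$ so identified, the comparison reduces to an oscillatory-integral bound for the difference between the exact low-$\xi$ evolution and its cubic truncation. Writing
\[
\eD^{t\lambda_j(\xi)}-\eD^{t\lambda_j^{(3)}(\xi)}\ =\ \iD\,t\,(\lambda_j-\lambda_j^{(3)})(\xi)\int_0^1\eD^{t(s\lambda_j+(1-s)\lambda_j^{(3)})(\xi)}\dD s\,,
\]
with $\lambda_j^{(3)}$ the cubic truncation of $\lambda_j$, using $\lambda_j-\lambda_j^{(3)}=\iD\,\xi^5\,r_j(\xi)$ with $r_j$ analytic bounded near $\xi=0$, and splitting the integration at the threshold $|\xi|\simeq|t|^{-1/5}$, one extracts an $O(|t|^{-1/5})$ contribution from $|\xi|\leq|t|^{-1/5}$ by the na\"ive pointwise estimate $|t\,\xi^5|\lesssim 1$, while the contribution from $|\xi|\geq|t|^{-1/5}$ is brought down to the target $O(|t|^{-2/5})$ through the refined oscillatory-integral estimates developed elsewhere in the paper, which exploit the fivefold vanishing of $\xi^5\,r_j(\xi)$ at $\xi=0$ against the Airy-type phase $s\,\lambda_j+(1-s)\,\lambda_j^{(3)}$ uniformly in $s\in[0,1]$ and $x$. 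I expect this last uniform bound to be the main obstacle: the phase becomes degenerate along three distinct Whitham characteristic directions, and $\widehat{W_0}$ is only bounded, so integration by parts must be carefully distributed between the amplitude and the phase. The complementary high-Floquet contributions from both the exact and the Whitham side are handled by the standard non-coalescing stationary-phase $|t|^{-1/2}$ bounds already used in the proofs of Theorems~\ref{th:asymptotic} and~\ref{th:behavior} and are absorbed into the error.

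The final $L^\infty$ bound on $\psi(t,\cdot)-\eD_1\cdot(\uk\d_x)^{-1}\SigW(t)(\cdots)$ then follows by applying the same scheme after one extra spatial antiderivation, which amounts to inserting a factor $(\iD\uk\xi)^{-1}$ under the Bloch integral. At the split $|\xi|\simeq|t|^{-1/5}$ this inverse weight costs exactly $|t|^{1/5}$ and degrades $|t|^{-2/5}$ to $|t|^{-1/5}$; the centering of $\psi(t,\cdot)$ enforced by the construction in Theorem~\ref{th:behavior} is precisely what makes this inverse integration well defined and cancels the zero-frequency singularity of the Bloch multiplier $(\iD\uk\xi)^{-1}$.
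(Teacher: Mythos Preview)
Your outline is close in spirit to the paper's argument but contains a genuine quantitative gap in the oscillatory-integral step, and this gap is fatal for the first estimate.

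You split the integral at $|\xi|\simeq|t|^{-1/5}$ and state that the inner piece contributes $O(|t|^{-1/5})$. That is correct, but the target for $(\uk\d_x\psi,M,P)$ is $O(|t|^{-2/5})$, so this already loses. Moreover, no choice of split point rescues the scheme as written: with a split at $\delta$, the inner region gives $|t|\delta^6$ while the outer region, treated by van der Corput on each exponential \emph{separately} (which is all your $s$-integral reformulation allows, since it places a factor $t\xi^5$ in the amplitude), gives at best $(\delta|t|)^{-1/2}$; optimizing yields only $|t|^{-5/13}$, strictly worse than $|t|^{-2/5}$. The fivefold vanishing of the amplitude in your formulation cannot be exploited through Lemma~\ref{l:refined-vdC}, which requires $\alpha\le 1$.

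What is missing is that the rate $|t|^{-2/5}$ comes from exploiting the cancellation between $\eD^{\iD t\omega}$ and $\eD^{\iD t\omega_0}$ \emph{also on the outer region}. The paper does this via Lemma~\ref{l:phase-vdC} (applied with $p=3$, $q=5$): one tracks the $x$-dependent stationary point $\xi_0$ of $\omega'$ and the nearby stationary point $\xi_1$ of $\omega_0'$ (controlled by Lemma~\ref{l:vanishing}), and the splitting is organized around the size of $|\xi_0-\xi_*|$ relative to powers of $|t|^{-1}$, not at a single fixed scale. The phase bound then follows from the companion Lemma~\ref{l:singular-phase-vdC}, which handles the extra $|\xi|^{-1}$ weight directly and yields $|t|^{-(q-3)/(2q)}=|t|^{-1/5}$; your heuristic of ``losing $|t|^{1/5}$ by inserting $(\iD\uk\xi)^{-1}$'' only works once the first estimate is actually at $|t|^{-2/5}$.
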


Of course the point is that $|t|^{-2/5}$ is negligible in front of $|t|^{-1/3}$ in the large-time limit. Note also that while $\psi$ does not decay to zero in the large-time limit we do provide a description of $\psi$ up to eventually vanishing terms. This is a crucial achievement since creating phase shifts is indeed the leading effect of perturbations. It is important\footnote{But classical, even for asymptotically constant waves.} however to understand that whereas knowing phase shifts is in principle sufficient to construct a leading-order description of the original solution, the obtention of the the dynamical behavior of the phase itself requires a knowledge of all the modulation parameters. In particular even when one may enforce $\psi_0\equiv0$ the time evolution will still create a significant phase shift.

The fact that in our statement the prescription of effective initial data for modulation equations is relatively simple in terms of $(V_0,\psi_0)$  is closely related to the fact that we \emph{choose} $\psi_0$ to be low-frequency, which is consistent with a \emph{slow} modulation scenario. It is actually possible to pick any $(V_0,\psi_0)$ such that $W_0=V_0+\uU_x\psi_0$ and obtain equivalent statement where $\|V_0\|_{L^1(\R)}\,+\,\|\d_x\psi_0\|_{L^1(\R)}$ replaces $N_{L^1(\R)}(W_0)$ but then effective initial data have a more complicated form that encode projection to \emph{slow} phase shift. Explicitly in this case, in Theorems~\ref{th:3rd} and~\ref{th:qth}, $(\uk\d_x\psi_0,V_0,\uU\,V_0)$ should be replaced with 
\be\label{hf-data}
\begin{pmatrix}\uk\d_x\psi_0\\\ds
V_0-\left(\uU-\int_0^1\uU\right)\,\d_x\psi_0\\\ds
\uU\,V_0-\left(\tfrac12\uU^2-\int_0^1\tfrac12\uU^2\right)\,\d_x\psi_0\end{pmatrix}\,.
\ee
See \cite[Remark~1.14]{JNRZ-conservation} for a more detailed, related discussion.

In view of the decay rates obtained in Theorem~\ref{th:behavior} and the heuristics concerning orders of vanishing derivatives of spectral curves one may rightfully wonder whether there is a way to obtain a more precise description achieving $\cO(|t|^{-1/2})$ remainders. It is indeed possible to reach this precision if one replaces the third order correction with a pseudo-differential one. Moreover one may achieve rates intermediate between $\cO(|t|^{-2/5})$ and $\cO(|t|^{-1/2})$ infinitely close to $\cO(|t|^{-1/2})$ by replacing the third order correction with higher-order differential corrections. This is the content of our last main results.

However it seems hard to obtain those higher-order corrections by formal arguments of geometric optic type. Instead the higher-order systems may be obtained directly in a way that we explain now. We first make the following observations, to be obtained as corollaries of the proofs of our main results, that the first-order system
\be\label{e:Wlin-1st}
\begin{pmatrix}k\\M\\P\end{pmatrix}_t
+\uom\begin{pmatrix}k\\M\\P\end{pmatrix}_x
+\uk\begin{pmatrix}\dd\omega(\ua)\cdot(k,M,P)_x\\P_x\\\dd F(\ua)\cdot(k,M,P)_x\end{pmatrix}
\ =\ 0\,
\ee
is strictly hyperbolic and that when diagonalizing the corresponding operator as $Q_0^{-1}\textrm{diag}(\d_t+a_0^{(0)}\d_x,\d_t+a_0^{(1)}\d_x,\d_t+a_0^{(2)}\d_x)Q_0^{-1}$ one obtains first-order expansions of the three Floquet eigenvalues $\lambda_0(\xi)$, $\lambda_1(\xi)$, $\lambda_2(\xi)$ passing trough the origin, $\lambda_j(\xi)=\iD\xi a_0^{(j)}+\grandO(|\xi|^3)$ as $\xi\to0$. Now we claim that it is sufficient to include dispersion corrections 
\be\label{e:Wlin-qth}
\begin{pmatrix}k\\M\\P\end{pmatrix}_t
+\uom\begin{pmatrix}k\\M\\P\end{pmatrix}_x
+\uk\begin{pmatrix}\dd\omega(\ua)\cdot(k,M,P)_x\\P_x\\\dd F(\ua)\cdot(k,M,P)_x\end{pmatrix}
\ =\ D_q(\iD^{-1}\d_x)\begin{pmatrix}k\\M\\P\end{pmatrix}\,
\ee
through 
$$
D_q(\xi)=Q_0^{-1}\textrm{diag}(\lambda_0^{(q)}(\xi)-a_0^{(0)}\iD\xi,\lambda_1^{(q)}(\xi)-a_0^{(1)}\iD\xi,\lambda_2^{(q)}(\xi)-a_0^{(2)}\iD\xi)Q_0^{-1}
$$
where $\lambda_j^{(q)}(\xi)$ is the $q$th order Taylor expansion of $\lambda_j(\xi)$ near $0$. By convention we also include the pseudo-differential case where $q=\infty$ by choosing $\lambda_j^{(\infty)}$ as a smooth real-valued function that coincide with $\lambda_j$ in a neighborhood of zero. For simplicity, in \eqref{e:Wlin-3rd}, we have also chosen $D=D_3$.

\begin{theorem}{\emph{Averaged dynamics, higher order.}}\label{th:qth}
Assume that the cnoidal wave of parameters $(\uk,\uM,\uP)$ and phase shift zero is such that condition~\ref{A_intro-bis} holds.\\
Let $q$ be an odd integer larger than $3$, or $q=\infty$.

There exists $C$ and a cut-off function $\chi$ such that for any $W_0$ such that $N_{L^1(\R)}(W_0)<\infty$ there exists $\psi_0$ centered and low-frequency such that with $V_0=W_0-\uU_x\psi_0$ 
$$
\|V_0\|_{L^1(\R)}\,+\,\|\d_x\psi_0\|_{L^1(\R)}
\,\leq\,2\,N_{L^1(\R)}(W_0)
$$
and for any such $\psi_0$ the local parameters $(\psi,M,P)$  of Theorem~\ref{th:behavior} may be chosen in such a way that for any time $t\in\R$ with $|t|\geq1$
$$
\big\|(\uk\d_x\psi(t,\cdot),M(t,\cdot),P(t,\cdot))-
\SigW_q(t)\chi(\iD^{-1}\d_x)(\uk\d_x\psi_0,V_0,\uU\,V_0)
\big\|_{L^\infty(\R)}
\,\leq\,
C\,|t|^{-(q+1)/(2(q+2))}\,N_{L^1(\R)}(W_0)
$$
and 
$$
\|\psi(t,\cdot)-\eD_1\cdot\,(\uk\d_x)^{-1}\SigW_q(t)\chi(\iD^{-1}\d_x)(\uk\d_x\psi_0,V_0,\uU\,V_0)\|_{L^\infty(\R)}
\,\leq\,C\,|t|^{-1/3}\,N_{L^1(\R)}(W_0)
$$
where $\SigW_q$ is the solution operator to System~\ref{e:Wlin-qth}.
\end{theorem}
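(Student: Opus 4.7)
The plan is to adapt the proof of Theorem~\ref{th:3rd} by substituting the $q$-th order Taylor polynomials $\lambda_j^{(q)}$ of the three critical Floquet eigenvalues $\lambda_j$ for their cubic expansion, and applying the oscillatory-integral estimate at the matching order. Starting from the Bloch decomposition of $S(t)$ and the spectral decomposition of the Bloch symbols $L_\xi$ in a neighborhood of $\xi=0$, I isolate the critical contribution coming from the three low eigenvalues $\lambda_j(\xi)$, $j=0,1,2$, passing through the origin; the complementary contribution decays at rate $|t|^{-1/2}$ via the van~der~Corput bound available on the rest of the spectrum thanks to the non-vanishing second derivative stipulated by~\ref{A_intro-bis}, and since $(q+1)/(2(q+2))\leq 1/2$ it is absorbed in the sought remainder. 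The cutoff $\chi(\xi)$ restricts the analysis to a neighborhood of zero on which $\lambda_j^{(q)}$ is a meaningful Taylor truncation of $\lambda_j$.

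The solution operator $\SigW_q(t)$ of \eqref{e:Wlin-qth} has, after the change of basis $Q_0$ diagonalizing the first-order Whitham operator, Fourier symbol $\diag\bigl(e^{t\lambda_0^{(q)}(\xi)},e^{t\lambda_1^{(q)}(\xi)},e^{t\lambda_2^{(q)}(\xi)}\bigr)$. The low-frequency choice of $\psi_0$ together with $V_0=W_0-\uU_x\psi_0$ is precisely the one for which, via the spectral identifications already carried out in the proof of Theorem~\ref{th:3rd}, the Bloch coordinates obtained by projection onto the three critical branches coincide, up to acceptable error, with $Q_0\,\chi(\iD^{-1}\d_x)(\uk\d_x\psi_0,V_0,\uU\,V_0)$. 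The problem thus reduces to showing, for $G_0$ any component of $(\uk\d_x\psi_0,V_0,\uU\,V_0)$,
$$
\Bigl\|\int\chi(\xi)\,e^{\iD\xi\,\cdot}\bigl(e^{t\lambda_j(\xi)}-e^{t\lambda_j^{(q)}(\xi)}\bigr)\hat G_0(\xi)\,\dD\xi\Bigr\|_{L^\infty(\R)}\leq C\,|t|^{-(q+1)/(2(q+2))}\,\|G_0\|_{L^1(\R)}.
$$

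Factor the symbolic difference as $e^{t\lambda_j(\xi)}-e^{t\lambda_j^{(q)}(\xi)}=e^{t\lambda_j^{(q)}(\xi)}\bigl(e^{t R_j(\xi)}-1\bigr)$ with Taylor remainder $R_j(\xi)=\grandO(|\xi|^{q+1})$. Assumption~\ref{A_intro-bis} ensures both that the phase $\iD\xi x+t\lambda_j^{(q)}(\xi)$ has a non-degenerate cubic critical point at $\xi=0$ and that its second derivative (proportional to $\xi$ near zero) does not vanish for $\xi\neq 0$ on the support of $\chi$. Airy-type scaling applies near the origin and van~der~Corput with a lower bound on $|\phi''|$ applies away from it, so that the oscillatory integral estimate developed earlier in the paper --- already invoked at $q=3$ in the proof of Theorem~\ref{th:3rd} --- applies verbatim: splitting the Bloch integration at an optimal scale $\rho$ balancing the Airy gain from the cubic phase against the amplitude's vanishing of order $q+1$ yields the claimed $L^\infty$-kernel bound of order $|t|^{-(q+1)/(2(q+2))}$. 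The bound for the phase $\psi$ follows by one formal integration, which amounts to dividing the relevant symbol by $\iD\xi$ and gaining one factor of $|\xi|$, reducing the estimate to the $|t|^{-1/3}$ bound already established for $\psi$ in Theorem~\ref{th:behavior}.

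The main obstacle is the sharpness of the oscillatory kernel estimate: its tightness relies on the full strength of Assumption~\ref{A_intro-bis}, both to secure the cubic non-degeneracy of the phase at $\xi=0$ (controlling the inner Airy contribution) and to keep the second derivative of the phase bounded away from zero for $\xi\neq 0$ on the support of $\chi$ (controlling the outer van~der~Corput contribution). Every other element is a direct transcription of the Bloch-spectral analysis underlying the proof of Theorem~\ref{th:3rd}, with polynomial degrees promoted from $3$ to $q$ (and with the pseudo-differential case $q=\infty$ handled identically, using the smooth real-valued extension prescribed before the statement).
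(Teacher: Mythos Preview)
Your overall architecture matches the paper's: reduce the eigenvectors to their values at $\xi=0$ (cost $|t|^{-1/2}$ on the parameters via the refined van der Corput lemma with $p=3$, $\alpha=1$), then compare the two oscillating phases $\lambda_j$ and $\lambda_j^{(q)}$. Two points need repair.

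First, the Taylor remainder has the wrong order. Since the spectrum is purely imaginary and the eigenvalue curves satisfy $\lambda_j(-\xi)=-\lambda_j(\xi)$, only odd powers occur; with $q$ odd this forces $R_j(\xi)=\grandO(|\xi|^{q+2})$, not $\grandO(|\xi|^{q+1})$. This extra power is exactly what produces the exponent $(q+1)/(2(q+2))$: the paper's phase-comparison lemma (Lemma~\ref{l:phase-vdC}) with $p=3$ and order of contact $q+2$ yields $((q+2)-1)/((q+2)\cdot 2)=(q+1)/(2(q+2))$. With your stated order $q+1$ the same mechanism would only give $q/(2(q+1))$.

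Second, your factorization $e^{t\lambda_j^{(q)}}(e^{tR_j}-1)$ and the accompanying ``split at an optimal scale'' sketch is not what the paper does, and as written it hides a real difficulty: the amplitude $e^{tR_j}-1$ is $t$-dependent, so one cannot simply quote van der Corput on the outer piece without controlling the $\xi$-derivative of this factor (which brings in an extra $t$). The paper avoids this by estimating the \emph{difference} of the two full oscillatory integrals directly, tracking how the critical point of $\omega_0^{(p-2)}$ follows that of $\omega^{(p-2)}$ (Lemma~\ref{l:vanishing}), and then splitting around both; this is Lemma~\ref{l:phase-vdC}. More importantly, your treatment of the bound on $\psi$ is a genuine gap: dividing by $\iD\xi$ leaves an amplitude with a $|\xi|^{-1}$ singularity at the critical point, and there is no ``$|t|^{-1/3}$ bound already established for $\psi$'' in Theorem~\ref{th:behavior} to fall back on (that theorem bounds $\uk\d_x\psi$, not $\psi$). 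The paper handles this singular-amplitude phase comparison through a separate lemma, Lemma~\ref{l:singular-phase-vdC}, tailored to amplitudes $F$ with $F\,|\cdot-\xi_*|$ bounded; it is this estimate, not a reduction to Theorem~\ref{th:behavior}, that closes the $\psi$-bound.
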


The foregoing construction of $D_q$ follows closely the classical construction of artificial viscosity system as large-time asymptotic equivalents to systems that are only parabolic in the hypocoercive sense of Kawashima. We refer the reader for instance to \cite[Section~6]{Hoff_Zumbrun-NS_compressible_pres_de_zero}, \cite{Rodrigues-compressible}, \cite[Appendix~B]{JNRZ-conservation} or \cite[Appendix~A]{R} for a description of the latter. A notable difference however is that in the diffusive context higher-order expansions of dispersion relations beyond the second-order necessary to capture some dissipation does not provide any sharper description as the second-order expansion already provides the maximal rate compatible with a first-order expansion of eigenvectors. 

We stress also that a significant difference with the third-order case dealt with in Theorem~\ref{th:3rd} is the necessity to introduce the low-frequency cut-off $\chi(\iD^{-1}\d_x)$. This is due to the fact that for higher-order expansions one can not derive good dispersion properties for the full evolution from the mere knowledge of such behavior for the low-frequency part. This is somehow analogous to the fact that slow expansions of well-behaved parabolic systems may produce ill-posed systems.

At last one may also improve the description of the phase itself up to $\cO(|t|^{-1/2})$ remainders. But this requires a suitably tailored refined effective initial data.

\begin{theorem}{\emph{Averaged dynamics, sharpest description.}}\label{th:qth-phase}
Assume that the cnoidal wave of parameters $(\uk,\uM,\uP)$ and phase shift zero is such that condition~\ref{A_intro-bis} holds.\\
Let $q$ be an odd integer larger than $3$, or $q=\infty$.

There exists $C$ such that for any $W_0$ such that $N_{L^1(\R)}(W_0)<\infty$ there exists a low-frequency $(\tpsi_0,\tM_0,\tP_0)$ such that the local parameters $(\psi,M,P)$  of Theorem~\ref{th:behavior} may be chosen in such a way that for any time $t\in\R$ with $|t|\geq1$
$$
\big\|(\uk\d_x\psi(t,\cdot),M(t,\cdot),P(t,\cdot))-
\SigW_q(t)(\tpsi_0,\tM_0,\tP_0)
\big\|_{L^\infty(\R)}
\,\leq\,
C\,|t|^{-(q+1)/(2(q+2))}\,N_{L^1(\R)}(W_0)
$$
and 
$$
\|\psi(t,\cdot)-\eD_1\cdot\,(\uk\d_x)^{-1}\SigW_q(t)(\tpsi_0,\tM_0,\tP_0)\|_{L^\infty(\R)}
\,\leq\,C\,|t|^{-(q-1)/(2(q+2))}\,N_{L^1(\R)}(W_0)\,.
$$
\end{theorem}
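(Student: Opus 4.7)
The approach refines the proof of Theorem~\ref{th:qth} by replacing its naive effective initial data $\chi(\iD^{-1}\d_x)(\uk\d_x\psi_0, V_0, \uU V_0)$---which matches the slow Bloch data of $W_0$ only to order zero in the Floquet exponent $\xi$---by a pseudodifferential transform of $W_0$ matching those data up to order $q$. In the Bloch-Fourier decomposition already used in Theorem~\ref{th:behavior}, the local parameters $(\uk\d_x\psi, M, P)(t,\cdot)$ coincide, modulo fast-mode remainders of size $\cO(|t|^{-1/2})$ in $L^\infty$, with the coordinates in the Whitham eigenbasis of the projection of $S(t)W_0$ onto the three spectral curves $\lambda_j(\xi)$, $j=0,1,2$, passing through the origin.

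Under~\ref{A_intro-bis}, analytic perturbation theory around $\xi=0$---after desingularization of the Jordan block of $L_0$, whose geometric kernel is $\uU_x$ and whose generalized eigenvectors arise from derivatives of $\Up{\ua}$ along the wave manifold---produces a smooth family of Bloch eigenpairs $(\lambda_j(\xi), q_j(\xi,\cdot))_{j=0,1,2}$ on a neighborhood of the origin, with dual basis $q_j^*(\xi,\cdot)$. Fixing a smooth low-frequency cutoff $\chi$ and denoting by $Q_0$ the Whitham eigenframe, I would set
$$
(\widehat{\tpsi_0}, \widehat{\tM_0}, \widehat{\tP_0})(\xi) \ :=\ Q_0\,\chi(\xi)\,\bigl(\langle q_j^*(\xi,\cdot),\check W_0(\xi,\cdot)\rangle\bigr)_{j=0,1,2}.
$$
With this choice, at the Bloch-Fourier level $\SigW_q(t)(\tpsi_0, \tM_0, \tP_0)$ is represented by $\mathrm{diag}(e^{t\lambda_j^{(q)}(\xi)})_{j}$ acting on precisely the slow coefficients of $W_0$ that drive $(\uk\d_x\psi,M,P)$, while the true slow evolution sees $\mathrm{diag}(e^{t\lambda_j(\xi)})_{j}$. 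Since $\lambda_j-\lambda_j^{(q)}=\cO(|\xi|^{q+1})$, the residual symbol on each diagonal entry vanishes to order $q+1$ at the origin.

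The error estimates then reduce to $L^1\to L^\infty$ oscillatory-integral bounds of the type proven earlier in the paper. For a smooth compactly supported symbol vanishing to order $m$ at the origin, combined with a dispersive phase of Airy type at $\xi=0$ (cubic contact under~\ref{A_intro-bis}) and at worst parabolic elsewhere, those lemmas yield the sup-$x$ bound $|t|^{-(2m-q-1)/(2(q+2))}$, obtained by balancing direct $L^1_\xi$ integration on $|\xi|\lesssim|t|^{-1/(q+2)}$ against stationary phase on the complement. Taking $m=q+1$ gives the first inequality of the theorem. For the phase, $\psi(t,\cdot)$ and $\eD_1\cdot(\uk\d_x)^{-1}\SigW_q(t)(\tpsi_0,\tM_0,\tP_0)$ are both centered and low-frequency, so their difference equals $\eD_1\cdot(\uk\d_x)^{-1}$ applied to the error on $(\uk\d_x\psi,M,P)$; at Fourier level this is division by $\iD\uk\xi$, which drops the effective order of vanishing from $q+1$ to $q$ and, by the same oscillatory-integral lemma applied with $m=q$, produces the sharper exponent $(q-1)/(2(q+2))$.

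The main obstacle is the construction of the Bloch-Whitham interface $(q_j,q_j^*)$ smoothly through the Jordan block of $L_0$ at zero, compatibly with the centered and low-frequency constraints on $(\tpsi_0,\tM_0,\tP_0)$ required by Theorem~\ref{th:behavior}. In particular, the eigenvector $q_0(\xi,\cdot)$ associated with $\lambda_0$---tangent to $\uU_x$ at $\xi=0$---must be chosen with an $O(\xi)$ correction encoding the wavenumber derivative of $\Up{}$, so that the first component of $(\widehat{\tpsi_0},\widehat{\tM_0},\widehat{\tP_0})(\xi)$ factors through $\iD\uk\xi$ and $\tpsi_0$ is itself centered low-frequency, making $(\uk\d_x)^{-1}\tpsi_0$ unambiguously defined. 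Once this interface is set up and the map $W_0\mapsto(\tpsi_0,\tM_0,\tP_0)$ is verified to be bounded from $L^1(\R)$ into the corresponding space of low-frequency distributions, both estimates amount to a refinement of the bookkeeping already carried out in the proof of Theorem~\ref{th:qth}.
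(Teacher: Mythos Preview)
Your route differs from the paper's. The paper does \emph{not} match the slow Bloch data to all orders via a pseudodifferential construction; it only pushes the Taylor expansion of the dual eigenfunctions one step further than in Theorem~\ref{th:qth}, replacing $\overline{\tphi_j(0,y)}$ by $\overline{\tphi_j(0,y)}+\xi\,\overline{\d_\xi\tphi_j(0,y)}$ in the kernels $\ko{j}$ and $\go{j}$. This first-order correction already makes the amplitude remainder $O(|\xi|^2)$, hence $O(|\xi|)$ after the $1/(\iD\uk\xi)$ in the phase kernel, and Lemma~\ref{l:refined-vdC} with $\alpha=1$, $p=3$ then disposes of it at rate $|t|^{-1/2}$. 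What is left is the pure eigenvalue replacement $\lambda_j\to\lambda_j^{(q)}$, handled by Lemmas~\ref{l:phase-vdC} and~\ref{l:singular-phase-vdC}. The payoff is an explicit formula for $(\tpsi_0,\tM_0,\tP_0)$ as the basic data of Theorem~\ref{th:3rd} plus a first-order differential correction built from $\d_\xi\tphi_j(0,\cdot)$ and $\d_\xi^2\tphi_j(0,\cdot)$; your all-order matching gives a less explicit object for no extra gain, since first order already saturates the $|t|^{-1/2}$ ceiling.

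Your mechanism for the rates needs correcting. You treat $e^{t\lambda_j(\xi)}-e^{t\lambda_j^{(q)}(\xi)}$ as a static amplitude ``vanishing to order $m$'' and then balance against stationary phase; but this factor is $t$-dependent, with $\xi$-derivatives growing in $t$, so a van der Corput argument on a frozen symbol does not give the exponents you quote. The paper's Lemmas~\ref{l:phase-vdC} and~\ref{l:singular-phase-vdC} are designed exactly for this situation (two nearby phases, not an amplitude cancellation), and they are what you must invoke. Relatedly, you write $\lambda_j-\lambda_j^{(q)}=\cO(|\xi|^{q+1})$; to land on the stated exponents $(q+1)/(2(q+2))$ and $(q-1)/(2(q+2))$ you need $\cO(|\xi|^{q+2})$, which comes from the odd symmetry of the purely imaginary eigenvalues when $q$ is odd---without it the lemmas would return the weaker rates $q/(2(q+1))$ and $(q-2)/(2(q+1))$.
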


Before entering into proofs of our main statements, to make those statements slightly more concrete let us summarize what we have learned \emph{at leading order} from Theorems~\ref{th:behavior} and~\ref{th:3rd}. At leading order the behavior of $S(t)(W_0)$ is captured by a linear modulation of phase $\psi(t,\cdot)\,\uU_x$ and the phase shift $\uk\psi$ is the antiderivative of the first component of a three-dimensional vector $(\uk\psi_x,M,P)$ that is at leading-order a sum of three linear dispersive waves of Airy type, each one traveling with its own velocity. In particular, three scales coexist : the oscillation of the background wave at scale $1$ in $\uU_x$, spatial separation of the three dispersive waves at linear hyperbolic scale $t$, width of Airy waves of size $t^{1/3}$. This is illustrated by direct simulations in Figure~\ref{fig:multiscale}. To fully appreciate the figure, note that oscillatory Airy tail is on the left for the left-hand side and right-hand side dispersive waves and on the right for the middle one.

\begin{figure}
\begin{center}
$
\begin{array}{lcr}\hspace{-2.5em}
\includegraphics[scale=0.32]{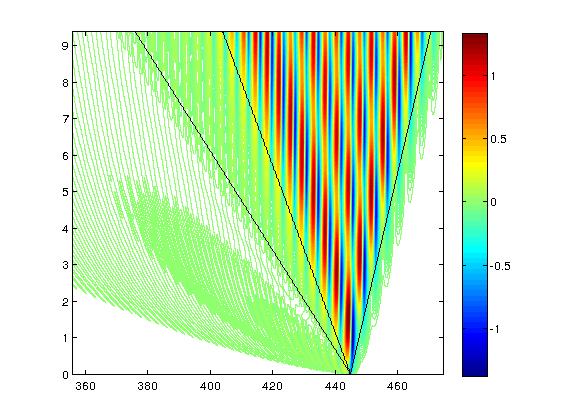}\hspace{-2.5em}
&\includegraphics[scale=0.32]{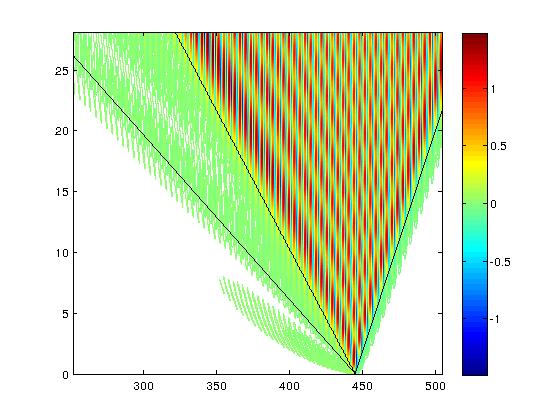}\hspace{-2.5em}
&\includegraphics[scale=0.32]{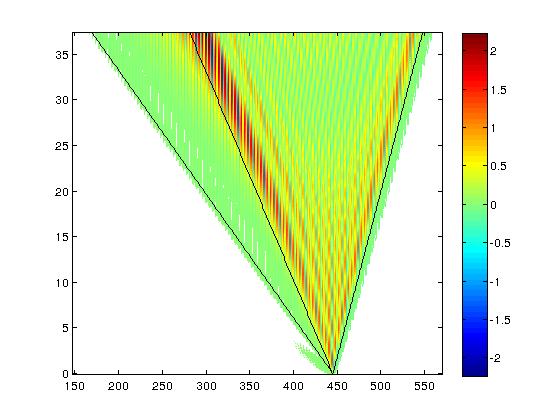}
\end{array}
$
\end{center}
\caption{Three looks at the same time-evolution. The background wave has elliptic parameter $m=0.5$ hence period approximately $3.7081$. Initial data for the perturbation $W_0$ is the product of a sinus with a Gaussian. Dark lines start from the center of the Gaussian, and corresponds to linear group velocities.}
\label{fig:multiscale}
\end{figure}

\subsection{Perspectives}

For the linearized Korteweg--de Vries equation itself, besides the question of proving condition~\ref{A_intro-bis}, still remains the question of providing a derivation of suitable modulation systems similar to \eqref{e:Wlin-qth}, when $q>3$, by formal arguments, either by using directly a geometric optic \emph{ansatz} or by expanding the Hamiltonian energy.

Recall also that the decay proved here is too slow to be directly relevant at nonlinear level. From this, two natural follow-up questions arise:
\begin{enumerate}
\item At the nonlinear level, for the Korteweg--de Vies equation, can we still provide a --- more nonlinear! --- slow modulation description of the asymptotic behavior obtained in \cite{Mikikits-PhD,Mikikits-Teschl} ?
\item Can we perform a similar linear analysis in another situation that could be carried to the nonlinear level ?
\end{enumerate}
On the latter, natural candidates are to be found in dynamics near periodic plane waves of dispersive systems in sufficienty high dimension.

\subsection{Structure of the paper}

The remaining part of the paper is devoted to the proofs of foregoing theorems. In the next section we first recall some elements of Bloch analysis, extract from \cite{Bottman-Deconinck} detailed information on the spectrum of $L$ and derive from it some representations of the corresponding time-evolution. In particular we provide both a spectral decomposition of the evolution and its counterpart in terms of Green kernels. We also gather there spectral asymptotic expansions in singular limits where either Floquet eigenvalues converge to zero or go to $\pm\iD\infty$. In the third section we prove Theorems~\ref{th:bounded} and~\ref{th:asymptotic}, by using respectively the above-mentioned spectral and kernel representations. In the fourth section we achieve the proofs of remaining results. Those rely mostly on low Floquet/low eigenvalue expansions in the spirit of \cite{JNRZ-conservation} combined with suitable oscillatory integral estimates. Proofs of the latter are given in Appendix~\ref{s:oscillatory}. We point out that though the subject is quite classical Appendix~\ref{s:oscillatory}, oriented towards derivation of asymptotically equivalent systems, could be of some general interest. In Appendix~\ref{s:A_numerics} we gather some numerical experiments supporting that Assumption~\ref{A_intro-bis} always holds.


\section{Spectral preparation}\label{s:preparation}

\subsection{Integral transform}\label{s:Bloch}

We first recall how to decompose any function $g$ into a sum of functions that are simpler from the point of view of periodicity, namely
\be\label{inverse-Bloch}
g(x)\ =\ \int_{-\pi}^\pi \eD^{\ii\xi x}\ \check{g}(\xi,x)\ \dD\xi,
\ee
with each $\check{g}(\xi,\cdot)$ periodic of period one, that is
$$
\forall x\in\R,\qquad\check{g}(\xi,x+1)\ =\ \check{g}(\xi,x).
$$

Such an inverse formula may be obtained by rewriting appropriately an inverse Fourier decomposition. For this purpose we introduce direct and inverse Fourier transforms, \emph{via}
$$
\hat g(\xi)\ :=\ \frac{1}{2\pi} \int_\R \eD^{-\ii\xi x} g(x)\ \dD x,\qquad
g(x)\ =\ \int_\R \eD^{\ii\xi x}\ \hat{g}(\xi)\ \dd\xi.
$$
Then the adequate integral transform, called the \emph{Bloch transform} or the Floquet-Bloch transform, may be defined by
\be\label{Bloch}
\cB(g)(\xi,x)\ =\ \check{g}(\xi,x)\ :=\ \sum_{j\in\Z} \eD^{\ii\,2j\pi x}\ \widehat{g}(\xi+2j\pi).
\ee
The Poisson summation formula provides an alternative equivalent formula
$$
\check{g}(\xi,x)\ =\ \sum_{\ell\in\Z}\eD^{-\ii\xi (x+\ell)}g(x+\ell)\,.
$$
As follows readily from \eqref{Bloch}, $\sqrt{2\pi}\,\cB$ is a total isometry from $L^2(\R)$ to $L^2((-\pi,\pi),L_{per}^2((0,1)))$. Interpolating with triangle inequalities also yields Hausdorff-Young inequalities, for $2\leq p\leq\infty$,
$$
\|g\|_{L^p(\R)}\ \leq\ (2\pi)^{1/p}
\| \check{g}\|_{L^{p'}([-\pi,\pi],L^p((0,1)))}\,,\qquad
\|\check{g}\|_{L^{p}([-\pi,\pi],L^{p'}((0,1)))}\ \leq
\ (2\pi)^{-1/p}\|g\|_{L^{p'}(\R)}\,
$$
where $p'$ denotes conjugate Lebesgue exponent, $1/p+1/p'=1$.

We have introduced the Bloch transform so as to turn differential operators with periodic coefficients in multipliers with respect to the Floquet exponent $\xi$. Indeed for $L$ as in \eqref{linop} we have
$$
(Lg)(x)\ =\ \int_{-\pi}^\pi \eD^{\ii\xi x}\ (L_\xi\check{g}(\xi,\cdot))(x)\ \dD\xi,
$$
where each $L_\xi$ acts on periodic functions as
$$
L_\xi\ :=\ -\,\uom\,(\d_x+\ii\xi)\ -\ \uk\,(\d_x+\ii\xi)(\uU\,\cdot)\ -\uk^3\,(\d_x+\ii\xi)^3\,.
$$
On $L_{per}^2((0,1))$ each $L_\xi$ has compact resolvent and it depends analytically on $\xi$ in the strong resolvent sense.

\subsection{Spectrum of $L$}\label{s:spec}

Now we recall the content of \cite[Theorem~7.1]{Bottman-Deconinck}, slightly extended by using \cite[Remark~4]{Bottman-Deconinck} and some extra functional-analytic arguments.

We have fixed a cnoidal wave profile $\uU$ to \eqref{KdV}. Though such waves form a four-dimensional family one may use Galilean invariance and invariances by spacial translation and a suitable scaling to restrict the present discussion to a one-dimensional sub-family
$$
\uU(x)\ =\ 12\,m\,\cn^2(\,\tfrac{x}{\uk}\,,m)
$$
where $m$ is an elliptic parameter\footnote{Note carefully that it is the square of the elliptic modulus.}, $m\in(0,1)$, $\cn(\,\cdot\,,m)$ denotes the corresponding Jacobi elliptic cosine function and wavenumber $\uk$ is such that
$$
\frac{2}{\uk}\ =\ \int_0^{\pi/2}\,\dfrac{\dd\theta}{\sqrt{1-m\sin^2(\theta)}}\,.
$$ 
Corresponding velocity is then given by $\uc=4\,(2m-1)$.

For $\uU$ as above we set $\eta_1=4\,(m-1)$, $\eta_2=4\,(2m-1)$ and $\eta_3=4m$. Then for any couple $(\lambda,\xi)\in \C\times[-\pi,\pi)\setminus\{(0,0)\}$, $\lambda\in\sigma(L_\xi)$ if and only if there exists $\eta\in\,]-\infty,\eta_1)\,\cup\,(\eta_2,\eta_3)$ such that
$$
\lambda^2\ =\ (\eta-\eta_1)\,(\eta-\eta_2)\,(\eta-\eta_3)
$$
and
$$
\textrm{Im}\left(-\,\frac{\lambda}{\uk}\,\int_0^1\dfrac{\dd \theta}{\eta-\uc+\tfrac13\uU(\theta)}\right)\in \xi\ +\ 2\pi\,\Z\,.
$$
Moreover in this case $\lambda$ is a simple eigenvalue of $L_\xi$ and an eigenfunction $\phi$ is provided by
$$
\eD^{\ii\xi\,x}\phi(x)\ =\ \left(1-\tfrac{\uk}{3}\lambda^{-1}\uU_x(x)\right)\ \exp\left(\frac{-\lambda}{\uk}\int_0^x\dfrac{\dd \theta}{\eta-\uc+\tfrac13\uU(\theta)}\right)
$$
while a solution $\tphi$ of the formally dual problem
$$
\bar{\lambda}\,\tphi=\,\uom\,(\d_x+\ii\xi)\,\tphi\ +\ \uk\,(\d_x+\ii\xi)\left(\uU\,\tphi\right)\ +\uk^3\,(\d_x+\ii\xi)^3\,\tphi
$$
is given by 
$$
\eD^{\ii\xi\,x}\tphi(x)\ =\ \dfrac{\eta-\uc+\tfrac13 \uU(x)}{\eta-\uc+\tfrac13\int_0^1\uU}\ \exp\left(\frac{-\lambda}{\uk}\int_0^x\dfrac{\dd \theta}{\eta-\uc+\tfrac13\uU(\theta)}\right)\,.
$$
Normalization of $\phi$ and $\tphi$ ensures all together a suitable form of bi-orthogonality, detailed below, convergence to trigonometric monomials in the limit $|\lambda|\to\infty$ and the absence of singularities on $\tphi$ in the limit $\lambda\to0$.

To carry out our Floquet analysis we shall use some consistent labeling of the spectrum of each $L_\xi$. To this purpose we first observe that on $]-\infty,\eta_1)$ both
$$
\eta\mapsto\sqrt{(\eta_1-\eta)\,(\eta_2-\eta)\,(\eta_3-\eta)}
\quad\textrm{and}\quad
\eta\mapsto\frac{1}{\uk}\,\int_0^1\dfrac{\sqrt{(\eta_1-\eta)\,(\eta_2-\eta)\,(\eta_3-\eta)}}{\uc-\eta-\tfrac13\uU(\theta)}\,\dd \theta
$$
are decreasing, respectively from $\infty$ to $0$ and from $\infty$ to $2\pi$. Therefore we may parametrize the part of the spectrum of $L$ arising from $\eta\in\,]-\infty,\eta_1]$ as $\lambdae{j}(\xi)$, $(j,\xi)\in\Z\times[-\pi,\pi)$ in a way that ensures $\lambdae{0}(0)=0$; the map $(j,\xi)\mapsto\textrm{Im}(\lambdae{j}(\xi))$ is increasing when $\Z\times[-\pi,\pi)$ is endowed with alphabetical order, and odd; for any $(j,\xi)$, $\lambdae{j}(\xi)\in\sigma(L_\xi)$; and for any $j$, $\lambdae{j}(\xi)\stackrel{{\tiny\xi\to\pi}}{\longrightarrow}\lambdae{j+1}(-\pi)$. The structure of the spectrum related to $(\eta_2,\eta_3)$ is less obviously read on above formulas even though some pieces of information may be deduced from a count of multiplicity\footnote{For instance one knows in advance that each $\lambda$ is at most triply covered by $\cup_\xi\sigma(L_\xi)$. Hence the loop may only visit twice each $\lambda$.}. Since it is immaterial to our analysis, for simplicity of notation, we shall do as it could be minimally\footnote{Otherwise one would need to introduce a larger number of pieces parametrized by a finite number of $j$. Numerical experiments suggests that minimal parametrization does hold. This claim would follow from an examination of limits $\eta\to\eta_2$ and $\eta\to\eta_3$ provided we were able to prove monotonicity of 
$$
\eta\mapsto-\frac{1}{\uk}\,\int_0^1\dfrac{\sqrt{(\eta_1-\eta)\,(\eta_2-\eta)\,(\eta_3-\eta)}}{\uc-\eta-\tfrac13\uU(\theta)}\,\dd \theta
$$
on $(\eta_2,\eta_3)$ or provided that we were able to prove that the previous mapping does note take the value $-2\pi$ on $(\eta_2,\eta_3)$.} parametrized, that is we shall write it as\footnote{The notational convention ${}_{\rm p}$ and ${}_{\rm e}$ is motivated by the fact that in the large period limit the spectrum associated with the line may be thought as arising from the essential spectrum of the generator of the dynamics linearized about a solitary wave while the loop emerges from an embedded eigenvalue. See \cite{Gardner-large-period,Sandstede-Scheel-large-period}.} $\lambdap{j}(\xi)$, $(j,\xi)\in\{1,2\}\times[-\pi,\pi)$ in a way that ensures $\lambdap{j}(0)=0$ for any $j$; each map $\xi\mapsto\textrm{Im}(\lambdap{j}(\xi))$ is odd; for any $(\sigma,\xi)$, $\lambdap{j}(\xi)\in\sigma(L_\xi)$; and $\lambdap{1}(\xi)\stackrel{{\tiny\xi\to\pi}}{\longrightarrow}\lambdap{2}(-\pi)$, $\lambdap{2}(\xi)\stackrel{{\tiny\xi\to\pi}}{\longrightarrow}\lambdap{1}(-\pi)$. Likewise we shall use notation $\phie{j}(\xi,x)$, $\phip{j}(\xi,x)$, $\tphie{j}(\xi,x)$, $\tphip{j}(\xi,x)$, for corresponding eigenfunctions.

We use the above spectral decomposition to represent when $\xi\neq0$ the evolution generated by $L_\xi$ as 
$$
S_\xi(t)\ =\ \sum_{j\in\{1,2\}} \eD^{\lambdap{j}(\xi)\,t}\ \phip{j}(\xi,\cdot)\,\langle \tphip{j}(\xi,\cdot),\cdot\rangle
\ +\ \sum_{j\in\Z} \eD^{\lambdae{j}(\xi)\,t}\ \phie{j}(\xi,\cdot)\,\langle \tphie{j}(\xi,\cdot);\cdot\rangle
$$
where $\langle\cdot;\cdot\rangle$ denotes canonical Hermitian scalar product\footnote{That is, $\langle f;g \rangle=\int_0^1 \bar f g$.} on $L^2((0,1))$. Even when $t=0$ this requires some justification. Let us first observe that since for each fixed Floquet exponent $\xi$ the operator $\uk^3(\d_x+\iD\xi)^3$ is skew-adjoint on $L_{per}^2((0,1))$ with trace-class resolvents\footnote{They belong to the Schatten class $\mathfrak{S}_p(L_{per}^2((0,1)))$ whenever $p>1/3$.}, 
and $L_\xi$ is a relatively compact perturbation of this operator, it follows from Keldysh' theory that root vectors of $L_\xi$ --- that is, $(\phie{j}(\xi,\cdot))_{j\in\Z}$ and $(\phip{j}(\xi,\cdot))_{j\in\{1,2\}}$ when $\xi\neq0$ --- form a complete subset of $L_{per}^2((0,1))$ \cite[Theorem~4.3]{Markus}. The same is true for root vectors of their formally adjoint operators. Since we have built simultaneously bi-orthogonal families, this ensures that they form minimally complete families. It actually follows from very general arguments that for $\xi\neq0$ they do form a Schauder basis \cite[Lemma~2.3]{Haragus-Kapitula-Hamiltonian}. Here we rather provide a direct proof of the fact that they form a Risez basis, that is, an \emph{unconditional} basis. This is a direct consequence of the characterization in \cite[Theorem~3.4.5]{Davies} together with estimates proving Theorem~\ref{th:bounded} when $s=0$, see Proposition~\ref{prop:bounded}.

\subsection{Small-eigenvalue expansions}\label{s:side-band}

When $\xi\to0$, $\lambdae{0}(\xi)$, $\lambdap{1}(\xi)$ and $\lambdap{2}(\xi)$ converge to zero and $\phie{0}(\xi,\cdot)$, $\phip{1}(\xi,\cdot)$ and $\phip{2}(\xi,\cdot)$ become singular\footnote{We have indeed normalized $\phi$ and $\tphi$ to ensure that singularities remain confined to \emph{right} eigenfunctions.}. However as follows from Kato's perturbation theory their combined evolution
$$
\eD^{\lambdae{0}(\xi)\,t}\ \phie{0}(\xi,\cdot)\,\langle \tphie{0}(\xi,\cdot);\cdot\rangle
\ +\ 
\eD^{\lambdap{1}(\xi)\,t}\ \phip{1}(\xi,\cdot)\,\langle \tphip{1}(\xi,\cdot);\cdot\rangle
\ +\ 
\eD^{\lambdap{2}(\xi)\,t}\ \phip{2}(\xi,\cdot)\,\langle \tphip{2}(\xi,\cdot);\cdot\rangle
$$
remains analytic in $\xi$ even when $\xi\to0$. Moreover one may readily check on explicit formulas that singularities are mild compared to those arising from generic splitting of Jordan block structures, that include algebraic singularities described by Puiseux series, see \cite[Section II.\S 1.2, p.65]{Kato}. In the case under study, since the spectrum lies on the imaginary axis such strong singularities are precluded by arguments similar to those leading to Rellich's theorem, see \cite[Theorem~XII.3]{Reed-Simon_IV}. However in the spectral analysis of dynamics linearized about periodic waves this fact turns out to be a much wider phenomenon that is strongly connected with the existence of an averaged dynamics, even when underlying waves are spectrally unstable and a Rellich-type argument fails. See for instance \cite{Serre,Noble-Rodrigues,Johnson-Zumbrun-Bronski-Whitham-KdV-Bloch,Benzoni-Noble-Rodrigues,KR}. In particular the small eigenvalue asymptotics provided below is a corollary of the proof of \cite[Theorem~1]{Benzoni-Noble-Rodrigues}.

To stress symmetries in its statement and in similar following propositions, we simply drop suffixes ${}_{\rm{p}}$ and ${}_{\rm{e}}$ when dealing with $\lambdae{0}$, $\lambdap{1}$ and $\lambdap{2}$, and $\tphie{0}$, $\tphip{1}$ and $\tphip{2}$. Concerning right eigenfunctions however our convention is that suffix-less functions are desingularized, that is for $\xi\neq 0$
\be
\phi_0(\xi,\cdot)\ =\ \iD\uk\xi\,\phie{0}(\xi,\cdot)
\qquad\textrm{and}\qquad
\phi_j(\xi,\cdot)\ =\ \iD\uk\xi\,\phip{j}(\xi,\cdot)\,,\ j=1,2\,.
\ee

\begin{proposition}\label{prop:slow-spec-expansion}
There exist $\eps_0>0$ and $\xi_0\in(0,\pi)$ such that curves $\lambda_j:[-\xi_0,\xi_0]\to B(0,\eps_0)$, $j=0,1,2$, are analytic and that for $\xi\in[-\xi_0,\xi_0]$
$$
\sigma(L_\xi)\cap B(0,\eps_0)\ =\ \left\{\ \lambda_j(\xi)\ \middle|\ j\in\{0,1,2\}\ \right\}
$$
and associated left and right eigenfunctions $\tphi_j(\xi,\cdot)$ and $\phi_j(\xi,\cdot)$, $j=0,1,2$, satisfying pairing relations
$$
\langle\tphi_j(\xi,\cdot),\phi_k(\xi,\cdot)\rangle=\iD\uk\xi\ \delta^j_k,\qquad 0\le j,k\le 2,
$$
are obtained as
\be
\begin{array}{rcccl}
\displaystyle
\phi_j(\xi,\cdot)&=&\displaystyle
\quad\beta_{0}^{(j)}(\xi)\quad q_{0}(\xi,\cdot)
&+&\displaystyle
(\iD\uk\xi)\sum_{l=1}^2\beta_l^{(j)}(\xi)\ q_l(\xi,\cdot)\\
\displaystyle
\tphi_j(\xi,\cdot)&=&\displaystyle
-(\iD\uk\xi)\,\tbeta_{0}^{(j)}(\xi)\quad\tq_{0}(\xi,\cdot)
&+&\displaystyle
\quad\sum_{l=1}^2\tbeta_l^{(j)}(\xi,\cdot)\ \tq_l(\xi,\cdot)\\
\end{array}
\ee
where
\begin{itemize}
\item $(q_0(\xi,\cdot),q_{1}(\xi,\cdot),q_{2}(\xi,\cdot))$ and $(\tq_0(\xi,\cdot),\tq_1(\xi,\cdot),\tq_{2}(\xi,\cdot))$ are dual bases of spaces associated with the spectrum in $B(0,\eps_0)$ of respectively $L_\xi$ and its formal adjoint $L_\xi^*$, that are analytic in $\xi$ and emerge from $(\,\ubU',\,*\,,\,*\,)$ and $(\,*\,,\one,\ubU\,)$ at $\xi=0$, where $\one$ denotes the constant function with value $1$;
\item $(\beta^{(0)}(\xi),\beta^{(1)}(\xi),\beta^{(2)}(\xi))$ and $(\tbeta^{(0)}(\xi),\tbeta^{(1)}(\xi),\tbeta^{(2)}(\xi))$ are dual bases of $\bC^3$ that are analytic in $\xi$.
\end{itemize}
\end{proposition}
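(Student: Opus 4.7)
The plan is to apply Kato's analytic perturbation theory to the total spectral projector associated with the eigenvalues of $L_\xi$ inside a small disk $B(0,\eps_0)$, and then to rule out Puiseux-type singularities by leveraging the a priori information that $\sigma(L_\xi)\subset\iD\RR$.

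First I would identify the generalized kernel of $L_0$ at $\xi=0$ and of its formal adjoint. Translation invariance of \eqref{KdV-move} in the moving frame gives $L_0\,\ubU'=0$, accounting for the emergence $q_0(0,\cdot)=\ubU'$. Derivatives of the three-parameter family of profiles $\Up{k,M,P}$ with respect to $(k,M,P)$ at $(\uk,\uM,\uP)$ produce two further generalized eigenvectors, so that the generalized kernel of $L_0$ is three-dimensional and $L_0$ carries a nontrivial Jordan structure at $0$. For $L_0^*$, the conservation laws for mass and for Benjamin's impulse $\tfrac12U^2$ provide respectively $\one$ and $\ubU$ as elements of the generalized kernel, which accounts for the emergence $(*,\one,\ubU)$ at $\xi=0$, the first entry being the less explicit adjoint translation direction.

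Next, since $L_\xi$ depends analytically on $\xi$ in the norm-resolvent sense and is a relatively compact perturbation of the skew-adjoint operator $-\uk^3(\d_x+\iD\xi)^3$, for $\eps_0,\xi_0$ small enough the total projector
\[
\Pi(\xi)\ =\ \frac{1}{2\pi\iD}\oint_{|\lambda|=\eps_0}(\lambda-L_\xi)^{-1}\,\dD\lambda
\]
is analytic on $[-\xi_0,\xi_0]$ and of constant rank $3$. I would then pick an analytic basis $(q_l(\xi,\cdot))_{l=0,1,2}$ of the range of $\Pi(\xi)$ extending the prescribed basis at $\xi=0$, together with its dual analytic basis $(\tq_l(\xi,\cdot))_{l=0,1,2}$ of the range of $\Pi(\xi)^*$ built analogously from a neighborhood of $(*,\one,\ubU)$. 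This reduces the spectral problem inside $B(0,\eps_0)$ to the study of a $3\times 3$ matrix $M(\xi)$ that is analytic in $\xi$ with $M(0)$ nilpotent.

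The main obstacle is to show that the three eigenvalues of $M(\xi)$ depend analytically on $\xi$ rather than through Puiseux series with fractional exponents, which would be the generic outcome when splitting a nontrivial Jordan block. Here I would invoke the purely imaginary location $\sigma(L_\xi)\subset\iD\RR$ together with the fact that each branch is smooth on each side of $\xi=0$ by the Bottman--Deconinck parametrization recalled in Subsection~\ref{s:spec}. A Rellich-type argument, as carried out in the proof of \cite[Theorem~1]{Benzoni-Noble-Rodrigues}, then forces any Puiseux exponent to be an integer, so that the $\lambda_j$ and a properly normalized family of right and left eigenvectors of $M(\xi)$ are genuinely analytic in $\xi$; this is precisely the content of the rescaling by $\iD\uk\xi$ performed just before the statement. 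Diagonalising $M(\xi)$ in the analytic basis $(q_l)_l$ then yields the coefficient vectors $\beta^{(j)}(\xi),\tbeta^{(j)}(\xi)\in\CC^3$, and the pairing relation $\langle\tphi_j(\xi,\cdot),\phi_k(\xi,\cdot)\rangle=\iD\uk\xi\,\delta^j_k$ follows directly from the duality $\langle\tq_l,q_m\rangle=\delta^l_m$ combined with the two rescalings by $\iD\uk\xi$ and the cancellation $\overline{\iD\uk\xi}\,(-1)=\iD\uk\xi$ arising from the Hermitian character of the pairing.
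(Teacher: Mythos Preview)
Your sketch is essentially the approach the paper takes: the paper does not prove the proposition directly but defers to \cite[Theorem~1]{Benzoni-Noble-Rodrigues}, after noting (in the paragraph preceding the statement) that a Rellich-type argument based on $\sigma(L_\xi)\subset\iD\RR$ would also suffice to preclude Puiseux singularities. Your outline of Kato's analytic total projector, reduction to a $3\times3$ matrix, and identification of the generalized kernels of $L_0$ and $L_0^*$ is correct.

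There is, however, a mischaracterization worth flagging. You write that the Rellich-type argument is ``as carried out in the proof of \cite[Theorem~1]{Benzoni-Noble-Rodrigues}'', but the paper explicitly distinguishes the two: the Rellich route relies on the spectrum being purely imaginary, whereas the argument of \cite{Benzoni-Noble-Rodrigues} is structural, exploiting the conservative form $L=-\d_x(\,\cdot\,)$ to factor an $\iD\xi$ out of the reduced matrix. This structural argument is more general (it works even for spectrally unstable waves where a Rellich argument fails) and it is what most directly produces the precise placement of the $\iD\uk\xi$ factors in the decomposition of $\phi_j$ and $\tphi_j$. A bare Rellich argument gives analyticity of the $\lambda_j$, but the specific asymmetric factoring --- $\iD\uk\xi$ multiplying $q_1,q_2$ in $\phi_j$ versus $\tq_0$ in $\tphi_j$ --- reflects the conservation-law structure rather than merely the reality of $\iD^{-1}\lambda_j$. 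So your argument is valid for the present setting, but you should not attribute it to \cite{Benzoni-Noble-Rodrigues}, and you should be aware that the cited proof proceeds differently.
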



The foregoing proposition enables us to perform a splitting of the evolution semi-group tailored to quantify space-modulated stability, and essentially identical to those used by the author and its collaborators in the study of parabolic systems, see \cite{JNRZ-conservation}. Explicitly,  
\be\label{e:spp+tS}
S(t)\ =\ \uU_x\ \eD_1\cdot\spp(t)\ +\ \tS(t)
\ee
with $\spp(t)=\sum_{j\in\{0,1,2\}}\spp_j(t)$, the action of $\spp_j(t)$, $j=0,1,2$, on a function $g$ being defined on the Fourier side by, for\footnote{With usual meaning that zero times something undefined equals zero.} $\xi\in\R$,
\be\label{e:spp}
\widehat{\spp_j(t)(g)}(\xi)\ =\ \dfrac{\chi(\xi)}{\iD\uk\xi}\,\beta^{(j)}(\xi)\ \eD^{\lambda_{j}(\xi)\,t}\left\langle\tphi_{j}(\xi,\cdot);\check{g}(\xi,\cdot)\right\rangle
\ee
and $\tS(t)=\tSo(t)+\tSe(t)$, $\tSo(t)$ and $\tSe(t)$ being defined by their Bloch symbols as, for $\xi\in(-\pi,\pi)$,
\be\label{e:tS}
\begin{array}{rcl}
(\tSo(t))_\xi
&=&\displaystyle 
\frac{(1-\chi(\xi))}{\iD\uk\xi}\ \sum_{j\in\{0,1,2\}}\ \eD^{\lambda_{j}(\xi)\,t}\phi_{j}(\xi,\cdot)\left\langle\tphi_{j}(\xi,\cdot);\ \cdot\ \right\rangle\\
&+&\displaystyle 
\chi(\xi)\ \sum_{j\in\{0,1,2\}}\ \eD^{\lambda_{j}(\xi)\,t}\frac{\phi_{j}(\xi,\cdot)-\beta_{1}^{(j)}(\xi)\,\uU_x}{\iD\uk\xi}\left\langle\tphi_{j}(\xi,\cdot);\ \cdot\ \right\rangle\\[0.5em]
(\tSe(t))_\xi&=&\displaystyle
\sum_{j\in\Z^*}\ \eD^{\lambdae{j}(\xi)\,t}\phie{j}(\xi,\cdot)\left\langle\tphie{j}(\xi,\cdot);\ \cdot\ \right\rangle\\
\end{array}
\ee
where $\chi$ is a suitable symmetric cut-off function and $\eD_1$ is the third vector of the canonical basis of $\C^3$. For proofs of Theorems~\ref{th:bounded} and~\ref{th:asymptotic} we could have replaced in above definitions $\beta^{(j)}(\xi)$ with $\beta^{(j)}(0)$ to deal with 
$$
\phi_{j}(\xi,\cdot)-\beta_{0}^{(j)}(0)\,\uU_x\,=\,
\phi_{j}(\xi,\cdot)-\phi_{j}(0,\cdot)\,.
$$
The above definitions prove to be more convenient only when we turn to analyze asymptotic behavior of solutions.

\subsection{Large-eigenvalue expansions}\label{s:high-freq}

In contrast with expansions when $(\lambda,\xi)\to(0,0)$, expansions when $|\lambda|\to\infty$ are readily obtained from explicit formulas of Subsection~\ref{s:spec}. Yet since they play a prominent role in the analysis we find convenient to state at least some of them explicitly here. Note also that, while we derive them here from explicit formulas, those expansions are in principle accessible by a direct analysis, not relying on integrability.

We start with a basic lemma.

\bl
Uniformly in $\xi\in(-\pi,\pi)$,
$$
\lambdae{j}(\xi)\ \stackrel{|j|\to\infty}{=}\ (\uk(2\pi\,j+\xi))^3\,+\,\cO(j)\,.
$$
\el

\begin{proof}
We focus on the limit $j\to\infty$, the full result being then derived from the symmetry $\lambdae{j}(\xi)=-\lambdae{-j}(-\xi)$. In this case the lemma follows from the fact that in terms of the Lax spectral parameter $\eta$ we have on one hand
\be\label{e:lambda-eta}
\frac{\lambda}{\iD|\eta|^{\tfrac32}}\ \stackrel{\eta\to-\infty}{=}\ 1\,+\,\cO\left(\frac{1}{|\eta|}\right)
\ee
and, on the other hand, 
$$
\iD(2\pi(j+1)+\xi)\ \stackrel{\eta\to-\infty}{=}\ -\frac{\lambda}{\uk\eta},+\,\cO\left(\frac{|\lambda|}{|\eta|^2}\right)
$$
so that
\be\label{e:j-eta}
\iD\uk(2\pi(j+1)+\xi)\ \stackrel{\eta\to-\infty}{=}\ |\eta|^{\tfrac12}\,+\,\cO\left(|\eta|^{-\tfrac12}\right)\,.
\ee
Note that here we have left implicit the dependence of $\lambda$ and $\eta$ on $(j,\xi)$. For the sake of concision in formulas, we shall do so repeatedly from now on.
\end{proof}

The foregoing lemma quantifies at the level of eigenvalues how close $L_\xi$ is from $-\uk^3(\d_x+\iD\xi)^3$. We now provides similar results at the level of eigenfunctions.

\bl\label{prop:ortho}
For any $s\in\N$, uniformly in $(\xi,x)\in(-\pi,\pi)\times\R$,
$$
\d_x^s\phie{j}(\xi,x)\ \stackrel{|j|\to\infty}{=}\ (2\iD\pi\,j)^s\,\phie{j}(\xi,x)\,+\,\cO(|j|^{s-1})\,.
$$
Uniformly in $(\xi,x)\in(-\pi,\pi)\times\R$,
$$
\tphie{j}(\xi,x)\ \stackrel{|j|\to\infty}{=}\ \phie{j}(\xi,x)\,+\,\cO(|j|^{-2})\,.
$$
\el

\begin{proof}
Again we may focus on the limit $j\to\infty$, relying this time on symmetries $\phie{j}(\xi,x)=\overline{\phie{-j}(-\xi,x)}$ and $\tphie{j}(\xi,x)=\overline{\tphie{-j}(-\xi,x)}$. The first expansion is then derived from the fact that $\phie{j}(\xi,x)$ equals
$$
\eD^{2\iD\pi\,x\,(j+1)}\ \times\ \left(1-\tfrac{\uk}{3}\lambda^{-1}\uU_x(x)\right)\ \times\ \exp\left(\frac{-\lambda}{\uk}\int_0^x\int_0^1\dfrac{\tfrac13(\uU(\theta)-\uU(\theta'))\ \dD \theta\ \dD \theta'}{(\eta-\uc+\tfrac13\uU(\theta))\,(\eta-\uc+\tfrac13\uU(\theta'))}\right)
$$
combined with estimates \eqref{e:lambda-eta}-\eqref{e:j-eta}. Along the same lines the second one follows readily from the fact that $\tphie{j}(\xi,x)$ equals
$$
\eD^{2\iD\pi\,x\,(j+1)}\ \times\ \left(1+
\dfrac{\uU(x)-\int_0^1\uU}{3(\eta-\uc)+\int_0^1\uU}\right)
\ \times\ \exp\left(\frac{-\lambda}{\uk}\int_0^x\int_0^1\dfrac{\tfrac13(\uU(\theta)-\uU(\theta'))\ \dD \theta\ \dD \theta'}{(\eta-\uc+\tfrac13\uU(\theta))\,(\eta-\uc+\tfrac13\uU(\theta'))}\right)
$$
combined with the foregoing expansions.
\end{proof}

\begin{proposition}\label{prop:tphi-expansion}
There exist a family of smooth functions of period one $(r_\ell)_{\ell\in\N}$ and a sequence of coefficients $(a_\ell(j,\xi))_{(\ell,j,\xi)\in\N\times\Z^*\times(-\pi,\pi)}$ such that for any $\ell\in\N$, uniformly in $\xi\in(-\pi,\pi)$
$$
a_\ell(j,\xi)\ \stackrel{|j|\to\infty}{=}\ \grandO\left(\frac{1}{|j|^\ell}\right)
$$
and for any $s\in\N$, there exists $R_s(j,\xi,x)$ such that
$$
\tphie{j}(\xi,x)\ =\ \sum_{\ell=0}^sa_\ell(j,\xi)\,\exp\left(2\iD\pi\,x\,j\right)\,r_\ell(x)\,+\,R_s(j,\xi,x)
$$
and uniformly in $(\xi,x)\in(-\pi,\pi)\times\R$
$$
R_s(j,\xi,x)\ \stackrel{|j|\to\infty}{=}\ \grandO\left(\frac{1}{|j|^{s+1}}\right)
$$
\end{proposition}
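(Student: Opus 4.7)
The natural strategy is to push the computation of Lemma~\ref{prop:ortho} to all orders. Start from the explicit formula of Subsection~\ref{s:spec} for $\tphie{j}$. The algebraic identity $\tfrac13(\uU(\theta)-\uU(\theta'))=U_0(\theta)-U_0(\theta')$ with $U_0(\theta):=\eta-\uc+\tfrac13\uU(\theta)$ rewrites the exponent as the sum of a linear piece $\iD(\xi+2\pi(j+1))\,x$ --- via the Floquet quantisation $-\lambda I_0/\uk=\iD(\xi+2\pi(j+1))$ with $I_0:=\int_0^1\dD\theta'/U_0(\theta')$ --- and a residual $H(\eta,x):=(\lambda/\uk)\int_0^x(1/U_0(\theta)-I_0)\,\dD\theta$ that is $1$-periodic in $x$ (its integrand has mean zero over $[0,1]$). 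Combined with the factor $\eD^{-\iD\xi x}$ already present in the formula for $\tphi$, this produces the plane wave $\eD^{2\iD\pi(j+1)x}=\eD^{2\iD\pi x}\eD^{2\iD\pi jx}$, so that
\[
\tphie{j}(\xi,x)\ =\ \eD^{2\iD\pi jx}\,\eD^{2\iD\pi x}\,F(\eta,x)\,\eD^{H(\eta,x)}
\]
with $F(\eta,x):=1+(\uU(x)-\int_0^1\uU)/(3(\eta-\uc)+\int_0^1\uU)$, and both factors $F(\eta,\cdot)$, $\eD^{H(\eta,\cdot)}$ smooth and $1$-periodic in $x$.

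\textbf{Asymptotic expansion.} The remaining task is to expand the $1$-periodic function $F\,\eD^H$ in powers of the small parameter $\mu:=1/|\eta|^{1/2}$, which by \eqref{e:j-eta} satisfies $\mu=(2\pi\uk|j|)^{-1}(1+\grandO(1/|j|))$ uniformly in $\xi\in(-\pi,\pi)$. Expanding $1/U_0(\theta)$ as a geometric series in $-V/\eta$ with $V:=-\uc+\tfrac13\uU$ yields
\[
\frac{1}{U_0(\theta)}-I_0\ =\ \sum_{n\geq1}\eta^{-n-1}\Big(\int_0^1 V^n-V(\theta)^n\Big),
\]
whose primitive from $0$ to $x$ is $1$-periodic. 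Combined with $\lambda/\eta^2=\grandO(\mu)$ from \eqref{e:lambda-eta}, this gives $H(\eta,\cdot)=\mu\,h_1+\mu^2\,h_2+\cdots$ with smooth $1$-periodic $h_n$ depending only on $\uU$. Expanding $F$ similarly as $F=1+\mu^2 f_1+\mu^4 f_2+\cdots$, Taylor expanding $\eD^H$, and collecting powers of $\mu$ yields
\[
\eD^{2\iD\pi x}\,F(\eta,x)\,\eD^{H(\eta,x)}\,\sim\,\sum_{\ell\geq0}\tilde a_\ell(\eta)\,r_\ell(x),
\]
with $\tilde a_\ell(\eta)=\grandO(\mu^\ell)$ and each $r_\ell$ smooth and $1$-periodic (the factor $\eD^{2\iD\pi x}$ being absorbed into each $r_\ell$). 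One then defines $a_\ell(j,\xi):=\tilde a_\ell(\eta(j,\xi))$ for $j\to+\infty$ and uses the symmetry $\tphie{j}(\xi,x)=\overline{\tphie{-j}(-\xi,x)}$ invoked in Lemma~\ref{prop:ortho} to extend to $j\to-\infty$.

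\textbf{Remainder and uniformity.} The residual $R_s$ is controlled by the integral form of Taylor's theorem applied to the smooth map $\mu\mapsto F\,\eD^H$ around $\mu=0$, combined with the uniform bound $|H(\eta,x)|\lesssim\mu$ for $(\xi,x)\in(-\pi,\pi)\times[0,1]$. This yields $R_s(j,\xi,x)=\grandO(\mu^{s+1})=\grandO(|j|^{-(s+1)})$ uniformly in $(\xi,x)\in(-\pi,\pi)\times\RR$, the uniformity following from the uniform-in-$\xi$ asymptotics of $\mu$ recalled above and from the fact that the periodic ingredients $h_n$, $f_n$ depend only on $\uU$. The main obstacle I anticipate is not conceptual but combinatorial: keeping track of the exact power of $\mu$ contributed by each term in the double expansion of $F\,\eD^H$, and reexpanding $\mu$ itself as a convergent series in $1/|j|$ uniformly in $\xi$, so that the cleanly ordered form $a_\ell(j,\xi)=\grandO(|j|^{-\ell})$ required by the statement is preserved.
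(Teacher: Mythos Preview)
Your approach is essentially the same as the paper's, and it is correct. The paper, however, packages the argument more efficiently and thereby sidesteps both obstacles you flag at the end. Instead of expanding $F$ and $\eD^H$ separately and then multiplying series, the paper defines a single smooth function $\Phi(\tau,x)$ of the variable $\tau=|\eta|^{-1/2}$ (your $\mu$) so that $\tphie{j}(\xi,x)=\eD^{2\iD\pi jx}\,\Phi(\mu,x)$; then it simply sets $a_\ell(j,\xi):=\mu^\ell$ and $r_\ell:=\tfrac{1}{\ell!}\partial_\tau^\ell\Phi(0,\cdot)$, with $R_s$ the integral Taylor remainder of $\Phi$ at order $s$. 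This eliminates any combinatorics. Your second worry---reexpanding $\mu$ as a series in $1/|j|$---is unnecessary: the statement only asks for $a_\ell(j,\xi)=\grandO(|j|^{-\ell})$, not that $a_\ell$ be a polynomial in $1/|j|$, and this follows directly from $a_\ell=\mu^\ell$ together with $\mu=\grandO(1/|j|)$ uniformly in $\xi$ (which is \eqref{e:j-eta}). A minor point: your $H$ carries the wrong sign.
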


\begin{proof}
Again by using symmetries we may restrict to $j\in\N^*$. To turn expansions near infinity into expansions near zero we introduce $\Phi(\tau,x)$ defined as 
$$
\begin{array}{l}
\displaystyle
\eD^{2\iD\pi\,x}\ \times\ \left(1+\tau^2
\dfrac{-\uU(x)+\int_0^1\uU}{3+3\tau^2\uc-\tau^2\int_0^1\uU}\right)\\[1em]
\displaystyle
\times\ \exp\left(\tau\frac{\sqrt{(1+\tau^2\eta_1)\,(1+\tau^2\eta_2)\,(1+\tau^2\eta_3)}}{\uk}\int_0^x\int_0^1\dfrac{\tfrac13(\uU(\theta)-\uU(\theta'))\ \dD \theta\ \dD \theta'}{(1+\tau^2\uc-\tfrac13\tau^2\uU(\theta))\,(1+\tau^2\uc-\tfrac13\tau^2\uU(\theta'))}\right)
\end{array}
$$
so that for any $(j,\xi)\in\N^*\times(-\pi,\pi)$
$$
\tphie{j}(\xi,x)\ =\ \eD^{2\iD\pi\,x\,j}\ \Phi\left(|\etae{j}(\xi)|^{-\frac12},x\right)\,.
$$
Note that $\Phi$ is smooth in both variables on $(-|\eta_1|^{-\frac12},|\eta_1|^{-\frac12})\times \R$ and periodic of period one in its second variable. The result is now obtained with functions
$$
r_\ell(x)\ =\ \frac{1}{\ell!}\ \d_\tau^\ell \Phi(0,x)\,,\qquad (\ell,x)\in\N^*\times\R\,,
$$
coefficients
$$
a_\ell(j,\xi)\ =\ |\etae{j}(\xi)|^{-\frac\ell2}\,,\qquad (\ell,j,\xi)\in\N\times\N^*\times(-\pi,\pi)\,,
$$
and remainders
$$
R_s(j,\xi,x)\ =\ |\etae{j}(\xi)|^{-\frac{s+1}{2}}\ 
\int_0^1\frac{\d_\tau^{s+1}\Phi(t\,|\etae{j}(\xi)|^{-\tfrac{1}{2}},x)}{s!}\,(1-t)^s\,\dD t\,.
$$
\end{proof}

In Subsection \ref{s:side-band}, we have exhibited the very special role played by $\uU_x$ in the spectral decomposition of $L_0$. To make the most of the associated cancellations for non zero Floquet exponents we then need a uniform control on how left eigenfunctions vary with $\xi$. The following expansion provides such a control.

\begin{proposition}\label{prop:tphi-xi-expansion}
There exist a family of smooth functions of period one $(r_\ell)_{\ell\in\N^*}$ and a sequence of coefficients $(b_\ell(j,\xi))_{(\ell,j,\xi)\in\N^*\times\Z^*\times(-\pi,\pi)}$ such that for any $\ell\in\N^*$, uniformly in $\xi\in(-\pi,\pi)$
$$
b_\ell(j,\xi)\ \stackrel{|j|\to\infty}{=}\ \grandO\left(\frac{|\xi|}{|j|^{\ell+1}}\right)
$$
and for any $s\in\N$, there exists $\tR_s(j,\xi,x)$ such that
$$
\tphie{j}(\xi,x)\ =\ \tphie{j}(0,x)
\,+\,\sum_{\ell=1}^sb_\ell(j,\xi)\,\exp\left(2\iD\pi\,x\,j\right)\,r_\ell(x)\,+\,\tR_s(j,\xi,x)
$$
and uniformly in $(\xi,x)\in(-\pi,\pi)\times\R$
$$
\tR_s(j,\xi,x)\ \stackrel{|j|\to\infty}{=}\ \grandO\left(\frac{|\xi|}{|j|^{s+2}}\right)
$$
\end{proposition}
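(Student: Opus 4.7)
The plan is to mimic closely the strategy of Proposition~\ref{prop:tphi-expansion} while tracking the extra smallness coming from the restriction $\tphie{j}(\xi,\cdot)-\tphie{j}(0,\cdot)$. Again by the symmetry $\tphie{j}(\xi,x)=\overline{\tphie{-j}(-\xi,x)}$ it is enough to treat $j\to+\infty$, and we keep the notation of that proof, writing
$$
\tphie{j}(\xi,x)\ =\ \eD^{2\iD\pi\,x\,j}\,\Phi(\tau(j,\xi),x)\,,\qquad \tau(j,\xi)\ :=\ |\etae{j}(\xi)|^{-\frac12}\,,
$$
with $\Phi$ the function introduced there, smooth on $(-|\eta_1|^{-\frac12},|\eta_1|^{-\frac12})\times\R$ and periodic of period one in $x$. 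The functions $r_\ell(x)=\tfrac{1}{\ell!}\d_\tau^\ell\Phi(0,x)$ appearing in the statement are then exactly the ones from Proposition~\ref{prop:tphi-expansion}.

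First I would establish the key estimate
$$
\tau(j,\xi)-\tau(j,0)\ =\ \grandO\!\left(\frac{|\xi|}{|j|^2}\right)\,,
$$
uniformly in $\xi\in(-\pi,\pi)$. The fastest route is implicit differentiation of the Floquet condition
$$
\textrm{Im}\left(-\frac{\lambda(\eta)}{\uk}\int_0^1\frac{\dd\theta}{\eta-\uc+\tfrac13\uU(\theta)}\right)\ \in\ \xi+2\pi\Z\,
$$
at fixed $j$: combining \eqref{e:lambda-eta}--\eqref{e:j-eta} one checks that the left-hand side has $\eta$-derivative of order $|\eta|^{-1/2}$ as $\eta\to-\infty$, so that $\d_\xi\etae{j}(\xi)=\grandO(|j|)$ uniformly and therefore $\d_\xi\tau(j,\xi)=\grandO(|j|^{-2})$; integrating in $\xi$ yields the claim.

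Next I would write the difference of interest as
$$
\tphie{j}(\xi,x)-\tphie{j}(0,x)\ =\ \eD^{2\iD\pi\,x\,j}\int_{\tau(j,0)}^{\tau(j,\xi)}\d_\tau\Phi(t,x)\,\dd t
$$
and Taylor-expand $\d_\tau\Phi(\cdot,x)$ at $t=0$ to order $s$. Integrating term by term produces
$$
\tphie{j}(\xi,x)\ =\ \tphie{j}(0,x)\ +\ \sum_{\ell=1}^{s}\frac{\tau(j,\xi)^\ell-\tau(j,0)^\ell}{\ell!}\,\eD^{2\iD\pi\,x\,j}\,\d_\tau^\ell\Phi(0,x)\cdot\ell!\cdot\tfrac{1}{\ell!}\ +\ \tR_s(j,\xi,x)\,,
$$
so the natural choice is
$$
b_\ell(j,\xi)\ :=\ \tau(j,\xi)^\ell-\tau(j,0)^\ell\,.
$$
Factoring $\tau(j,\xi)^\ell-\tau(j,0)^\ell=(\tau(j,\xi)-\tau(j,0))\sum_{m=0}^{\ell-1}\tau(j,\xi)^m\tau(j,0)^{\ell-1-m}$ and using Step~1 together with $\tau(j,\xi)=\grandO(|j|^{-1})$ gives the required bound $b_\ell(j,\xi)=\grandO(|\xi|/|j|^{\ell+1})$.

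Finally the remainder takes the explicit form
$$
\tR_s(j,\xi,x)\ =\ \eD^{2\iD\pi\,x\,j}\int_{\tau(j,0)}^{\tau(j,\xi)}\frac{t^{s}}{(s-1)!}\int_0^1(1-u)^{s-1}\,\d_\tau^{s+1}\Phi(ut,x)\,\dd u\,\dd t\,,
$$
which is bounded uniformly in $x$ by a constant times $\max(|\tau(j,\xi)|,|\tau(j,0)|)^s\cdot|\tau(j,\xi)-\tau(j,0)|=\grandO(|\xi|/|j|^{s+2})$, completing the proof. The main technical obstacle is Step~1: one must ensure that the $\grandO(|j|^{-1})$ control on $\d_\xi\tau(j,\xi)$ is \emph{uniform} in $\xi\in(-\pi,\pi)$, and in particular does not degenerate near the band edges $\xi=\pm\pi$, which is why the calculation really does rest on the explicit Lax parametrization rather than on abstract perturbation arguments.
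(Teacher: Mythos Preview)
Your proposal is correct and follows essentially the same approach as the paper: identical choice $b_\ell(j,\xi)=\tau(j,\xi)^\ell-\tau(j,0)^\ell$ with $\tau=|\etae{j}|^{-1/2}$, and the same key uniform estimate on $\tau(j,\xi)-\tau(j,0)$ (the paper phrases it equivalently as uniform convergence of $\d_\xi(|\etae{j}|^{1/2})$ to a constant, via the relation $\uk\xi=|\etae{j}(\xi)|^{1/2}J(|\etae{j}(\xi)|^{-1})-|\etae{j}(0)|^{1/2}J(|\etae{j}(0)|^{-1})$ for a smooth $J$ with $J(0)=1$). The only organizational difference is in the remainder: you write $\tphie{j}(\xi,x)-\tphie{j}(0,x)$ as $\int_{\tau(j,0)}^{\tau(j,\xi)}\d_\tau\Phi$ and Taylor-expand $\d_\tau\Phi$, yielding a single compact integral remainder, whereas the paper subtracts the two Taylor remainders of $\Phi$ coming from Proposition~\ref{prop:tphi-expansion} and splits the resulting difference into two pieces; your packaging is slightly tidier but the content is the same.
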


\begin{proof}
With $\Phi$ as in the foregoing proof the result is obtained with functions
$$
r_\ell(x)\ =\ \frac{1}{\ell!}\ \d_\tau^\ell \Phi(0,x)\,,\qquad (\ell,x)\in\N^*\times\R\,,
$$
coefficients
$$
b_\ell(j,\xi)\ =\ |\etae{j}(\xi)|^{-\frac\ell2}-|\etae{j}(0)|^{-\frac\ell2}\,,\qquad (\ell,j,\xi)\in\N^*\times\N^*\times(-\pi,\pi)\,,
$$
and remainders $\tR_s(\xi,x)$ given by
$$
\begin{array}{l}
\displaystyle
\left(|\etae{j}(\xi)|^{-\frac{s+1}{2}}-|\etae{j}(0)|^{-\frac{s+1}{2}}\right)\ 
\int_0^1\frac{\d_\tau^{s+1}\Phi(t\,|\etae{j}(\xi)|^{-\tfrac{1}{2}},x)}{s!}\,(1-t)^s\,\dD t\\[1em]
\displaystyle
+\ |\etae{j}(0)|^{-\frac{s+1}{2}}\,\left(|\etae{j}(\xi)|^{-\frac{1}{2}}-|\etae{j}(0)|^{-\frac{1}{2}}\right)\\[1em]
\displaystyle
\qquad\times\int_0^1\int_0^1\frac{\d_\tau^{s+2}\Phi(t\,(|\etae{j}(0)|^{-\tfrac{1}{2}}+\sigma\,(|\etae{j}(\xi)|^{-\tfrac{1}{2}}-|\etae{j}(0)|^{-\tfrac{1}{2}}),x)}{s!}\,t\,(1-t)^s\,\dD\sigma\,\dD t
\end{array}
$$
since, as follows from an elementary computation, $\d_\xi(|\etae{j}|^{\tfrac12})$ converges uniformly on $(-\pi,\pi)$ to $\uk^{-1}$ as $j\to\infty$. Indeed for some explicit smooth function $J$ satisfying $J(0)=1$ stands
$$
\uk\xi\ =\ |\etae{j}(\xi)|^{\frac{1}{2}}\ J(|\etae{j}(\xi)|^{-1})-|\etae{j}(0)|^{\frac{1}{2}}\ J(|\etae{j}(0)|^{-1})\,.
$$
\end{proof}

\subsection{Kernel representation}\label{s:Green}

We shall prove dispersive decay by estimating spatial representation of the solution. To do so we need to convert the foregoing purely spectral description in terms of Green kernels. 

The first easy key observation is that expanding Bloch transform definition \eqref{Bloch} yields for any smooth periodic function $\tphi$, any smooth localized $g$ and any $\xi\in(-\pi,\pi)$
$$
\langle \tphi;\check{g}(\xi,\cdot)\rangle
\ =\ \int_\R \eD^{-\ii\xi z}\ \overline{\tphi(z)}\,g(z)\ \dD\xi
$$
since Fourier series inversion formula provides for any $z$
$$
\frac{1}{2\pi}\sum_{\ell\in\Z}\int_0^1\eD^{-2\iD\pi\ell(y-z)}\,\tphi(y)\,\dD y
\,=\, \tphi(z)\,.
$$

With the purpose of analyzing oscillatory integrals where amplitudes are separated from oscillations we also pull out from Floquet right and left eigenfunctions some oscillating factors. This factorization allows for a consistent gluing of these Floquet eigenfunctions. Therefore for $(j,\xi)\in \Z\times [-\pi,\pi)\setminus\{(0,0)\}$ we set for any $x\in\R$
$$
\Lambdae(2\pi\,j+\xi)\,=\,\lambdae{j}(\xi)\,,\quad
\Phie(2\pi\,j+\xi,x)\,=\,\phie{j}(\xi,x)\,\eD^{-2\iD\pi\,j\,x}\,,\quad
\tPhie(2\pi\,j+\xi,x)\,=\,\tphie{j}(\xi,x)\,\eD^{-2\iD\pi\,j\,x}\,.
$$
Likewise for $\xi\in[-\pi,0)\cup(0,\pi)$ and $x\in\R$ we set
$$
\Lambdap(\xi)\,=\,\lambdap{1}(\xi)\,,\qquad
\Phip(\xi,x)\,=\,\phip{1}(\xi,x)\,,\qquad
\tPhip(\xi,x)\,=\,\tphip{1}(\xi,x)\,,
$$
and for $(j,\xi)\in\{1\}\times[-\pi,0)\cup\{-1\}\times(0,\pi)$
$$
\Lambdap(2\pi\,j+\xi)\,=\,\lambdap{2}(\xi)\,,\quad
\Phip(2\pi\,j+\xi,x)\,=\,\phip{2}(\xi,x)\,\eD^{-2\iD\pi\,j\,x}\,,\quad
\tPhip(2\pi\,j+\xi,x)\,=\,\tphip{2}(\xi,x)\,\eD^{-2\iD\pi\,j\,x}\,.
$$
Accordingly we build a "glued" version of the cut-off function introduced in Section~\ref{s:side-band} by setting
$$
\tchi\,=\,\chi\,+\,\chi(\cdot+2\pi)\,+\,\chi(\cdot-2\pi)\,.
$$

After this preparation we may define in a distributional\footnote{It turns out that this actually defines a bounded function $\Ke(t,\cdot,\cdot)$ when $t\neq0$.} sense 
$$
\Ke(t,x,y)\,=\,\int_\R 
\eD^{\Lambdae(\xi)\,t\,+\iD\,\xi\,(x-y)}\,(1-\chi(\xi))\,
\Phie(\xi,x)\,\overline{\tPhie(\xi,y)}\,\dD \xi
$$
and in a point-wise sense
$$
\Kp(t,x,y)\,=\,\int_{-2\pi}^{2\pi} 
\eD^{\Lambdap(\xi)\,t\,+\iD\,\xi\,(x-y)}\,(1-\tchi(\xi))\,
\Phip(\xi,x)\,\overline{\tPhip(\xi,y)}\,\dD \xi
$$
and for $j\in\{0,1,2\}$
$$
\Ko{j}(t,x,y)\,=\,\int_{-\pi}^{\pi} 
\eD^{\lambdap{j}(\xi)\,t\,+\iD\,\xi\,(x-y)}\,\chi(\xi)\ 
\frac{\phi_{j}(\xi,x)-\beta_{1}^{(j)}(\xi)\,\uU_x(x)}{\iD\uk\xi}\ \overline{\tphip{j}(\xi,y)}\,\dD \xi\,.
$$
This ensures that for any smooth and localized $g$ and any $(t,x)$
$$
\tS(t)(g)(x)\,=\,\int_\R\,\Big(\Ke(t,x,y)+\Kp(t,x,y)\,+\sum_{j\in\{0,1,2\}}\Ko{j}(t,x,y)\Big)\,g(y)\,\dD y\,.
$$

We need a similar description for the action on $\psi\uU_x$ in terms of $\psi_x$. It follows from the discussion in Section~\ref{s:slow-reduction} that we may focus on the case where $\psi$ is low-frequency and centered. To quantify the corresponding gain we choose a symmetric cut-off function $\chi_0$ such that $\textrm{supp }\chi\subset\textrm{supp}(1-\chi_0)$ and $\textrm{supp }\chi_0\subset (-\pi,\pi)$. Then if $\psi$ is centered, low-frequency in the sense that $\textrm{supp }\hat{\psi}\subset\textrm{supp }\chi$ and such that $\d_x\psi$ is localized, then for any $(t,x)$
$$
\tS(t)(\psi\,\uU_x)(x)\,=\,\int_\R\,\Big(\Ge(t,x,y)+\Gp(t,x,y)\,+\sum_{j\in\{0,1,2\}}\Go{j}(t,x,y)\Big)\,\psi_x(y)\,\dD y
$$
where $\Ge$, $\Gp$ and $\Go{}$ are defined in a point-wise way respectively by
\be
\Ge(t,x,y)\,=\,\sum_{j\in\Z}\int_{-\pi}^\pi 
\eD^{\lambdae{j}(\xi)\,t\,+\iD\,\xi\,(x-y)\,+2\iD\pi j\,x}\,\Ae{j}(\xi,x)\,\dD \xi
\ee
with amplitudes
$$
\Ae{j}(\xi,x)\,=\,\chi_0(\xi)\,(1-\chi(2\,\pi j+\xi))\,\Phie(2\pi\,j+\xi,x)\,
\left\langle \eD^{2\iD\pi j\,\,\cdot\,}\,\tfrac{\tPhie(2\pi\,j+\xi,\,\cdot\,)-\tPhie(2\pi\,j,\,\cdot\,)}{-\iD\xi} ;\uU_x \right\rangle
$$
by
\be
\Gp(t,x,y)\,=\,\sum_{j\in\{1,2\}}\int_{-\pi}^{\pi} 
\eD^{\lambdap{j}(\xi)\,t\,+\iD\,\xi\,(x-y)}\,\Ap{j}(\xi,x)\,\dD \xi
\ee
with amplitudes
$$
\Ap{j}(\xi,x)\,=\,
\,\chi_0(\xi)(1-\chi(\xi))\,
\phip{j}(\xi,x)\,\left\langle\tfrac{\tphip{j}(\xi,\,\cdot\,)-\tphip{j}(0,\,\cdot\,)}{-\iD\xi};\uU_x\right\rangle
$$
and for $j\in\{0,1,2\}$ by
\be
\Go{j}(t,x,y)\,=\,\int_{-\pi}^{\pi} 
\eD^{\lambdap{j}(\xi)\,t\,+\iD\,\xi\,(x-y)}\,\Ao{j}(\xi,x)\,\dD \xi
\ee
with amplitude
$$
\Ao{j}(\xi,x)\,=\,\chi(\xi)\,
\frac{\phi_{j}(\xi,x)-\beta_{1}^{(j)}(\xi)\,\uU_x(x)}{\iD\uk\xi}\,\left\langle\tfrac{\tphip{j}(\xi,\,\cdot\,)-\tphip{j}(0,\,\cdot\,)}{-\iD\xi};\uU_x\right\rangle\,.
$$

For easy reference we also state within our new set of notation corollaries of the analysis carried out along the proof of Propositions~\ref{prop:tphi-expansion} and~\ref{prop:tphi-xi-expansion} 
and also needed in the study of foregoing kernels.

\begin{proposition}
$\Phie$, $\tPhie$ and $\d_\xi \tPhie$ are uniformly bounded on $\textrm{supp}(1-\chi)\times\R$ and, uniformly in $x\in\R$,
$$
|\d_\xi\Phie(\xi,x)|\,+\,|\d_\xi\tPhie(\xi,x)|\,+\,|\d_\xi^2\tPhie(\xi,x)|
\stackrel{|\xi|\to\infty}{=}\ \grandO\left(|\xi|^{-2}\right)\,.
$$
\end{proposition}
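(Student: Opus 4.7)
The plan is to handle separately the compact part and the unbounded tail $|\xi|\to\infty$ of $\textrm{supp}(1-\chi)$. Since $\Phie$ and $\tPhie$ are periodic of period one in $x$ and continuous in $\xi$ away from the origin (as follows from the explicit formulas of Subsection~\ref{s:spec}), they, together with the $\xi$-derivatives that appear in the statement, are uniformly bounded on any compact part of $\textrm{supp}(1-\chi)\times\R$. It thus only remains to handle the asymptotic $|\xi|\to\infty$.

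To this end I parametrize $\xi=2\pi j+\tilde\xi$ with $\tilde\xi\in[-\pi,\pi)$, so that $|\xi|\to\infty$ corresponds to $|j|\to\infty$ while $|\etae{j}(\tilde\xi)|^{1/2}\sim\uk|\xi|$. Reusing the change of variable performed in the proof of Proposition~\ref{prop:tphi-expansion}, the explicit formulas of Subsection~\ref{s:spec} furnish a function $\Phi(\tau,x)$, smooth in $(\tau,x)$ on a fixed neighborhood of $\{0\}\times\R$, periodic of period one in $x$, and satisfying $\Phi(0,x)=\eD^{2\iD\pi x}$, such that $\tPhie(2\pi j+\tilde\xi,x)=\Phi(|\etae{j}(\tilde\xi)|^{-1/2},x)$. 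An entirely analogous rewriting of the formula for $\phie{j}$ yields a smooth $\tilde\Phi(\tau,x)$ with $\Phie(2\pi j+\tilde\xi,x)=\eD^{2\iD\pi x}\tilde\Phi(|\etae{j}(\tilde\xi)|^{-1/2},x)$ and $\tilde\Phi(0,x)=1$. This immediately yields uniform boundedness of $\Phie$ and $\tPhie$ on the tail. Consistency of the global parametrization at the gluing points $\xi\in\pi(2\ZZ+1)$ is checked from the explicit formulas by noting that $L_{\xi+2\pi}(\eD^{-2\iD\pi x}f)=\eD^{-2\iD\pi x}(L_\xi f)$ for any periodic $f$, which forces $\phie{j+1}(-\pi,\cdot)=\eD^{2\iD\pi\,\cdot\,}\phie{j}(\pi,\cdot)$, and similarly for the $\tphie{}$'s.

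For the $\xi$-derivatives I would proceed by the chain rule: $\d_\xi\tPhie=(\d_\tau\Phi)(|\eta|^{-1/2},x)\cdot\d_\xi|\eta|^{-1/2}$, and similarly for $\d_\xi\Phie$ and $\d_\xi^2\tPhie$. Quantitative control of the $\xi$-derivatives of $|\eta|^{-1/2}$ stems from the identity
$$
\uk\,\xi\ =\ |\etae{j}(\tilde\xi)|^{1/2}\,J(|\etae{j}(\tilde\xi)|^{-1})\ -\ |\etae{j}(0)|^{1/2}\,J(|\etae{j}(0)|^{-1})
$$
already used at the end of the proof of Proposition~\ref{prop:tphi-xi-expansion}, with $J$ smooth near zero and $J(0)=1$. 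Implicit differentiation yields $\d_\xi|\eta|^{1/2}=\uk(1+\grandO(|\eta|^{-1}))$, hence $\d_\xi|\eta|^{-1/2}=\grandO(|\eta|^{-1})=\grandO(|\xi|^{-2})$, and a further differentiation provides $\d_\xi^2|\eta|^{-1/2}=\grandO(|\eta|^{-3/2})=\grandO(|\xi|^{-3})$. Combined with the uniform boundedness of the $\tau$-derivatives of $\Phi$ and $\tilde\Phi$ on a fixed neighborhood of $\tau=0$, the chain rule delivers $|\d_\xi\tPhie(\xi,x)|+|\d_\xi\Phie(\xi,x)|=\grandO(|\xi|^{-2})$ and $|\d_\xi^2\tPhie(\xi,x)|=\grandO(|\xi|^{-3})$, the latter strictly stronger than the required $\grandO(|\xi|^{-2})$. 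Uniform boundedness of $\d_\xi\tPhie$ on the whole of $\textrm{supp}(1-\chi)\times\R$ follows at once.

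No serious obstacle is expected: the proposition is essentially a repackaging of the tools developed in Propositions~\ref{prop:tphi-expansion} and~\ref{prop:tphi-xi-expansion}. The main bookkeeping care concerns the two distinct contributions $(\d_\xi|\eta|^{-1/2})^2=\grandO(|\xi|^{-4})$ and $\d_\xi^2|\eta|^{-1/2}=\grandO(|\xi|^{-3})$ arising in the chain rule for $\d_\xi^2\tPhie$, together with the consistency of the global parametrization at the junctions $\xi\in\pi(2\ZZ+1)$; both verifications are routine.
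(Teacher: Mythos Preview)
Your proposal is correct and follows exactly the route the paper indicates: the paper gives no separate proof for this proposition but merely states it as a corollary of the analysis carried out in the proofs of Propositions~\ref{prop:tphi-expansion} and~\ref{prop:tphi-xi-expansion}, and you have faithfully unpacked what that means, namely writing $\tPhie$ and $\Phie$ through the smooth reparametrization $\tau=|\eta|^{-1/2}$ and combining the chain rule with the control $\d_\xi|\eta|^{1/2}=\uk(1+\grandO(|\eta|^{-1}))$ already established there.
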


At last representations corresponding to $\spp$ may also be obtained. Explicitly, for $j\in\{0,1,2\}$, when $g$ is smooth and localized 
$$
\uk\d_x\spp_j(t)(g)(x)\,=\,
\eD_1\cdot\int_\R\ \ko{j}(t,x,y)\ g(y)\ \dD y
$$
with 
$$
\ko{j}(t,x,y)\,=\,\int_{-\pi}^\pi \eD^{\iD\xi(x-y)+\lambda_j(\xi)\,t}\,\chi(\xi)\ \beta^{(j)}(\xi)\ 
\overline{\tphi_j(\xi,y)}\ \dD \xi\,;
$$
and, when $\psi$ is centered, $\d_x\psi$ is localized and $\textrm{supp }\hat{\psi}\subset\textrm{supp }\chi$
$$
\uk\d_x\spp_j(t)(\psi\,\uU_x)\,=\,
\eD_1\cdot\int_\R\ \go{j}(t,x,y)\ \psi_x(y)\ \dD y
$$
with
$$
\go{j}(t,x,y)\,=\,\int_{-\pi}^{\pi} 
\eD^{\iD\,\xi\,(x-y)\,+\,\lambda_j(\xi)\,t}\,
\chi(\xi)\ \beta^{(j)}(\xi)\ 
\left\langle\tfrac{\tphi_j(\xi,\,\cdot\,)-\tphi_j(0,\,\cdot\,)}{-\iD\xi};\uU_x\right\rangle\,\dD \xi\,.
$$


\section{Stability estimates}\label{s:stability}

Here begins the core of our proofs.

\subsection{Reduction to centered slow space modulation}\label{s:slow-reduction}

We first show that as far as stability results in $X=W^{s,p}(\R)$, $(s,p)\in\N\times[1,\infty]$, are concerned we may replace the definition \eqref{sm_norm} with 
\be\label{sm_norm_lf}
N_X(W)\ =\ \inf_{\substack{W=V+\uU_x\psi\\\psi(\infty)=-\psi(-\infty)
\\\psi \textrm{ is low frequency}}} 
\|V\|_{X}\ +\ \|\psi_x\|_{X}\,.
\ee

By $\psi(\infty)=-\psi(-\infty)$ we mean that $\psi=\d_x^{-1}\psi_x$, where $\d_x^{-1}$ is defined as a principal value on the Fourier side. Obviously this constraint does not restrict potential applications of linear estimates to a nonlinear analysis since at the nonlinear level it could be achieved initially by translating $\uU$ by $\tfrac12(\psi_0(\infty)+\psi_0(-\infty))$. However it may also be achieved directly at the linear level since $S(t)$ commutes with translation by a constant multiple of $\uU_x$.

As for being low-frequency, it means here having a Fourier transform supported in $(-\pi,\pi)$. We shall exhibit decompositions that satisfy this extra constraint so the only thing to be proved here is that one may replace any decomposition $V_0+\uU_x\psi_0$ with a decomposition $\tV_0+\uU_x\tpsi_0$ satisfying the extra constraint and such that
$$
\|\tV_0\|_{X}\ +\ \|(\tpsi_0)_x\|_{X}\ \leq\ C\,(\|V_0\|_{X}\ +\ \|(\psi_0)_x\|_{X})
$$
when $X$ is any $W^{s,p}(\R)$ and with a constant $C$ depending only on $X$. To do so we introduce localizations in frequencies $(\psi_0)^{LF}$ and $(\psi_0)^{HF}$ defined on the Fourier side by, for any $\xi\in\R$,
$$
\widehat{(\psi_0)^{LF}}(\xi)\ =\ \chi(\xi)\,\widehat{\psi_0}(\xi)
\qquad\textrm{and}\qquad
\widehat{(\psi_0)^{HF}}(\xi)\ =\ (1-\chi(\xi))\,\widehat{\psi_0}(\xi)
$$
where $\chi$ is a suitable symmetric cut-off function. Then we set
$$
\tV_0\ =\ V_0\,+\,(\psi_0)^{HF}\uU_x
\qquad\textrm{and}\qquad
\tpsi_0\ =\ (\psi_0)^{LF}\,.
$$
This achieves a suitable decomposition since both $\d_x(\psi_0)^{LF}$ and $(\psi_0)^{HF}$ are obtained from $\d_x\psi_0$ by a convolution with a $L^1$ function as both $\xi\mapsto \chi(\xi)$ and $\xi\mapsto (1-\chi(\xi))\,\xi^{-1}$ lie in $H^1(\R)$ and $x\mapsto (1+|x|)^{-1}$ is square-integrable.

\subsection{Bounded stability}\label{s:bounded-stability}

Since the evolution is unitary in coordinates 
$$(\langle\tphie{j}(\xi,\cdot);\check{W}(\xi,\cdot)\rangle)_{(j,\xi)\in\Z^*\times(-\pi,\pi)}\,,\qquad(\langle\tphi_{j}(\xi,\cdot);\check{W}(\xi,\cdot)\rangle)_{(j,\xi)\in\{0,1,2\}\times(-\pi,\pi)}\,,$$ 
Theorem~\ref{th:bounded} can be proved by showing that $N_{H^s(\R)}$ may be equivalently written in terms of those coordinates.

To do so we define, for $s\in\R_+$, $\|\,\cdot\,\|_{X^s}$ through 
$$
\begin{array}{rcl}\displaystyle
\|W\|_{X^s}^2&=&\displaystyle
\left\|(j,\xi)\ \mapsto\ (2\pi\,j)^{s}\left\langle\tphie{j}(\xi,\cdot);\check{W}(\xi,\cdot)\right\rangle\right\|_{\ell^2(\Z^*;L^2([-\pi,\pi]))}^2
\\[1em]
&&\displaystyle
\quad+\quad\left\|(j,\xi)\ \mapsto\ \left\langle\tphi_j(\xi,\cdot);\check{W}(\xi,\cdot)\right\rangle\right\|_{\ell^2(\{0,1,2\};L^2([-\pi,\pi]))}^2\,.
\end{array}
$$
Theorem~\ref{th:bounded} follows from the following proposition.

\begin{proposition}\label{prop:bounded}
For any $s\in\N$ there exist positive $C$ and $C'$ such that 
$$
C\,\|\cdot\|_{X^s}\ \leq\ N_{H^s(\R)}(\cdot)\ \leq\ C'\,\|\cdot\|_{X^s}\,.
$$
\end{proposition}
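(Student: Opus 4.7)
The strategy is to show that for each Floquet parameter $\xi$ the biorthogonal families $\{\phi_j(\xi,\cdot),\phie{j}(\xi,\cdot)\}$ and $\{\tphi_j(\xi,\cdot),\tphie{j}(\xi,\cdot)\}$ form a Riesz basis of $L_{per}^2((0,1))$ in which, via Bloch-Plancherel and the large-$j$ asymptotics $\tphie{j}(\xi,x)\sim\eD^{2\iD\pi j x}$ supplied by Proposition~\ref{prop:tphi-expansion}, the $X^s$ norm records a weighted Sobolev-type expansion whose three slow coordinates are singled out precisely to account for the modulation contribution $\uU_x\psi$.

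For the lower bound $\|W\|_{X^s}\leq C\,N_{H^s(\R)}(W)$, I apply the triangle inequality to an arbitrary admissible decomposition $W = V + \uU_x\psi$ (with $\psi$ low-frequency and centered) and separately bound $\|V\|_{X^s}\leq C\|V\|_{H^s(\R)}$ and $\|\uU_x\psi\|_{X^s}\leq C\|\psi_x\|_{H^s(\R)}$. The first estimate is a straightforward comparison of weighted Bloch-Fourier coefficients using Proposition~\ref{prop:tphi-expansion} and Bloch-Plancherel. The second, more delicate estimate exploits the low-frequency condition, which gives $\check{(\uU_x\psi)}(\xi,x) = \hat\psi(\xi)\,\uU_x(x)$ on the support of $\chi$, and two structural cancellations: for the slow modes, the desingularized form of $\tphi_j(0,\cdot)$ is a combination of $\tq_1(0,\cdot)=\one$ and $\tq_2(0,\cdot)=\uU$, both of which pair with $\uU_x$ to zero by periodicity, so $\langle\tphi_j(\xi,\cdot);\uU_x\rangle = \grandO(|\xi|)$, and the centering condition $\hat\psi(\xi) = \widehat{\psi_x}(\xi)/(\iD\xi)$ absorbs this factor; for the essential modes with $j\neq 0$, $\tphie{j}(0,\cdot)$ is an eigenfunction of $L_0^*$ at a nonzero eigenvalue while $\uU_x\in\ker L_0$, so biorthogonality between distinct spectral projections yields $\langle\tphie{j}(0,\cdot);\uU_x\rangle = 0$, and Proposition~\ref{prop:tphi-xi-expansion} pushed to order $s$ then produces $\langle\tphie{j}(\xi,\cdot);\uU_x\rangle = \grandO(|\xi|/|j|^{s+2})$, the intermediate contributions $b_\ell(j,\xi)\,\langle\eD^{2\iD\pi j\cdot}r_\ell;\uU_x\rangle$ being negligible thanks to rapid decay of Fourier coefficients of the smooth periodic functions $\overline{r_\ell}\uU_x$. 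The $(2\pi j)^s$ weight is thus beaten by $|j|^{-2}$ and a trivial summation controls everything by $\|\psi_x\|_{L^2(\R)}^2$.

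For the upper bound $N_{H^s(\R)}(W)\leq C'\|W\|_{X^s}$, I use the canonical splitting \eqref{e:spp+tS} to define $\psi = \eD_1\cdot\spp(0)(W)$ and $V = \tS(0)(W)$. The cut-off $\chi$ built into \eqref{e:spp} makes $\psi$ low-frequency, while the duality relations between $\{\beta^{(j)}(0)\}$ and $\{\tbeta^{(j)}(0)\}$ combined with $\tq_1(0,\cdot)=\one$, $\tq_2(0,\cdot)=\uU$ render $\psi$ centered, up to a finite-dimensional correction that can be reabsorbed into $V$. The control $\|\psi_x\|_{H^s(\R)}\leq C\|\psi_x\|_{L^2(\R)}\leq C\|W\|_{X^s}$ then follows from the low-frequency support of $\psi$ together with Cauchy-Schwarz over $j\in\{0,1,2\}$, while $\|V\|_{H^s(\R)}\leq C\|W\|_{X^s}$ is obtained by inspecting the Bloch symbols \eqref{e:tS} and applying Proposition~\ref{prop:tphi-expansion} now to $\phie{j}$: the identification $\d_x^s\phie{j}(\xi,x) = (2\iD\pi j)^s\phie{j}(\xi,x) + \grandO(|j|^{s-1})$ combined with Bloch-Plancherel matches the weight $(2\pi j)^s$ in $\|W\|_{X^s}$ to $\|\d_x^s V\|_{L^2(\R)}$; the slow-Floquet low-frequency piece of $\tSo$ is tamed by the analyticity at $\xi = 0$ of $(\phi_j(\xi,\cdot)-\beta_1^{(j)}(\xi)\uU_x)/(\iD\uk\xi)$.

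The main obstacle is the high-index essential cross-term $\langle\tphie{j}(\xi,\cdot);\uU_x\rangle$ in the lower bound: squeezing a decay of order $|\xi|/|j|^{s+2}$ requires simultaneously the vanishing at $\xi=0$ coming from orthogonality with the generalized kernel of $L_0$ and the full strength of Proposition~\ref{prop:tphi-xi-expansion} pushed to arbitrary order $s$, with the oscillatory correction terms absorbed by recognizing them as Fourier coefficients of smooth periodic functions at large $j$. A subtler but more mechanical point is handling the centering of $\psi$ in the upper bound construction, which ultimately rests on the fact that $\eD_1\cdot\sum_j\beta^{(j)}(0)\,\langle\tphi_j(0,\cdot);\cdot\rangle$ annihilates the leading obstruction.
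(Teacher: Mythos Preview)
Your proposal is correct and follows essentially the same route as the paper's proof: the same canonical splitting $\psi=\eD_1\cdot\spp(0)W$, $V=\tS(0)W$ for the upper bound, and the same orthogonality $\langle\tphie{j}(0,\cdot);\uU_x\rangle=0$ combined with Proposition~\ref{prop:tphi-xi-expansion} for the lower bound. The only cosmetic differences are that the paper makes the Riesz-basis step explicit via the near-orthonormality identity $\|\sum_j\phie{j}c_j\|^2=\sum_j|c_j|^2+\langle\sum_j(\phie{j}-\tphie{j})c_j;\sum_j\phie{j}c_j\rangle$, and that the asymptotic $\d_x^s\phie{j}=(2\iD\pi j)^s\phie{j}+\grandO(|j|^{s-1})$ you invoke lives in Lemma~\ref{prop:ortho}, not Proposition~\ref{prop:tphi-expansion}.
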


\begin{proof}
As follows from the discussion in Subsection~\ref{s:slow-reduction}, we may safely use definition in \eqref{sm_norm_lf}. We first prove the right-hand side inequality and starts by the case $s=0$. Since 
$$
\Id=S(0)=\uU_x\eD_1\cdot\spp(0)+\tS(0)
$$ 
and $\spp(0)$ provides centered low-frequency phases we only need to prove that there exists some constant $C$ such that 
$$
\begin{array}{rcl}
\|\tS(0)(W_0)\|_{L^2(\R)}&\leq&C\,\|W_0\|_{X^0}\\[1em]
\|\d_x\spp(0)(W_0)\|_{L^2(\R)}&\leq&C\,\|W_0\|_{X^0}\,.
\end{array}
$$
The latter inequality stems directly from classical Parseval identity with
$$
C\,=\,\sqrt{2\pi}\,\sqrt{3}\,\uk^{-1}\max_{(j,\xi)\{0,1,2\}\times[-\xi_0,\xi_0]}|\beta^{(j)}(\xi)|\,.
$$
Likewise  $\|\tSo(0)(W_0)\|_{L^2(\R)}$ is bounded by $C\,\|W_0\|_{X^0}$ with
$$
C\,=\,\sqrt{2\pi}\,\sqrt{3}\,\uk^{-1}\left[\max_{\substack{j\in\{0,1,2\}\\\xi\in[-\pi,\pi]}}|\chi'(\xi)|^2\|\phi_j(\xi,\cdot)\|_{L^2((0,1))}^2+\max_{\substack{j\in\{0,1,2\}\\\xi\in[-\pi,\pi]}}|\xi|^{-2}\|\phi_j(\xi,\cdot)-\beta_1^{(j)}(\xi)\uU_x\|_{L^2((0,1))}^2\right]^{1/2}
$$
by using Parseval identity for the Bloch transform. There only remains to bound $\|\tSe(0)(W_0)\|_{L^2(\R)}$. Applying again Parseval identity the result would follow from bounding $\|(\tSe(0)(W_0))\check{\ }\,(\xi,\cdot)\|_{L^2((0,1))}$ by a multiple uniform in $\xi$ of $\|j\,\mapsto\,\langle\tphie{j}(\xi,\cdot);\check{W_0}(\xi,\cdot)\rangle\|_{\ell^2(\Z^*)}$. Now for any $\xi\in[-\pi,\pi]$ bi-orthogonality relations imply
$$
\begin{array}{l}\ds
\|(\tSe(0)(W_0))\check{\ }\,\ds(\xi,\cdot)\|_{L^2((0,1))}^2\\[1em]\ds
\,=\,
\sum_{j\in\Z^*}\left|\left\langle\tphie{j}(\xi,\cdot);\check{W_0}(\xi,\cdot)\right\rangle\right|^2\\
\ds\hspace{4em}
+\,\left\langle\sum_{j\in\Z^*}(\phie{j}(\xi,\cdot)-\tphie{j}(\xi,\cdot))\left\langle\tphie{j}(\xi,\cdot);\check{W_0}(\xi,\cdot)\right\rangle;
(\tSe(0)(W_0))\check{\ }\,(\xi,\cdot)\right\rangle\\[1em]
\ds\leq
\sum_{j\in\Z^*}|\langle\tphie{j}(\xi,\cdot);\check{W_0}(\xi,\cdot)\rangle|^2
+\,C_0
\big(\sum_{j\in\Z^*}|\langle\tphie{j}(\xi,\cdot);\check{W_0}(\xi,\cdot)\rangle|^2\big)^{1/2}
\|(\tSe(0)(W_0))\check{\ }\,(\xi,\cdot)\|_{L^2((0,1))}
\end{array}
$$
with
$$
C_0\ =\ \sup_{\zeta\in[-\pi,\pi]} \big(\sum_{j\in\Z^*}\|\phie{j}(\zeta,\cdot)-\tphie{j}(\zeta,\cdot)\|_{L^2((0,1))}^2\big)^{1/2}
$$
that is indeed finite by Proposition~\ref{prop:ortho}. This concludes the proof of the right-hand inequality when $s=0$.

We now explain how to extend the foregoing analysis to general $s$. A slight variation on above arguments show that actually both $\|\d_x\spp(0)(W_0)\|_{H^s(\R)}$ and $\|\tSo(0)(W_0)\|_{H^s(\R)}$ are bounded by a multiple of $\|W_0\|_{X^0}$ so that we may focus on bounding  $\|\tSe(0)(W_0)\|_{H^s(\R)}$. To rely on Parseval identity we aim at bounding $\|(\d_x+\iD\xi)^s(\tSe(0)(W_0))\check{\ }\,(\xi,\cdot)\|_{L^2((0,1))}$. To do so we expand
$$
\begin{array}{l}\ds
\|(\d_x+\iD\xi)^s(\tSe(0)(W_0))\check{\ }\,(\xi,\cdot)\|_{L^2((0,1))}\\[1em]\ds
\,\leq\,
\left\|\sum_{j\in\Z^*}(2\pi\iD j+\iD\xi)^s\phie{j}(\xi,\cdot)\left\langle\tphie{j}(\xi,\cdot);\check{W_0}(\xi,\cdot)\right\rangle\right\|_{L^2((0,1))}
\\
\ds
+\,\left\|\sum_{j\in\Z^*}((\d_x+\iD\xi)^s\phie{j}(\xi,\cdot)-(2\pi\iD j+\iD\xi)^s\phie{j}(\xi,\cdot))\left\langle\tphie{j}(\xi,\cdot);\check{W_0}(\xi,\cdot)\right\rangle\right\|_{L^2((0,1))}\,.
\end{array}
$$
It follows from the $s=0$ analysis that the first term on the right-hand side inequality is bounded by a multiple of $\|j\,\mapsto\,(2\pi\iD j+\iD\xi)^s\langle\tphie{j}(\xi,\cdot);\check{W_0}(\xi,\cdot)\rangle\|_{\ell^2(\Z)}$. Moreover Proposition~\ref{prop:ortho} yields that the second one is also bounded by a multiple of $\|j\,\mapsto\,|j|^s\langle\tphie{j}(\xi,\cdot);\check{W_0}(\xi,\cdot)\rangle\|_{\ell^2(\Z)}$. This is sufficient to conclude the proof of the second inequality in Proposition~\ref{prop:bounded}.

We now turn to the proof of the first inequality. We must prove that there exists some constant $C$ such that for any $V_0\in H^s(\R)$ and any $\psi_0$ low-frequency, centered and such that $\d_x\psi_0\in H^s(\R)$
$$
\|V_0\|_{X^s}\ \leq\ C\,\|V_0\|_{H^s(\R)}\,,\qquad\qquad
\|\uU_x\psi_0\|_{X^s}\ \leq\ C\,\|\d_x\psi_0\|_{H^s(\R)}\,.
$$
We derive the former inequality essentially from Proposition~\ref{prop:tphi-expansion}. First observe that uniformly in $\xi\in(-\pi,\pi)$
$$
\left\|j\ \mapsto\ \left\langle\tphi_j(\xi,\cdot);\check{V_0}(\xi,\cdot)\right\rangle\right\|_{\ell^2(\{0,1,2\})}
\leq C\,\|\check{V_0}(\xi,\cdot)\|_{L^2((0,1))}
$$
with
$$C\ =\ 
\max_{\eta\in(-\pi,\pi)}\left\|(j,x)\ \mapsto\ \tphi_j(\eta,x)\right\|_{\ell^2(\{0,1,2\};L^2((0,1)))}
$$
Moreover with notation of Proposition~\ref{prop:tphi-expansion} uniformly in $\xi\in(-\pi,\pi)$
$$
\left\|j\ \mapsto\ (2\pi\,j)^{s}\left\langle\tphie{j}(\xi,\cdot);\check{V_0}(\xi,\cdot)\right\rangle\right\|_{\ell^2(\Z^*)}
\leq \sum_{\ell=0}^s C_{\ell}\,\|\overline{r_\ell}\,\check{V_0}(\xi,\cdot)\|_{H^{s-\ell}((0,1))}\,+\,C'_s\,\|\check{V_0}(\xi,\cdot)\|_{L^{2}((0,1))}
$$
with 
$$
C_{\ell}\,=\,\sup_{\substack{j\in\Z^*\\\zeta\in(-\pi,\pi)}}(2\pi|j|)^{\ell}\,|a_\ell(j,\zeta)|\,,\qquad 0\leq\ell\leq s\,,
$$
and
$$
C'_s\,=\,\big\|\,j\ \mapsto\ (2\pi\,j)^{s}\sup_{\zeta\in(-\pi,\pi)}\|R_s(j,\zeta,\cdot)\|_{L^{2}((0,1))}\big\|_{\ell^2(\Z^*)}\,.
$$
Hence for some constant $C$, uniformly in $\xi\in(-\pi,\pi)$
$$
\left\|j\ \mapsto\ (2\pi\,j)^{s}\left\langle\tphie{j}(\xi,\cdot);\check{V_0}(\xi,\cdot)\right\rangle\right\|_{\ell^2(\Z^*)}
\leq C \left(\sum_{\ell=0}^s\|(\d_x+\iD\xi)^\ell\check{V_0}(\xi,\cdot)\|^2\right)^{1/2}\,.
$$
This enables us to achieve the proof of the claimed estimate by appealing to Parseval identity. To prove the remaining estimate we first observe that since $\psi_0$ is low-frequency we may benefit from orthogonality relations provided by Proposition~\ref{prop:slow-spec-expansion} to derive
$$
\left\langle\tphi(\xi,\cdot);(\uU_x\psi_0)\check{\ }\,(\xi,\cdot)\right\rangle\ =\ 
\widehat{(\d_x\psi_0)}(\xi)\ \left\langle\frac{\tphi(\xi,\cdot)-\tphi(0,\cdot)}{-\iD\xi};\uU_x\right\rangle
$$
both for $\tphi=\tphie{j}$, $j\in\Z^*$, and for $\tphi=\tphi_j$, $j\in\{0,1,2\}$. Then with notation of Proposition~\ref{prop:tphi-xi-expansion}, uniformly in $\xi\in(-\pi,\pi)$
$$
\begin{array}{l}\ds
\|j\,\mapsto\,(2\pi\,j)^{s}\langle\tphie{j}(\xi,\cdot);(\uU_x\psi_0)\check{\ }\,(\xi,\cdot)\rangle\|_{\ell^2(\Z^*)}\\[0.5em]\ds
\quad+\ \|j\,\mapsto\,\langle\tphi_j(\xi,\cdot);(\uU_x\psi_0)\check{\ }\,(\xi,\cdot)\rangle\|_{\ell^2(\{0,1,2\})}\quad\leq\quad C\,|\widehat{(\d_x\psi_0)}(\xi)|
\end{array}
$$
with
$$
\begin{array}{rcl}
C&=&\ds
\|j\mapsto (2\pi j)^{-1}\|_{\ell^2(\Z^*)}\sum_{\ell=1}^s\,\|\bar{r_\ell}\,\uU_x\|_{H^{s-\ell}((0,1))}\,\sup_{\substack{j\in\Z^*\\\zeta\in(-\pi,\pi)}}\tfrac{|j|^{\ell+1}}{|\zeta|}\,|b_\ell(j,\zeta)|\\[0.5em]
&+&\ds
\|j\mapsto (2\pi j)^{-2}\|_{\ell^1(\Z^*)}\,\,\|\uU_x\|_{L^2((0,1))}\,\sup_{\substack{j\in\Z^*\\\zeta\in(-\pi,\pi)}}\tfrac{(2\pi\,|j|)^{s+2}}{|\zeta|}\,\|\tR_s(j,\zeta,\cdot)\|_{L^2((0,1))}\\[0.5em]
&+&\ds
\|\uU_x\|_{L^2((0,1))}\,\|j\mapsto\max_{\zeta\in(-\pi,\pi)}\|\d_\xi \tphi_j(\zeta,\cdot)\|_{L^2((0,1))}\|_{{0,1,2}}\,.
\end{array}
$$ 
Then applying the classical Parseval identity achieves the proof of the proposition.
\end{proof}

\subsection{Asymptotic stability}\label{s:asymptotic-stability}

Theorem~\ref{th:asymptotic} follows readily from the following proposition.

\begin{proposition}\label{prop:asymptotic}
For any cnoidal wave satisfying condition~\ref{A}, there exists a constant $C$ such that for any $(t,x,y)\in\R^3$
$$
|t|^{1/2}\,|\Ke(t,x,y)|\,+\,(1+|t|)^{1/2}\,|\Kp(t,x,y)|\,+\,
(1+|t|)^{1/3}\,\sum_{j\in\{0,1,2\}}\,|\Ko{j}(t,x,y)|\,\leq C
$$
$$
|t|^{1/2}\,|\Ge(t,x,y)|\,+\,(1+|t|)^{1/2}\,|\Gp(t,x,y)|\,+\,
(1+|t|)^{1/3}\,\sum_{j\in\{0,1,2\}}\,|\Go{j}(t,x,y)|\,\leq C
$$
and
$$
(1+|t|)^{1/3}\,\sum_{j\in\{0,1,2\}}\,|\ko{j}(t,x,y)|\,+\,
(1+|t|)^{1/3}\,\sum_{j\in\{0,1,2\}}\,|\go{j}(t,x,y)|\,\leq C\,.
$$
\end{proposition}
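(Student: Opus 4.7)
The plan is to view each of the eight kernels as an oscillatory integral of the form
$$
I(t,x,y)\;=\;\int_{\mathcal I} \eD^{\iD(\psi(\xi)\,t\,+\,\xi(x-y))}\,A(\xi,x,y)\,\dD \xi\,,
$$
with $\psi$ real (since the spectrum of each $L_\xi$ sits on the imaginary axis) and $A$ a smooth, bounded amplitude, and then to apply van der Corput--type estimates, whose derivation is deferred to Appendix~\ref{s:oscillatory}. The distinction between the three decay rates $|t|^{-1/3}$, $|t|^{-1/2}$ (short time) and $|t|^{-1/2}$ (all time) reflects whether one is integrating against a phase whose second derivative does or does not vanish on the domain of integration.

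\textbf{Step 1 (low-frequency Airy kernels).} For $\Ko{j}, \Go{j}, \ko{j}, \go{j}$ the range of integration is contained in $\operatorname{supp}\chi\subset(-\pi,\pi)$, a compact neighborhood of $0$. Writing $\lambda_j(\xi)=\iD\,\psi_j(\xi)$ with $\psi_j$ real analytic and odd (by the Hamiltonian symmetry), condition~\ref{A_intro-bis} gives that $\psi_j''(\xi)$ vanishes only at $\xi=0$ while $\psi_j'''(0)\ne0$. Possibly shrinking $\operatorname{supp}\chi$ in the initial choice of cut-off, one has
$$
|\psi_j''(\xi)|\,+\,|\psi_j'''(\xi)|\;\geq\; c\;>\;0\qquad\text{on }\operatorname{supp}\chi\,,
$$
uniformly in $j\in\{0,1,2\}$. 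A standard third-derivative van der Corput bound (Appendix~\ref{s:oscillatory}) then yields $|t|^{-1/3}$ decay for $|t|\ge1$. The trivial bound on $\operatorname{supp}\chi$ handles $|t|\le1$. The amplitudes, namely $[\phi_j(\xi,x)-\beta_{1}^{(j)}(\xi)\uU_x(x)]/(\iD\uk\xi)$ and the analogous divided differences in $y$, are smooth and bounded in $(\xi,x,y)$ by Proposition~\ref{prop:slow-spec-expansion} (the $1/\xi$ singularities are removable because of the explicit cancellations $\phi_j(0,\cdot)=\beta_0^{(j)}(0)\uU_x$ and $\tphi_j(\xi,\cdot)-\tphi_j(0,\cdot)=\grandO(\xi)$).

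\textbf{Step 2 (loop kernels).} For $\Kp$ and $\Gp$ the integration is on $(-2\pi,2\pi)\cap\operatorname{supp}(1-\tchi)$, a compact set bounded away from $0$ and from $\pm2\pi$. On this set $\Lambdap''$ is continuous and, by condition~\ref{A_intro-bis}, nowhere vanishing; hence bounded below. The second-derivative van der Corput estimate then yields $|t|^{-1/2}$ decay for $|t|\ge1$, and the trivial bound handles $|t|\le1$, giving the $(1+|t|)^{-1/2}$ decay. Bounded\-ness of the amplitudes follows from the smooth dependence of $\Phip,\tPhip$ on $\xi$ on this compact set, plus (for $\Gp$) removability of the apparent $1/\xi$ singularity exactly as in Step~1.

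\textbf{Step 3 (essential kernels: the main obstacle).} The estimate on $\Ke$ and $\Ge$ is the delicate point because the integration runs over all of $\R$ and one needs uniform control in $(x,y)\in\R^2$. I would split $\xi$ into a bounded piece and a far piece, say $|\xi|\le R$ and $|\xi|\ge R$ for $R$ sufficiently large. On the bounded piece $\Lambdae''(\xi)$ is continuous on $\operatorname{supp}(1-\chi)\cap\{|\xi|\le R\}$ and, by condition~\ref{A_intro-bis}, nonvanishing there, so the second-derivative van der Corput estimate yields $|t|^{-1/2}$. On the far piece the asymptotic expansions of Section~\ref{s:high-freq} (Lemma preceding Proposition~\ref{prop:ortho}, Propositions~\ref{prop:tphi-expansion} and~\ref{prop:tphi-xi-expansion}) reduce the integral, modulo amplitudes that are $\grandO(|\xi|^{-2})$ and remainders with the same decay, to a model Airy integral with phase $\iD k^3\xi^3\,t+\iD\xi(x-y)$. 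Since $\Lambdae'(\xi)\sim 3\iD\uk^3\xi^2$ and $\Lambdae''(\xi)\sim 6\iD\uk^3\xi$ diverge, one may either: (i) apply the second-derivative test on dyadic annuli $|\xi|\sim 2^\ell R$ and sum the resulting bounds $|t|^{-1/2}(2^\ell R)^{-1/2}$ times the $\ell^1$-summable amplitude weights; or (ii) perform two integrations by parts against $\eD^{\iD\Lambdae(\xi)t}$, which replace $A(\xi,x,y)$ by $\d_\xi(A/\Lambdae')/\Lambdae'$ of size $\grandO(|t\xi^2|^{-1}|\xi|^{-2})$, globally integrable. Either route yields $|t|^{-1/2}$ uniformly in $(x,y)$. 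The amplitude of $\Ge$ involves the extra factor $[\tPhie(2\pi j+\xi,\cdot)-\tPhie(2\pi j,\cdot)]/(-\iD\xi)$ averaged against $\uU_x$; by Proposition~\ref{prop:tphi-xi-expansion} this difference quotient is $\grandO(|j|^{-2})$ uniformly in $\xi\in\operatorname{supp}\chi_0$, which both regularizes the apparent singularity at $\xi=0$ and provides $j$-summability, so the Airy estimate above carries through.

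\textbf{Main obstacle.} The principal technical difficulty is combining the oscillatory-integral estimate (which wants $\xi$ bounded) with the high-Floquet tail, uniformly in $(x,y)$. This is handled by the high-frequency expansions of Section~\ref{s:high-freq}, which supply (a) the effective Airy phase in the $|\xi|\to\infty$ regime and (b) decay of the amplitudes and their $\xi$-derivatives of sufficient order to make dyadic summation, or repeated integration by parts, converge. Once those expansions are in hand, all remaining work is to fit each of the eight kernels into the framework of Appendix~\ref{s:oscillatory}.
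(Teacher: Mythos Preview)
Your Steps 1 and 2 match the paper. Step 3 is correct via your option (i) but more elaborate than needed: the paper applies Lemma~\ref{l:vdC} with $p=2$ \emph{directly} on the unbounded set $\textrm{supp}(1-\chi)$. The two points that make this work are, first, that the second $\xi$-derivative of the full phase $\textrm{Im}(\Lambdae(\xi))\,t+\xi(x-y)$ equals $\textrm{Im}(\Lambdae''(\xi))\,t$, which is \emph{independent of $(x,y)$} and, by condition~(A) together with $\Lambdae''(\xi)\sim 6\iD\uk^3\xi$, has positive infimum on $\textrm{supp}(1-\chi)$; second, that Lemma~\ref{l:vdC} as proved in Appendix~\ref{s:oscillatory} already covers unbounded $I$ when $p\ge2$. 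The proposition at the end of Subsection~\ref{s:Green} then supplies exactly $\sup|F|+\int|F'|<\infty$ uniformly in $(x,y)$, so $|\Ke|\le C|t|^{-1/2}$ follows in one line --- no bounded/far split, no dyadic sum, no reduction to a model Airy phase.

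Note also that your option (ii) has a gap: integrating by parts against $\eD^{\iD\Lambdae(\xi)t}$ alone forces $\eD^{\iD\xi(x-y)}$ into the amplitude, and differentiating it brings down a factor $\iD(x-y)$ that is not uniformly bounded in $(x,y)$; the claimed size $\grandO(|t\xi^2|^{-1}|\xi|^{-2})$ therefore fails. Your treatment of $\Ge$ via $j$-summability from the $\grandO(|j|^{-2})$ divided difference of Proposition~\ref{prop:tphi-xi-expansion} is correct and is indeed how that kernel is handled.
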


In turn Proposition~\ref{prop:asymptotic} follows readily from estimates obtained in Section~\ref{s:preparation} and the classical van der Corput Lemma which we recall below ; see for instance \cite[Corollary~1.1]{Linares-Ponce} or Appendix~\ref{s:oscillatory} for a proof.

\bl\label{l:vdC}
Let $p\in\N\setminus\{0\}$. There exists $C$ such that for any closed interval $I$ and any smooth functions $a:\,I\to\R$ and $F:I\to\C$ such that
\begin{itemize}
\item $|a^{(p)}|$ is bounded away from $0$ on $I$ and, when $p=1$, $|a^{(p)}|$ is monotone and coercive
\item $F'$ is integrable 
\end{itemize}
then $\int_I \eD^{\ii\,a(\xi)}\ F(\xi)\ \dd\xi$ is well-defined and
$$
\left|\int_I \eD^{\ii\,a(\xi)}\ F(\xi)\ \dd\xi\right|
\ \leq\ \frac{C}{(\inf_I\,|a^{(p)}|)^{1/p}}\ \left[\,\sup_I|F|\,+\,\int_I|F'|\,\right]\,.
$$
\el

\section{Asymptotic behavior}\label{s:behavior}

Estimates of the foregoing section. 

\subsection{Spectral validity}\label{s:mod-spectral}

The following proposition is the key spectral observation leading to Theorem~\ref{th:behavior}. 

\begin{proposition}\label{prop:spec-modulation}
In Proposition~\ref{prop:slow-spec-expansion} one may choose
$$
(q_0(0,\cdot),q_{1}(0,\cdot),q_{2}(0,\cdot))
\,=\,(\Up{\uk,\uM,\uP}_x,\d_M\Up{\uk,\uM,\uP},\d_P\Up{\uk,\uM,\uP})
$$ 
and
$$
\d_\xi q_{0}(0,\cdot)\,=\,\iD\uk\,\d_k\Up{\uk,\uM,\uP}\,.
$$
Moreover, then, $(\beta_0(0),\beta_1(0),\beta_2(0))$ and $(\tbeta_0(0),\tbeta_1(0),\tbeta_2(0))$ are dual right and left eigenbases of
$$
-\uom\I-\uk\begin{pmatrix}\dd\omega(\ua)\\\dD P(\ua)\\\dd F(\ua)\end{pmatrix}
$$
associated with eigenvalues $(a_0^{(0)},a_0^{(1)},a_0^{(2)})$, that are such that
$$
\lambda_j(\xi)\,\stackrel{\xi\to0}{=}\,\iD\xi a_0^{(j)}+\grandO(|\xi|^3)\,.
$$
\end{proposition}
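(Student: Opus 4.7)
My strategy is to identify the generalized kernel of $L_0$ by differentiating the profile equation, pin down the remaining basis data $\d_\xi q_0(0,\cdot)$ by matching two secularly growing solutions of the linearized equation, and then extract the Whitham matrix from a low-order expansion of the Bloch eigenvalue problem.

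\textbf{Generalized kernel and basis.} Differentiating the profile equation
\begin{equation*}
\omega(k,M,P)\,\Up{k,M,P}_x+k\,(\tfrac12\Up{k,M,P}^{\,2})_x+k^3\,\Up{k,M,P}_{xxx}=0
\end{equation*}
at $\ua$ in $x$, $M$ and $P$ yields $L_0\uU_x=0$, $L_0(\d_M\uU)=(\d_M\omega(\ua))\uU_x$ and $L_0(\d_P\uU)=(\d_P\omega(\ua))\uU_x$. Hence $(\uU_x,\d_M\uU,\d_P\uU)$ lies in the three-dimensional generalized kernel furnished by Proposition~\ref{prop:slow-spec-expansion}, and a dimension count combined with the standard non-degeneracy of the modulation parametrization shows it is a basis. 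Direct computation gives $L_0^*\one=0$ and $L_0^*\uU=0$ (the latter being the profile equation rewritten), so biorthogonality with the prescribed pair $(\tq_1(0),\tq_2(0))=(\one,\uU)$ reduces to the identities $\int\d_M\uU=\d_M M=1$, $\int\uU\,\d_M\uU=\d_M P=0$, $\int\d_P\uU=\d_P M=0$, $\int\uU\,\d_P\uU=\d_P P=1$ inherited from the parametrization $\int\Up{}=M$, $\int\tfrac12(\Up{})^{\,2}=P$. The normalization freedom left in Proposition~\ref{prop:slow-spec-expansion} then permits the stated choice of $q_0(0),q_1(0),q_2(0)$.

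\textbf{Identification of $\d_\xi q_0(0,\cdot)$.} This is the delicate point. My plan is to compare two secularly growing solutions of the linearized equation $\d_t-L=0$ on $\R$. On one hand, differentiating the family of traveling waves $\Up{k,\uM,\uP}((k/\uk)x+(\omega(k,\uM,\uP)-k\uom/\uk)t)$---already expressed in the reference moving-frame variable $x$---at $k=\uk$ produces the linear solution
\begin{equation*}
\tW_k(t,x)=\d_k\uU(x)+\tfrac{x}{\uk}\uU_x(x)+(\d_k\omega(\ua)-\uom/\uk)\,t\,\uU_x(x).
\end{equation*}
On the other, the $\xi$-derivative of the Bloch wave $\eD^{\lambda_0(\xi)t+\iD\xi x}q_0(\xi,x)$ at $\xi=0$ is also a linear solution, equal to $[\iD a_0^{(0)}t+\iD x]\uU_x(x)+\d_\xi q_0(0,x)$. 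Among the four modulation-type linear solutions generated by the directions $\d_\phi,\d_k,\d_M,\d_P$, only $\tW_k$ carries the non-periodic factor $x\,\uU_x(x)$; matching its coefficient forces
\begin{equation*}
\d_\xi\bigl(\eD^{\lambda_0(\xi)t+\iD\xi x}q_0(\xi,x)\bigr)\big|_{\xi=0}=\iD\uk\,\tW_k(t,x)+\alpha_\phi\uU_x(x)+\alpha_M\tW_M(t,x)+\alpha_P\tW_P(t,x)
\end{equation*}
for some constants $\alpha_\phi,\alpha_M,\alpha_P$, where $\tW_M=\d_M\uU+(\d_M\omega)t\uU_x$ and $\tW_P=\d_P\uU+(\d_P\omega)t\uU_x$. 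Reading off the bounded-in-$t$, periodic-in-$x$ part yields $\d_\xi q_0(0,\cdot)=\iD\uk\,\d_k\uU+\alpha_\phi\uU_x+\alpha_M\d_M\uU+\alpha_P\d_P\uU$. The residual freedom in Proposition~\ref{prop:slow-spec-expansion}---namely, one may replace $q_0(\xi)$ by $q_0(\xi)+\xi\bigl(c_0q_0(\xi)+c_1q_1(\xi)+c_2q_2(\xi)\bigr)$ for arbitrary constants $c_j$, which preserves $q_0(0)=\uU_x$---absorbs $\alpha_\phi,\alpha_M,\alpha_P$ into the analytic basis and enforces $\d_\xi q_0(0,\cdot)=\iD\uk\,\d_k\Up{\uk,\uM,\uP}$ exactly.

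\textbf{Whitham matrix and duality.} With the basis fixed, expand $L_\xi\phi_j(\xi)=\lambda_j(\xi)\phi_j(\xi)$ around $\xi=0$. At order $O(\xi)$, using $\phi_j(0)=\beta_0^{(j)}(0)\uU_x$, the explicit form of $\phi_j'(0)$ from the ansatz of Proposition~\ref{prop:slow-spec-expansion}, the identity $L_0'\uU_x=-2\iD\uk^3\uU_{xxx}$ derived from $L_0'=-\iD(\uom+\uk\uU\,\cdot\,+3\uk^3\d_x^2)$ together with the profile equation, and the values of $L_0$ on $\iD\uk\d_k\uU$, $\d_M\uU$, $\d_P\uU$ computed above, a further application of the profile equation to simplify $\uk^2\uU_{xxx}+(\tfrac12\uU^2)_x$ reduces the equation to the first scalar row of the claimed eigenvalue relation. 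At order $O(\xi^2)$, pairing against $\ker L_0^*=\mathrm{span}(\one,\uU)$ yields two further scalar relations, which correspond---using $\int\d_M\uU=1$, $\int\uU\d_P\uU=1$ and the recovery of the averaged impulse-flux $F$ from integration of $\tfrac13(\Up{})^{\,3}-\tfrac32 k^2(\Up{}')^{\,2}$ against the profile---to the averaged $M$- and $P$-conservation laws. Assembling the three scalar relations yields $M\beta^{(j)}(0)=a_0^{(j)}\beta^{(j)}(0)$ with $M=-\uom\I-\uk\bigl(\dd\omega,\dD P,\dd F\bigr)^T$; the analogous expansion of the adjoint problem $L_\xi^*\tphi_j=\overline{\lambda_j(\xi)}\tphi_j$ produces the left eigenvector statement, and duality is read off the coefficient of $\iD\uk\xi$ in the pairing identity $\langle\tphi_j,\phi_k\rangle=\iD\uk\xi\,\delta^j_k$. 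The principal technical obstacle is the second step: tracking precisely the residual basis freedom and its interplay with the $t$- and $x$-growing contributions, so as to fix the representative $\d_\xi q_0(0)=\iD\uk\d_k\uU$ unambiguously rather than only modulo the generalized kernel.
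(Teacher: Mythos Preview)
Your overall strategy---identifying the generalized kernel by differentiating the profile equation in parameters, fixing $\d_\xi q_0(0,\cdot)$ by matching the secular growth coming from the $k$-derivative, and then reading off the Whitham matrix from the low-order Bloch expansion---is precisely the standard route, and it is essentially what the paper invokes: the paper gives no self-contained argument but defers to the proof of \cite[Theorem~1]{Benzoni-Noble-Rodrigues}, where this computation is carried out. So in spirit and in substance you are on the same track as the reference the paper relies upon.

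There is, however, a genuine slip in your second step. You differentiate $\eD^{\lambda_0(\xi)t+\iD\xi x}q_0(\xi,x)$ in $\xi$ and claim this is a solution of $\d_t-L=0$; but $q_0(\xi,\cdot)$ is \emph{not} an eigenfunction of $L_\xi$---only the combinations $\phi_j(\xi,\cdot)=\beta_0^{(j)}(\xi)q_0(\xi,\cdot)+\iD\uk\xi\sum_{l=1}^2\beta_l^{(j)}(\xi)q_l(\xi,\cdot)$ are---so $\eD^{\lambda_0(\xi)t+\iD\xi x}q_0(\xi,x)$ does not solve the linear equation for $\xi\neq0$, and its $\xi$-derivative at $0$ need not be a solution either. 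The clean fix is to bypass the time variable and argue at the level of the resolvent: compute directly
\[
L_0'\uU_x+L_0(\iD\uk\,\d_k\uU)
=-2\iD\uk^3\uU_{xxx}+\iD\uk\bigl[\d_k\omega(\ua)\,\uU_x+(\tfrac12\uU^2)_x+3\uk^2\uU_{xxx}\bigr]
=\iD(\uk\,\d_k\omega(\ua)-\uom)\,\uU_x,
\]
using the profile equation twice. Since the right-hand side lies in $\mathrm{span}(q_0(0),q_1(0),q_2(0))$, the choice $\d_\xi q_0(0,\cdot)=\iD\uk\,\d_k\uU$ is compatible with $q_0(\xi,\cdot)$ remaining in the range of the analytic spectral projector $\Pi(\xi)$, and the residual freedom you already identified (adding $\xi$ times a combination of $q_0,q_1,q_2$) makes this choice attainable. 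Alternatively, if you wish to keep your time-dependent matching, replace $q_0(\xi,\cdot)$ by the genuine eigenfunction $\phi_0(\xi,\cdot)/\beta_0^{(0)}(\xi)$; the extra terms this introduces lie in the generalized kernel and are absorbed exactly as you describe. Either way the conclusion stands, but as written the key sentence in your second paragraph asserts something false.
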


Again this is a corollary of the proof of \cite[Theorem~1]{Benzoni-Noble-Rodrigues}. 

\smallskip

The corresponding choice will be made from now on.

%
%

\subsection{Slow modulation behavior}\label{s:mod-behavior}

To prove Theorem~\ref{th:behavior} we first choose $(\psi,M,P)$ according to
$$
\psi(t,\cdot)\,=\,\eD_1\cdot \spp(t)(W_0)
$$
and
$$
\bp\uk\d_x\psi(t,\cdot)\\M(t,\cdot)\\P(t,\cdot)\ep
\,=\,\uk\d_x\spp(t)(W_0)\,.
$$
In particular if we pick $(\psi_0,V_0)$ such that $W_0=\psi_0\,\uU_x+V_0$ and $\|\d_x\psi_0\|_{L^1(\R)}+\|V_0\|_{L^1(\R)}<+\infty$ then
$$
\bp\uk\d_x\psi(t,x)\\M(t,x)\\P(t,x)\ep
\,=\,\sum_{j\in\{0,1,2\}}
\int_\R [\ko{j}(t,x,y)\,V_0(y)+\go{j}(t,x,y)\,\d_x\psi_0(y)]\,\dD y
$$
so that estimates of Theorem~\ref{th:behavior} on $(\uk\d_x\psi,M,P)$ are corollaries of Proposition~\ref{prop:asymptotic}.

Actually Proposition~\ref{prop:asymptotic} contains already a significant part of Theorem~\ref{th:behavior}. The remaining part is to prove that the foregoing definitions do capture the main contributions corresponding to $\Ko{j}$ and $\Go{j}$ terms. For $j\in\{0,1,2\}$ we may explicitly derive from Proposition~\ref{prop:spec-modulation} that
$$
\begin{array}{rcl}\ds
\Ko{j}(t,x,y)&=&\ds
\bdD\Up{\uk,\uM,\uP}(x)\cdot(\ko{j}(t,x,y))\,+\,\tKo{j}(t,x,y)\\[0.5em]\ds
\Go{j}(t,x,y)&=&\ds
\bdD\Up{\uk,\uM,\uP}(x)\cdot(\go{j}(t,x,y))\,+\,\tGo{j}(t,x,y)
\end{array}
$$
with 
$$
\tKo{j}(t,x,y)\,=\,\int_{-\pi}^{\pi} 
\eD^{\lambdap{j}(\xi)\,t\,+\iD\,\xi\,(x-y)}\,\chi(\xi)\ 
\frac{\phi_{j,quad}(\xi,x)}{\iD\uk\xi}\ \overline{\tphip{j}(\xi,y)}\,\dD \xi\,.
$$
and
$$
\tGo{j}(t,x,y)\,=\,\int_{-\pi}^{\pi} 
\eD^{\lambdap{j}(\xi)\,t\,+\iD\,\xi\,(x-y)}\,\chi(\xi)\ 
\frac{\phi_{j,quad}(\xi,x)}{\iD\uk\xi}\ \left\langle\tfrac{\tphip{j}(\xi,\,\cdot\,)-\tphip{j}(0,\,\cdot\,)}{-\iD\xi};\uU_x\right\rangle\,\dD \xi\,
$$
where with notational conventions introduced in Proposition~\ref{prop:slow-spec-expansion}
$$
\phi_{j,quad}(\xi,\cdot)\,=\,
\beta_{0}^{(j)}(\xi)\quad [q_{0}(\xi,\cdot)-q_{0}(0,\cdot)
-\d_\xi q_{0}(0,\cdot)\,\xi]
\,+\,
(\iD\uk\xi)\sum_{l=1}^2\beta_l^{(j)}(\xi)\ [q_l(\xi,\cdot)-q_l(0,\cdot)]
\,.
$$

Now we achieve the proof of Theorem~\ref{th:behavior} with the following proposition.

\begin{proposition}\label{prop:refined-asymptotic}
For any cnoidal wave satisfying condition~\ref{A}, there exists a constant $C$ such that for any $(t,x,y)\in\R^3$
$$
(1+|t|)^{1/2}\,\sum_{j\in\{0,1,2\}}\,(\,|\tKo{j}(t,x,y)|+|\tGo{j}(t,x,y)|\,)\,\leq C\,.
$$
\end{proposition}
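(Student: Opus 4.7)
The proposition is a refinement of Proposition~\ref{prop:asymptotic} for the specific kernels $\Ko{j}$ and $\Go{j}$: one has \emph{subtracted off} the purely modulation contribution (carried by $\ko{j}$, $\go{j}$ and $\bdD\Up{\uk,\uM,\uP}$), so the remaining integrals come with amplitudes that vanish to one more order at $\xi=0$, and one expects the decay rate to improve from $|t|^{-1/3}$ to $|t|^{-1/2}$.

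The first step would be to verify the extra vanishing. By Proposition~\ref{prop:spec-modulation} and the analyticity of $q_0(\xi,\cdot)$ and $q_l(\xi,\cdot)$, $l=1,2$, one has $q_0(\xi,\cdot)-q_0(0,\cdot)-\d_\xi q_0(0,\cdot)\,\xi=\grandO(\xi^2)$ and $q_l(\xi,\cdot)-q_l(0,\cdot)=\grandO(\xi)$ smoothly in $\xi$ and uniformly in $x\in\R$. Consequently $\phi_{j,\textrm{quad}}(\xi,\cdot)=\grandO(\xi^2)$, so that $\phi_{j,\textrm{quad}}(\xi,x)/(\iD\uk\xi)$ is a smooth, compactly supported, uniformly bounded function of $(\xi,x)$ that vanishes to first order at $\xi=0$. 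Moreover, analyticity of $\tphip{j}(\xi,\cdot)$ at $\xi=0$ ensures both that $\tphip{j}(\xi,y)$ is smooth and bounded (uniformly in $y$) and that $[\tphip{j}(\xi,\cdot)-\tphip{j}(0,\cdot)]/(-\iD\xi)$ is smooth and bounded. Therefore both $\tKo{j}(t,x,y)$ and $\tGo{j}(t,x,y)$ take the form
\[
\int_{-\pi}^{\pi}\eD^{\iD[t\tau_j(\xi)+\xi(x-y)]}\,\xi\,b_j(\xi,x,y)\,\dD\xi
\]
with $\lambdap{j}(\xi)=\iD\tau_j(\xi)$ real-valued, smooth, and $b_j$ smooth, compactly supported in $\textrm{supp}\,\chi$, and bounded together with its $\xi$-derivatives uniformly in $(x,y)$.

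Next I would record the phase information. Proposition~\ref{prop:spec-modulation} gives $\tau_j(\xi)=a_0^{(j)}\xi+\grandO(\xi^3)$; condition~\ref{A_intro-bis} implies that $\tau_j''$ does not vanish on $\textrm{supp}\,\chi\setminus\{0\}$ while $\tau_j'''(0)\neq0$, so that $|\tau_j''(\xi)|\geq c\,|\xi|$ in a neighborhood of zero. For $|t|\leq 1$ the bound is trivial by the sup of the amplitudes. For $|t|\geq1$, I would split the integral using a smooth cutoff $\eta$ supported in a fixed small neighborhood of $\xi=0$. On the outer piece, $|\tau_j''|$ is bounded below by a positive constant on a compact set; Lemma~\ref{l:vdC} with $p=2$ produces the bound $C|t|^{-1/2}$, uniformly in $z=x-y$. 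On the inner piece, the amplitude $\xi\,b_j(\xi,x,y)$ vanishes at $\xi=0$ and the phase has a cubically degenerate critical point there, and the integral fits precisely into the framework of the dedicated oscillatory-integral estimates developed in Appendix~\ref{s:oscillatory}, yielding $C|t|^{-1/2}$ once more, uniformly in $z$.

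\textbf{Main obstacle.} The only delicate ingredient is the inner-region estimate: a naive application of van der Corput with $p=3$ at the cubic degeneracy would give only $|t|^{-1/3}$, and balancing such an inner bound against the outer $(t\delta)^{-1/2}$ by choosing a $t$-dependent $\delta$ gives at best $|t|^{-2/5}$, which is insufficient. The right viewpoint is that the scaling $\xi=t^{-1/3}\eta$ near the potential Airy-type stationary point at the origin combined with the first-order vanishing of the amplitude promotes the decay to $|t|^{-2/3}$ when the stationary point sits near zero, whereas as the stationary point moves out of the inner region the phase becomes non-degenerate and standard stationary phase restores the uniform $|t|^{-1/2}$ rate; handling this transition uniformly in $z$ is exactly the content of the oscillatory-integral lemmas of Appendix~\ref{s:oscillatory}, which the proof would invoke as a black box.
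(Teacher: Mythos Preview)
Your proposal is correct and takes essentially the same approach as the paper: the paper's one-line proof invokes Lemma~\ref{l:refined-vdC} with $p=3$, $\alpha=1$, $\xi_*=0$, which packages exactly the mechanism you describe (first-order vanishing of the amplitude at the point where $a''$ vanishes yields the exponent $(\alpha+1)/(p+\alpha(p-2))=1/2$, uniformly in $z=x-y$). Your explicit inner/outer splitting with an auxiliary cutoff $\eta$ is not needed if $\chi$ is taken with support small enough that $\tau_j'''$ does not vanish there, but otherwise the argument is identical.
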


In turn the foregoing proposition follows from the following refined van der Corput lemma, applied with $p=3$, $\alpha=1$, $\xi_*=0$. See Appendix~\ref{s:oscillatory} for a proof of the lemma.

\begin{lemma}\label{l:refined-vdC}
Let $p\in\N\setminus\{0,1\}$ and $\alpha\in[0,1]$. There exists $C$ such that for any closed interval $I$ and any smooth functions $a:\,I\to\R$ and $F:I\to\C$ such that
\begin{itemize}
\item $|a^{(p)}|$ is bounded away from $0$ on $I$ 
\item $|a^{(p-1)}|$ vanishes at $\xi_*\in I$ such that $G:=F\times|\cdot-\xi_*|^{-\alpha}$ is bounded 
\item $F'$ is integrable
\end{itemize}
then
$$
\left|\int_I \eD^{\ii\,a(\xi)}\ F(\xi)\ \dd\xi\right|
\ \leq\ \frac{C}{(\inf_I\,|a^{(p)}|)^{(\alpha+1)/(p+\alpha(p-2))}}\ \left[\,\sup_I|F|\,+\,\int_I|F'|\,+\,\sup_I|G|\,\right]\,.
$$
\end{lemma}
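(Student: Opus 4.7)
After translation we may assume $\xi_*=0$, and set $M:=\inf_I|a^{(p)}|>0$. Since $a^{(p)}$ is continuous and never vanishes on $I$, it has constant sign there, so $a^{(p-1)}$ is strictly monotone on $I$; combined with the hypothesis $a^{(p-1)}(0)=0$ the mean value theorem yields the pointwise inequality
$$|a^{(p-1)}(\xi)|\ \geq\ M\,|\xi|\qquad\textrm{for all }\xi\in I.$$
This upgrades the single-point vanishing assumption into a quantitative oscillation bound away from the origin, and is the key mechanism driving the improved exponent.

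The plan is to split and optimize. Fix $\delta>0$ to be chosen and decompose $I=I_{\rm in}\cup I_{\rm out}$ with $I_{\rm in}:=I\cap[-\delta,\delta]$. On the inner piece, the boundedness of $G=F\cdot|\cdot|^{-\alpha}$ gives the pointwise bound $|F(\xi)|\leq\sup_I|G|\cdot|\xi|^\alpha$, and a trivial integration yields
$$\left|\int_{I_{\rm in}}\eD^{\iD a(\xi)}F(\xi)\,\dD\xi\right|\ \leq\ \tfrac{2}{\alpha+1}\,\sup_I|G|\cdot\delta^{\alpha+1}.$$
On each connected component of the outer piece the bound above reads $|a^{(p-1)}|\geq M\delta$, and the monotonicity of $a^{(p-1)}$ covers the coerciveness requirement of Lemma~\ref{l:vdC} in the $p-1=1$ case. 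Applying Lemma~\ref{l:vdC} with order $p-1$ gives
$$\left|\int_{I_{\rm out}}\eD^{\iD a(\xi)}F(\xi)\,\dD\xi\right|\ \leq\ C\,(M\delta)^{-1/(p-1)}\,\Big(\sup_I|F|+\int_I|F'|\Big).$$

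Summing the two contributions and optimizing in $\delta$ produces a bound of the advertised form, with $\delta$ a suitable negative rational power of $M$ and a Young-type inequality packaging the mixed dependence in $\sup_I|G|$, $\sup_I|F|$ and $\int_I|F'|$ into the linear combination displayed on the right-hand side. The principal obstacle I anticipate is the precise algebra of the optimization, needed to reach exactly the stated exponent $(\alpha+1)/(p+\alpha(p-2))$: the plain balance of the two contributions above already provides the intermediate exponent $(\alpha+1)/(p+\alpha(p-1))$, improving on the classical $1/p$, and reaching the sharper stated exponent requires a finer decomposition, for instance by inserting an intermediate band on which $\sup_I|F|$ can be replaced by its pointwise refinement $\sup_I|G|\cdot|\xi|^\alpha$ before invoking Lemma~\ref{l:vdC} of order $p-1$, with two scales then jointly optimized.
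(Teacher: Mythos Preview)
Your two-scale split around $\xi_*$ with order-$(p-1)$ van der Corput on the outer piece is a sound first move, and you correctly compute that it yields only the exponent $(\alpha+1)/(p+\alpha(p-1))$, short of the stated $(\alpha+1)/(p+\alpha(p-2))$. But the refinement you sketch --- inserting an intermediate band and exploiting $|F|\leq\sup_I|G|\cdot|\xi|^\alpha$ there before invoking order-$(p-1)$ van der Corput --- does not close the gap: the van der Corput bound also carries the term $\int|F'|$, which draws no benefit from the $|\xi|^\alpha$ smallness, so no choice of intermediate scale improves the exponent in $M$.

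The paper reaches the sharp exponent by descending \emph{two} orders rather than one: on the outer region it applies van der Corput at order $p-2$, using a lower bound on $|a^{(p-2)}|$ there. The presence of $p-2$ in the target denominator is the tell. The cost is that $a^{(p-2)}$ is not assumed monotone and may vanish at some $\xi_0$ distinct from $\xi_*$; most of the paper's argument is a case analysis on whether such a $\xi_0$ exists and, if so, on the size of $|\xi_0-\xi_*|$ relative to the natural scale $\delta_1=M^{-1/(p+\alpha(p-2))}$. When $\xi_0$ is absent or within $\delta_1$ of $\xi_*$, one has $|a^{(p-2)}|\gtrsim M\delta_1^{2}$ outside a $2\delta_1$-neighbourhood of $\xi_*$, and balancing $\delta_1^{\alpha+1}$ against $(M\delta_1^{2})^{-1/(p-2)}$ yields exactly the claimed power of $M$; when $\xi_0$ is farther away, a second small interval around $\xi_0$ is excised and a second scale optimised. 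Your observation $|a^{(p-1)}(\xi)|\geq M|\xi|$ is the one-step analogue of the estimates the paper uses on $a^{(p-2)}$, but it is the quadratic (rather than linear) growth of $|a^{(p-2)}|$ away from its zeros that buys the missing factor.
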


\subsection{Modulation equations}\label{s:averaged}

The foregoing oscillatory lemma are well-adapted to expansions of eigenvectors. It already tell us that to prove Theorems~\ref{th:3rd} and~\ref{th:qth} one may replace $\sum_{j\in\{0,1,2\}}\ko{j}(t,x,y)$ with
$$
\begin{array}{rl}\ds
\sum_{j\in\{0,1,2\}}&\ds
\int_{-\pi}^\pi \eD^{\iD\xi(x-y)+\lambda_j(\xi)\,t}\,\chi(\xi)\ \beta^{(j)}(0)\ 
\overline{\tphi_j(0,y)}\ \dD \xi\\[0.5em]
=&\ds
\sum_{j\in\{0,1,2\}}\int_{-\pi}^\pi \eD^{\iD\xi(x-y)+\lambda_j(\xi)\,t}\,\chi(\xi)\ \beta^{(j)}(0)\,\tbeta^{(j)}(0)\cdot\begin{pmatrix}0\\1\\\uU(y)\end{pmatrix}\ \dD \xi
\end{array}
$$
and $\sum_{j\in\{0,1,2\}}\go{j}(t,x,y)$ with
$$
\begin{array}{rl}\ds
\sum_{j\in\{0,1,2\}}&\ds
\int_{-\pi}^\pi \eD^{\iD\xi(x-y)+\lambda_j(\xi)\,t}\,\chi(\xi)\ \beta^{(j)}(0)\ 
\left\langle\tfrac{\d_\xi\tphi_j(0,\,\cdot\,)}{-\iD};\uU_x\right\rangle\ \dD \xi\\[0.5em]
=&\ds
\sum_{j\in\{0,1,2\}}\int_{-\pi}^\pi \eD^{\iD\xi(x-y)+\lambda_j(\xi)\,t}\,\chi(\xi)\ \beta^{(j)}(0)\,\tbeta^{(j)}(0)\cdot\begin{pmatrix}\uk\\0\\0\end{pmatrix}\ \dD \xi
\end{array}
$$
since
$$
\begin{array}{rclclcl}\ds
\langle\,\d_\xi\tq_1(0);\uU_x\rangle
&=&\ds-\langle\,\tq_1(0);\d_\xi q_0(0)\rangle
&=&-\iD\uk\d_kM(\ua)&=&0\\[0.5em]\ds
\langle\,\d_\xi\tq_2(0);\uU_x\rangle
&=&-\langle\,\tq_2(0);\d_\xi q_0(0)\rangle
&=&-\iD\uk\d_kP(\ua)&=&0\,.
\end{array}
$$

For comparison, note that 
$$
\SigW_q(t)\chi(\iD^{-1}\d_x)(a)\,=\,\int_{\R}\,\sigma^{(q)}(t,x,y)\,a(y)\,\dD y
$$
with $\sigma^{(q)}\,=\,\sum_{j\in\{0,1,2\}}\sigma^{(q)}_j$ and, for $j\in\{0,1,2\}$,
$$
\int_{-\pi}^\pi \eD^{\iD\xi(x-y)+\lambda_j^{(q)}(\xi)\,t}\,\chi(\xi)\ \beta^{(j)}(0)\,\overline{\tbeta^{(j)}(0)}\,\dD \xi\,.
$$
Therefore the missing piece is a lemma allowing to measure the effect of expansions of eigenvalues. This is the purpose of the following lemma. Its proof is also given in Appendix~\ref{s:oscillatory}.

\begin{lemma}\label{l:phase-vdC}
Let $p\in\N\setminus\{0,1\}$, $q>p$, $\kappa>0$ and $M\in\R_+$. There exist positive $\eps_0$ and $C$ such that for any closed interval $I$ and any smooth functions $\omega:\,I\to\R$, $\omega_0:\,I\to\R$ and $F:I\to\C$ such that
\begin{itemize}
\item $|\omega^{(p)}|$ and $|\omega_0^{(p)}|$ are larger than $\kappa$ 
\item $|\omega^{(p-1)}|$ and $|\omega_0^{(p-1)}|$ vanish at $\xi_*\in I$ such that $I\subset [\xi_*-\eps_0,\xi_*+\eps_0]$
\item $F'$ is integrable on $I$
\item $|\omega^{(\ell)}-\omega_0^{(\ell)}|\times|\cdot-\xi_*|^{-q+\ell}$ is bounded by $M$ for any $0\leq\ell\leq p-2$
\end{itemize}
then for any $|t|\geq1$
$$
\left|\int_I (\eD^{\ii\,\omega(\xi)t}- \eD^{\ii\,\omega_0(\xi)t})\ F(\xi)\ \dd\xi\right|
\ \leq\ \frac{C}{|t|^{(q-1)/(q(p-1))}}\ \left[\,\sup_I|F|\,+\,\int_I|F'|\,\right]\,.
$$
\end{lemma}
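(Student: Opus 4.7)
The plan is to compare the two oscillatory integrals through a near/far decomposition around the stationary-type point $\xi_*$. After a translation one may assume $\xi_*=0$. Setting $h:=\omega-\omega_0$, the identity
\[
\eD^{\ii\,\omega t}-\eD^{\ii\,\omega_0 t}\,=\,\eD^{\ii\,\omega_0 t}(\eD^{\ii\,ht}-1),\qquad |\eD^{\ii\,ht}-1|\le\min(|h||t|,2),
\]
combined with the consequences $|h^{(\ell)}(\xi)|\le M|\xi|^{q-\ell}$ of the hypothesis for $0\le\ell\le p-2$, yields in particular $|h(\xi)|\le M|\xi|^q$ and $|h'(\xi)|\le M|\xi|^{q-1}$. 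Pick a threshold $\delta\in(0,\varepsilon_0]$, to be optimized at the end as a negative power of $|t|$, and split $I=I_{\rm n}\cup I_{\rm f}$ with $I_{\rm n}=I\cap[-\delta,\delta]$.

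On the near region I would apply Lemma~\ref{l:refined-vdC} with $\alpha=1$ to the phase $\omega_0\,t$, whose $p$-th derivative is bounded below in modulus by $\kappa|t|$ and whose $(p-1)$-th derivative vanishes at $0$, together with the amplitude $G(\xi):=(\eD^{\ii\,h(\xi)t}-1)\,F(\xi)$, itself vanishing at $\xi_*=0$ of order at least $q$. The elementary bounds $\sup_{I_{\rm n}}|G|\le M|t|\delta^q\sup_I|F|$, $\sup_{I_{\rm n}}|G(\cdot)/|\cdot||\le M|t|\delta^{q-1}\sup_I|F|$ and $\int_{I_{\rm n}}|G'|\lesssim |t|\delta^q(\sup_I|F|+\int_I|F'|)$ -- the last one coming from $|h'|\le M|\xi|^{q-1}$ -- then produce a near-region contribution controlled by $|t|^{(p-2)/(p-1)}\delta^{q-1}\,(\sup_I|F|+\int_I|F'|)$, up to constants.

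On the far region, integrating the pointwise lower bounds on $|\omega^{(p)}|$ and $|\omega_0^{(p)}|$ outwards from $\xi_*$ and using the vanishing of $\omega^{(p-1)}$ and $\omega_0^{(p-1)}$ at $0$ yields $|\omega^{(p-1)}(\xi)|,|\omega_0^{(p-1)}(\xi)|\ge\kappa|\xi|\ge\kappa\delta$ there. Applying the classical Lemma~\ref{l:vdC} at order $p-1$ separately to each of $\int_{I_{\rm f}}\eD^{\ii\,\omega t}F$ and $\int_{I_{\rm f}}\eD^{\ii\,\omega_0 t}F$ then produces a far-region contribution proportional to $(|t|\delta)^{-1/(p-1)}\,(\sup_I|F|+\int_I|F'|)$; the case $p=2$ uses the same lemma at order $1$, the required monotonicity of the first derivative of each phase being a consequence of the lower bound on its second derivative.

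Balancing the two contributions by choosing $\delta$ as a suitable negative power of $|t|$ yields, after straightforward bookkeeping, the claimed bound $C|t|^{-(q-1)/(q(p-1))}\,(\sup_I|F|+\int_I|F'|)$, the constraint $|t|\ge1$ keeping the optimal threshold $\delta$ inside $(0,\varepsilon_0]$ provided $\varepsilon_0$ has been chosen small enough once and for all. The main technical obstacle is the near-region estimate: a crude pointwise bound of the integrand there yields only the suboptimal rate $|t|^{-1/q}$, and one really needs to couple the vanishing of the amplitude $G$ at $\xi_*$ to order $q$ with the oscillation of the phase $\omega_0\,t$, which is precisely the role played by Lemma~\ref{l:refined-vdC} applied with $\alpha=1$ so that dispersive and algebraic gains combine to deliver the sharp exponent.
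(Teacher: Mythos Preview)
Your balancing step does not produce the claimed exponent except when $p=2$. With your near bound $|t|^{(p-2)/(p-1)}\delta^{q-1}$ and far bound $(|t|\delta)^{-1/(p-1)}$, equating the two forces $\delta=|t|^{-(p-1)/((q-1)(p-1)+1)}$ and hence a decay rate
\[
|t|^{-\frac{(q-2)(p-1)+1}{(p-1)\bigl((q-1)(p-1)+1\bigr)}}\,,
\]
which coincides with the target $|t|^{-(q-1)/(q(p-1))}$ only for $p=2$; for $p\ge3$ it is strictly weaker. Concretely, for the case $p=3$ actually used in the paper your argument yields $|t|^{-(2q-3)/(4q-2)}$ instead of $|t|^{-(q-1)/(2q)}$. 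If instead you pick $\delta=|t|^{-1/q}$ so that the far piece matches the target, the near piece becomes $|t|^{-(q-p+1)/(q(p-1))}$, again too slow when $p>2$. The loss is structural: Lemma~\ref{l:refined-vdC} saturates at $\alpha=1$, so packaging the closeness of the two phases into a single order of vanishing of the amplitude $G=(\eD^{\ii ht}-1)F$ throws away all but one of the $q$ orders available, and on the far side you are restricted to order $p-1$ van der Corput.

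The paper avoids this by not forming the amplitude $G$ at all. It treats the two integrals separately and exploits van der Corput at order $p-2$ rather than $p-1$; the price is having to localize not only near $\xi_*$ but also near the zeros $\xi_0$ of $\omega^{(p-2)}$ and $\xi_1$ of $\omega_0^{(p-2)}$. Lemma~\ref{l:vanishing} is precisely what makes this work: it guarantees $|\xi_0-\xi_1|\lesssim|\xi_0-\xi_*|^{q-p+1}$, so the two excised neighbourhoods can be handled jointly and one then optimizes over both the inner scale near $\xi_*$ and the scale near $\xi_0$, with a trichotomy on the size of $|\xi_0-\xi_*|$. That extra level of van der Corput is exactly what restores the missing factor and delivers $(q-1)/(q(p-1))$.
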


\begin{lemma}\label{l:singular-phase-vdC}
Let $q$ be an odd integer larger than $3$, $\kappa>0$ and $M\in\R_+$. There exist positive $\eps_0$ and $C$ such that for any closed interval $I$ and any smooth functions $\omega:\,I\to\R$, $\omega_0:\,I\to\R$ and $F:I\to\C$ such that
\begin{itemize}
\item $|\omega^{'''}|$ and $|\omega_0^{'''}|$ are larger than $\kappa$ and smaller than $M$
\item $|\omega^{''}|$ and $|\omega_0^{''}|$ vanish at $\xi_*\in I$ such that $I\subset [\xi_*-\eps_0,\xi_*+\eps_0]$
\item $G:=F\times|\cdot-\xi_*|$ and $H:=F'\times|\cdot-\xi_*|^{2}$ are bounded
\item $|\omega^{(\ell)}-\omega_0^{(\ell)}|\times|\cdot-\xi_*|^{-q+\ell}$ is bounded by $M$ for any $0\leq\ell\leq 2$
\end{itemize}
then for any $|t|\geq1$
$$
\left|\int_I (\eD^{\ii\,\omega(\xi)t}- \eD^{\ii\,\omega_0(\xi)t})\ F(\xi)\ \dd\xi\right|
\ \leq\ 
\ds\frac{C}{|t|^{(q-3)/(2q)}}\ \left[\,\sup_I|G|\,+\,\sup_I|H|\,\right]\,.
$$
\end{lemma}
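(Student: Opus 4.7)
The strategy parallels the proof of Lemma~\ref{l:phase-vdC}, adapted to handle the amplitude singularity at $\xi_*$. As a preliminary reduction, shrink $\eps_0$ so that on $I$ both $\omega''$ and $\omega_0''$ behave essentially as $\omega'''(\xi_*)(\xi-\xi_*)$; this uses that $\omega-\omega_0$ vanishes to order $q>3$ at $\xi_*$ so that $\omega'''(\xi_*)=\omega_0'''(\xi_*)$, together with continuity of the third derivatives. Then introduce a scale $\delta>0$ (to be optimized in terms of $|t|$) and split $I=I_{\rm in}\cup I_{\rm out}$, where $I_{\rm in}:=I\cap[\xi_*-\delta,\xi_*+\delta]$ and $I_{\rm out}:=I\setminus I_{\rm in}$.

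On the inner piece, use the pointwise bound $|\eD^{\ii\omega t}-\eD^{\ii\omega_0 t}|\le|t||\omega-\omega_0|\le M|t||\xi-\xi_*|^q$ combined with $|F(\xi)|\le\sup_I|G|/|\xi-\xi_*|$ to bound the contribution by $C|t|\delta^q\sup_I|G|$. Here the order-$q$ vanishing of $\omega-\omega_0$ exactly compensates the $|\xi-\xi_*|^{-1}$ singularity of $F$, leaving an integrable factor $|\xi-\xi_*|^{q-1}$. On the outer piece, $F$ is a bounded function with $\sup_{I_{\rm out}}|F|\le\sup_I|G|/\delta$ and $\int_{I_{\rm out}}|F'|\le2\sup_I|H|/\delta$, thanks precisely to the hypothesis on $H$. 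Apply Lemma~\ref{l:phase-vdC} with $p=3$ to a smoothly cut-off version of $F$ (vanishing on $[\xi_*-\delta/2,\xi_*+\delta/2]$, equal to $F$ outside $[\xi_*-\delta,\xi_*+\delta]$, whose $L^\infty$ norm and derivative in $L^1$ are controlled by the same bounds up to a universal constant); this yields an outer contribution dominated by $C(\sup_I|G|+\sup_I|H|)/(\delta\,|t|^{(q-1)/(2q)})$. Balancing the two contributions by choosing $\delta$ as an appropriate negative power of $|t|$ then delivers the announced decay rate.

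The main obstacle is to guarantee that the outer estimate loses only a \emph{single} factor of $\delta^{-1}$; this is exactly what the hypothesis phrased in terms of $H$ (rather than a crude pointwise bound on $F'$) is designed to provide. A direct application of Lemma~\ref{l:vdC} with $p=2$ on $I_{\rm out}$ exploiting $|\omega''|\ge\tfrac{\kappa}{2}\delta$ would produce an additional $\delta^{-1/2}$ factor and lead to a strictly worse rate; recovering the sharp exponent $(q-3)/(2q)$ requires invoking the refined phase-comparison rate $|t|^{-(q-1)/(2q)}$ of Lemma~\ref{l:phase-vdC}, which simultaneously leverages the non-degeneracy of the third derivative and the high-order matching of the two phases at $\xi_*$. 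A careful tracking of how the amplitude singularity of $F$ interacts with the splitting scale is the remaining technical work, and follows the template already established in the proof of Lemma~\ref{l:phase-vdC}.
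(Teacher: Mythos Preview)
Your overall strategy is sound, but the balancing step does not deliver the claimed rate. With the inner bound $|t|\delta^q$ and the outer bound $\delta^{-1}|t|^{-(q-1)/(2q)}$, equating gives $\delta^{q+1}=|t|^{-(3q-1)/(2q)}$ and hence a final decay of order $|t|^{-(q-3)/(2(q+1))}$, which is strictly weaker than the announced $|t|^{-(q-3)/(2q)}$. The loss is exactly the black-box use of Lemma~\ref{l:phase-vdC}: that lemma controls the outer integral through $\sup_{I_{\rm out}}|F|+\int_{I_{\rm out}}|F'|\lesssim\delta^{-1}$, i.e.\ it records only the worst behavior of $F$ at the inner boundary $|\xi-\xi_*|=\delta$, whereas the dominant oscillatory cancellation takes place near the (possible) zero $\xi_0$ of $\omega'$, where $|F|\sim|\xi_0-\xi_*|^{-1}$ may be much smaller.

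The paper does not apply Lemma~\ref{l:phase-vdC} as a black box on the outer piece. Instead it reopens the integration-by-parts underlying the van der Corput argument and keeps the pointwise decay of $F$ inside the estimate, producing terms such as $(\delta'|\xi_0-\xi_*|^2|t|)^{-1}$ rather than $(\delta'|\xi_0-\xi_*||t|)^{-1}\cdot\delta^{-1}$. This forces a case analysis on the location of $\xi_0$ (no zero or $|\xi_0-\xi_*|\lesssim|t|^{-2/(q+3)}$; intermediate $|\xi_0-\xi_*|$; $|\xi_0-\xi_*|\gtrsim|t|^{-1/q}$) and the introduction of a second scale $\delta'$ near $\xi_*$ in addition to a scale $\delta$ near $\xi_0$. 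The worst case is $|\xi_0-\xi_*|\sim|t|^{-1/q}$, which is precisely where your black-box bound throws away the factor $|\xi_0-\xi_*|^{-1}\sim|t|^{1/q}$ that would upgrade $(q-1)/(2q)$ to $(q-3)/(2q)$. To fix your argument you must either carry out this finer analysis or, equivalently, prove a variant of Lemma~\ref{l:phase-vdC} whose right-hand side is expressed in terms of $\sup|G|$ and $\sup|H|$ directly rather than $\sup|F|+\int|F'|$.
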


Those are directly applied to obtain Theorem~\ref{th:qth}. To conclude the proof of Theorem~\ref{th:3rd}, we only need to add that by using classical van der Corput lemma, one proves that one may actually replace $\SigW_3(t)\chi(\iD^{-1}\d_x)$ with $\SigW_3(t)$ at the level of comparison we aim at.

To prove Theorem~\ref{th:qth-phase}, the main change is that to apply Lemma~\ref{l:refined-vdC} and reach the required level of approximation one may only replace $\sum_{j\in\{0,1,2\}}\ko{j}(t,x,y)$ with
$$
\sum_{j\in\{0,1,2\}}
\int_{-\pi}^\pi \eD^{\iD\xi(x-y)+\lambda_j(\xi)\,t}\,\chi(\xi)\ \beta^{(j)}(0)\ 
\left(\overline{\tphi_j(0,y)}\,+\,\xi\,\overline{\d_\xi\tphi_j(0,y)}\right)\ \dD \xi
$$
and $\sum_{j\in\{0,1,2\}}\go{j}(t,x,y)$ with
$$
\sum_{j\in\{0,1,2\}}
\int_{-\pi}^\pi \eD^{\iD\xi(x-y)+\lambda_j(\xi)\,t}\,\chi(\xi)\ \beta^{(j)}(0)\ 
\left\langle\iD\left(\d_\xi\tphi_j(0,\,\cdot\,)+\xi\,\d_\xi^2\tphi_j(0,\,\cdot\,\right);\uU_x\right\rangle\ \dD \xi
$$
so that the theorem is proved with\footnote{Note that by using the invariance of the dual spectral problem under $(\lambda,\xi,\tphi)\mapsto (\overline{\lambda},-\xi,\overline{\tphi})$ one shows that $\d_\xi^{\ell}\tphi_j(0,\cdot)$ is real when $\ell$ is even and purely imaginary when $\ell$ is odd.}
$$
\begin{pmatrix}\tpsi_0\\\tM_0\\\tP_0\end{pmatrix}
\,=\,\begin{pmatrix}\uk\d_x\psi_0\\V_0\\\uU\,V_0\end{pmatrix}
\,+\,\sum_{j\in\{0,1,2\}}
\d_x\left(\,\overline{\iD\d_\xi\tphi_j(0,\cdot)}\,V_0
\,-\,\langle\,\d_\xi^2\tphi_j(0,\,\cdot\,);\uU_x\rangle\,\d_x\psi_0\right)\beta_j(0)\,.
$$

At last, to prove formula~\eqref{hf-data} for the case where $\psi_0$ is not low-frequency, from the reduction of Subsection~\ref{s:slow-reduction} it follows that we only need to show that at our level of description the initial data for modulations systems 
$$
\begin{pmatrix}\ds-\uk(\d_x\psi_0)^{HF}\\\ds
\uU_x(\psi_0)^{HF}\\\ds
\uU\,\uU_x(\psi_0)^{HF}\end{pmatrix}
\qquad\textrm{may be replaced with}\qquad
\begin{pmatrix}0\\\ds
-\left(\uU-\int_0^1\uU\right)\,\d_x\psi_0\\\ds
-\left(\tfrac12\uU^2-\int_0^1\tfrac12\uU^2\right)\,\d_x\psi_0\end{pmatrix}\,.
$$
This is obtained by a repeated use of Lemma~\ref{l:refined-vdC} that implies that initial data that are derivatives of localized data --- in particular high-frequency data --- are negligible. To begin with, an integration by part shows that the former data may be replaced with 
$$
\begin{pmatrix}\ds0\\\ds
-\uU\,(\d_x\psi_0)^{HF}\\\ds
-\tfrac12\uU^2\,(\d_x\psi_0)^{HF}\end{pmatrix}\,.
$$
Then the claim follows from the fact that $(\int_0^1\uU)\,(\d_x\psi_0)^{HF}$, $(\int_0^1\tfrac12\uU^2)\,(\d_x\psi_0)^{HF}$, $(\uU-\int_0^1\uU)\,(\d_x\psi_0)^{LF}$ and $(\tfrac12\uU^2-\int_0^1\tfrac12\uU^2)\,(\d_x\psi_0)^{LF}$ are all high-frequency.


\appendix

\section{Numerical investigation of dispersive spectral stability}\label{s:A_numerics}\label{s:A}

As already discussed in the introduction our study of asymptotic decay and leading-order behavior shall rely on good dispersive properties of the dynamics originating in 
\be
\label{A}
\begin{array}{l}
\textrm{At no nonzero point of spectral curves the second-order}\\
\textrm{derivative with respect to Floquet exponents vanish}\\ 
\textrm{and the third-order derivatives do not vanish at zero.}
\end{array}\tag{A}
\ee

The asymptotic decay part, as stated in Theorem~\ref{th:asymptotic}, may be obtained under the slightly weaker condition~\ref{A_intro}. Yet as variations needed along the proof are mostly of notational order\footnote{As follows from large-eigenvalue asymptotics there are only a finite number of points where the second-order derivatives could possibly vanish so that most of the extra trouble consists in introducing notation for those points and an adapted finite partition of unity in the extended Floquet variable $\xi$.} we perform the full analysis under condition~\ref{A}. This is also consistent with the fact that numerical experiments suggest that condition~\ref{A} does hold for all waves. We give now some pieces of evidence to support this claim. 

Recall first that by symmetries of the equation it is sufficient to analyze a one-parameter family given by the square of cnoidal functions. To ease comparisons corresponding to different periods we plot here the spectrum of operators that have not been scaled to be of period one and hence analyze the spectrum of $\tL_{m,\xi}$ acting through
$$
\tL_{m,\xi}g\,=\,4\times(2m-1)\d_xg+\d_x(12\,m\,\cn^2(\,\cdot\,,m)\,g)-\d_x^3 g
$$
for $\xi$ varying in a suitable Brillouin zone depending on $m$, where $m$ is the square of a elliptic modulus. Also we use "glued" representations of the spectrum as introduced in the kernel representation subsection, Subsection~\ref{s:Green}. At last, note that by Hamiltonian and real symmetries it is sufficient to investigate only the upper half of the spectrum.

In Figure~\ref{line} we show half of the "line" spectrum for $m=0.025$, $0.05$, $0.1$, $0.2$, $0.3$, $0.4$, $0.5$, $0.6$, $0.7$, $0.8$, $0.9$ and $0.95$. In Figure~\ref{third} we plot corresponding third-order derivatives. Likewise for the same parameters we show half of the "loop" spectrum in Figure~\ref{loop} and corresponding second-order derivatives in Figure~\ref{second}. The intermediate Lax spectral parametrization --- used to compute our graphs --- being singular near zero and infinity we avoid to get to close to singularities $0$ at both ends of the half-loop and $0$ and $\iD\infty$ at the end of the half-line.

Beyond the mere observation that condition~\ref{A} appears clearly to hold, a few comments are in order. We stress that one may actually prove most of the following claims by inspecting distinguished limits $m\to0$ (small amplitude), $m\to1$ (homoclinic/solitary wave limit), 
or $|\lambda|\to\infty$ (fast).

\begin{enumerate}
\item In the limit $m\to0$, the full spectrum (line and loop) converges locally uniformly far from singularities $|\lambda|=0$ and $|\lambda|=\infty$ to the Fourier spectral curve $\xi\mapsto -4\iD\xi-(\iD\xi)^3=\iD(-4\xi+\xi^3)$. In particular the maximal height of the curve converges to $16/(3\sqrt(3))$ which is approximately $3.0792$, the second order derivative converges to $\xi\mapsto 6\xi$ and the third to $\xi\mapsto 6$.
\item In the limit $m\to1$, the "line" spectrum converges locally uniformly far from singularities to the Fourier spectral curve $\xi\mapsto 4\iD\xi-(\iD\xi)^3=\iD(4\xi+\xi^3)$. In turn, the loop spectrum shrinks to the embedded eigenvalue $0$, associated with invariance by translation (in space).
\item In the large spectrum limit the "line" spectrum is equivalent to $\xi^3$ and the third-order derivative converges to $6$.
\end{enumerate}

\begin{figure}[htbp]
\begin{center}
$
\begin{array}{lr}
(a)\includegraphics[scale=0.25]{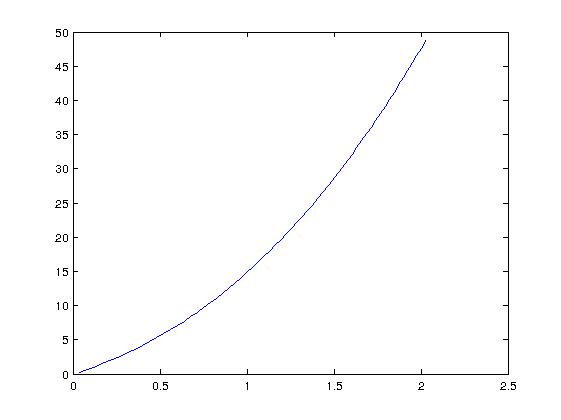}
&
(b)\includegraphics[scale=0.25]{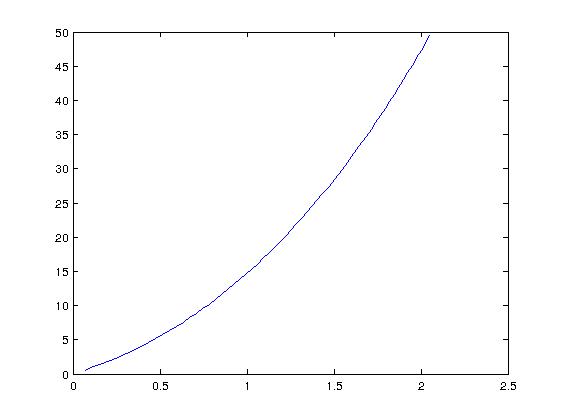}
\\
(c)\includegraphics[scale=0.25]{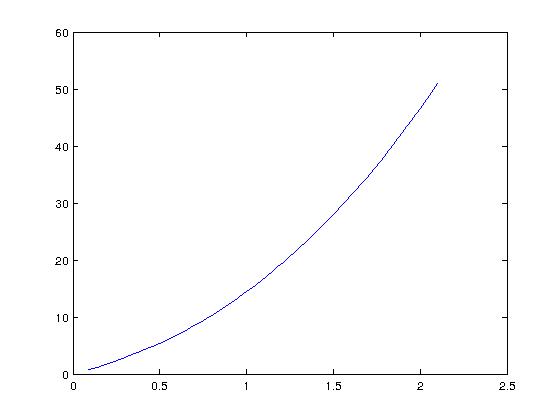}
&
(d)\includegraphics[scale=0.25]{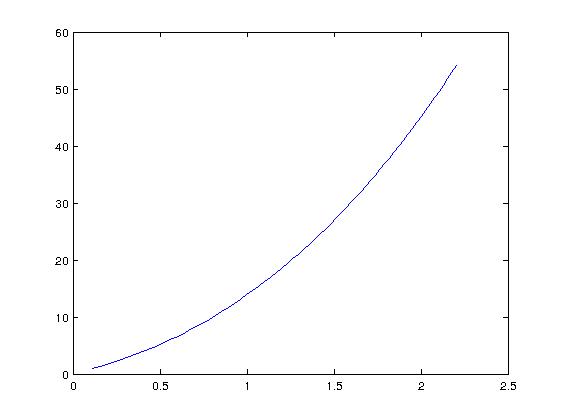}
\\
(e)\includegraphics[scale=0.25]{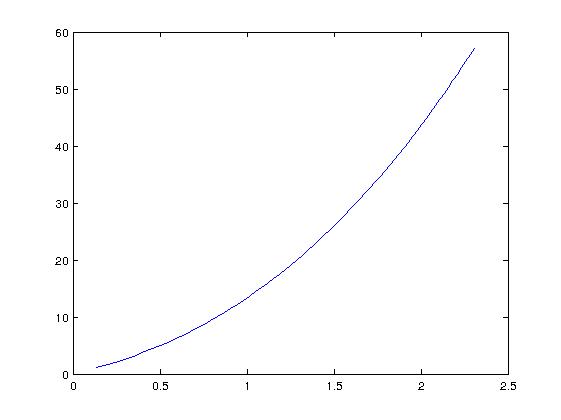}
&
(f)\includegraphics[scale=0.25]{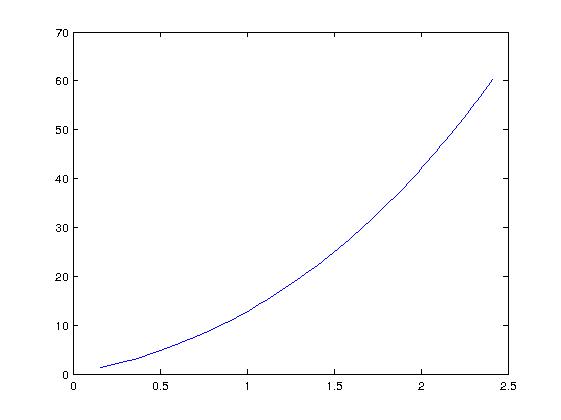}
\\
(g)\includegraphics[scale=0.25]{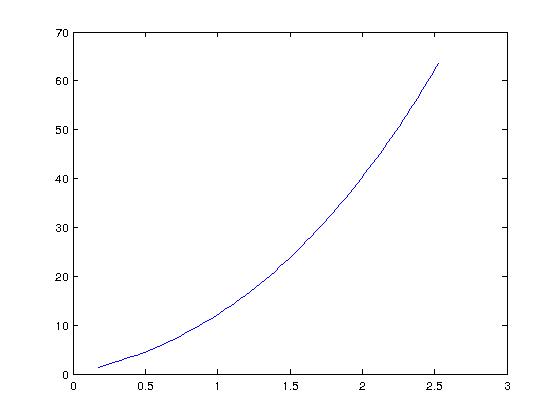}
&
(h)\includegraphics[scale=0.25]{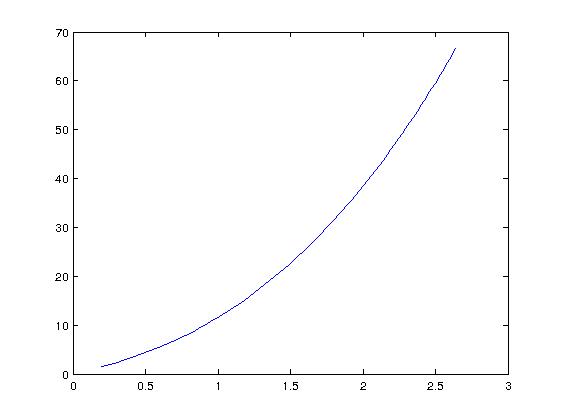}
\\
(i)\includegraphics[scale=0.25]{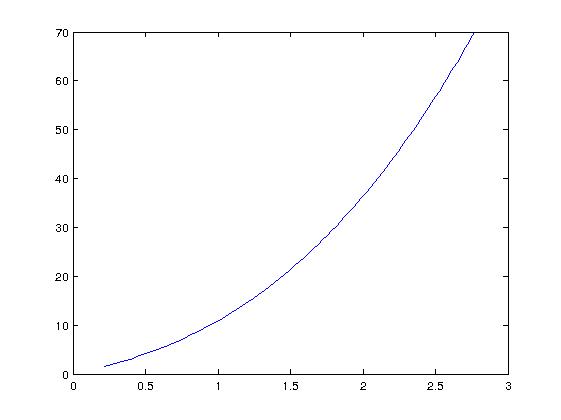}
&
(j)\includegraphics[scale=0.25]{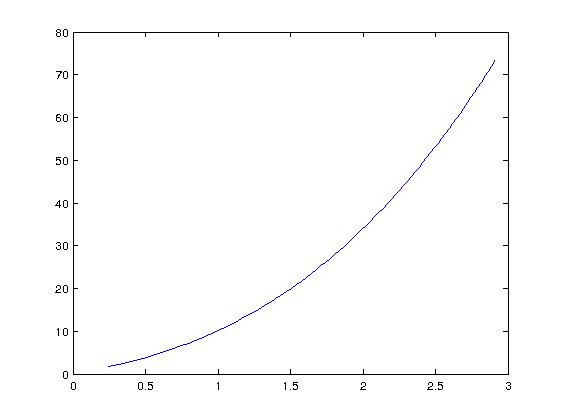}
\\
(k)\includegraphics[scale=0.25]{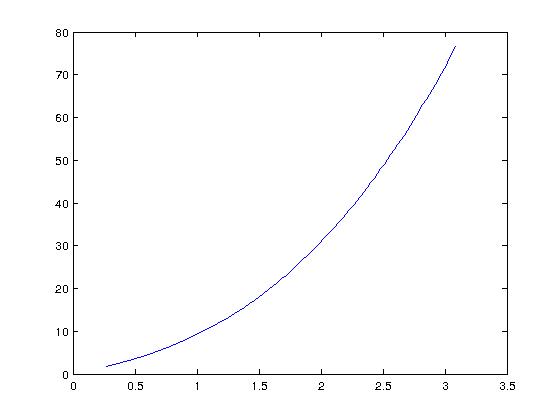}
&
(l)\includegraphics[scale=0.25]{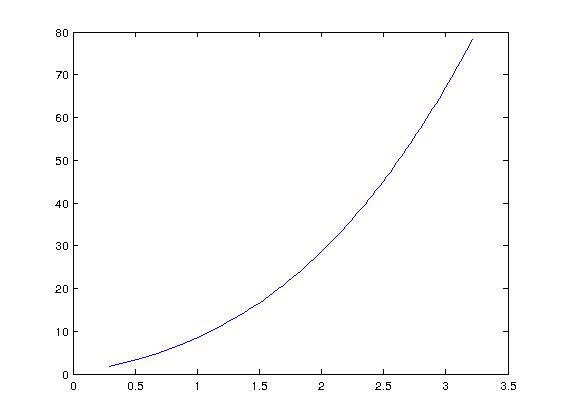}
\end{array}
$
\end{center}
\caption{Imaginary part of the spectrum along the upper half-line versus extended Floquet exponent for $m=0.025$, $0.05$, $0.1$, $0.2$, $0.3$, $0.4$, $0.5$, $0.6$, $0.7$, $0.8$, $0.9$ and $0.95$.}
\label{line}
\end{figure}

\begin{figure}[htbp]
\begin{center}
$
\begin{array}{lr}
(a)\includegraphics[scale=0.25]{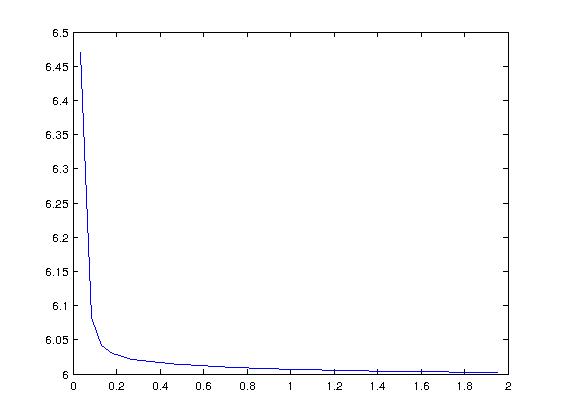}
&
(b)\includegraphics[scale=0.25]{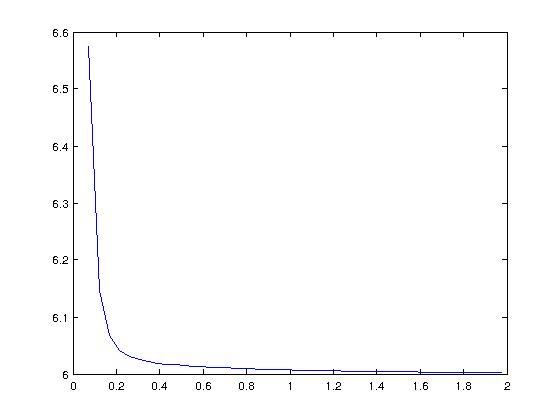}
\\
(c)\includegraphics[scale=0.25]{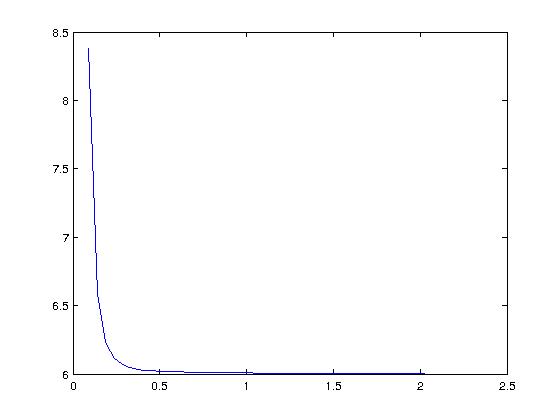}
&
(d)\includegraphics[scale=0.25]{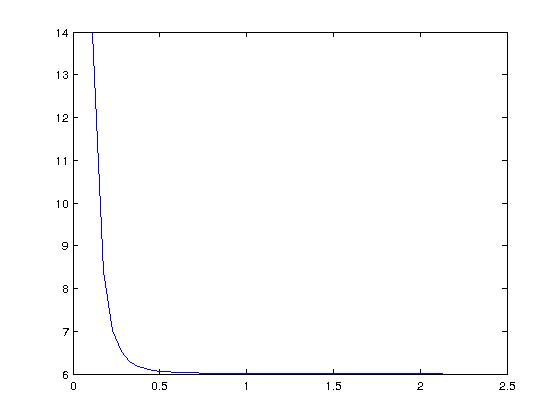}
\\
(e)\includegraphics[scale=0.25]{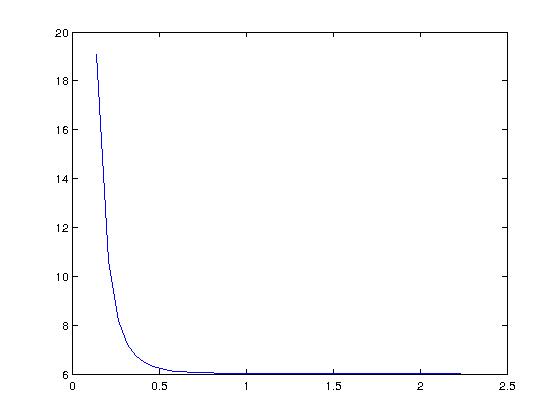}
&
(f)\includegraphics[scale=0.25]{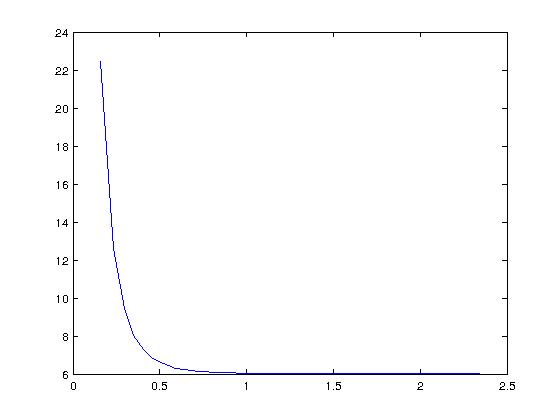}
\\
(g)\includegraphics[scale=0.25]{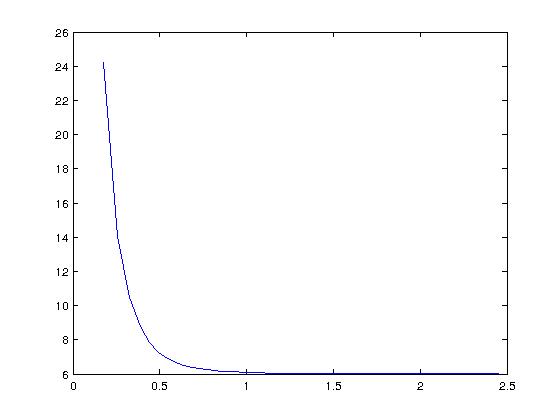}
&
(h)\includegraphics[scale=0.25]{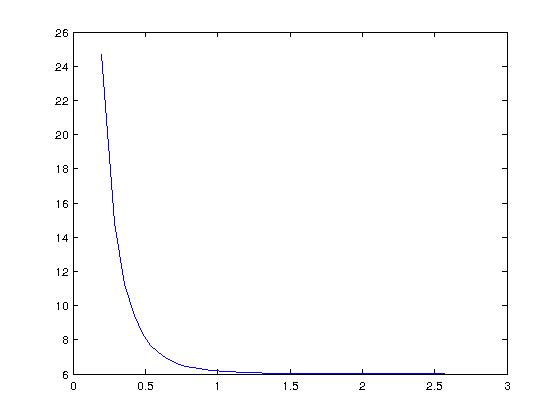}
\\
(i)\includegraphics[scale=0.25]{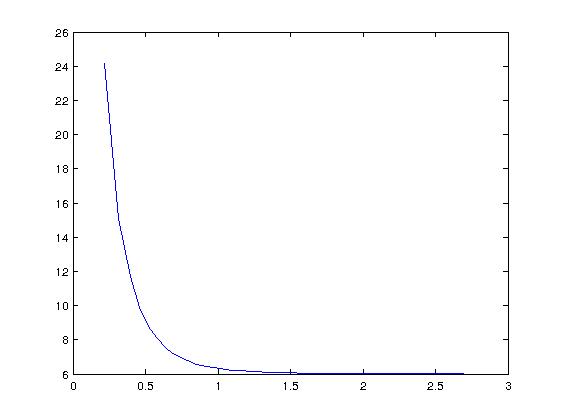}
&
(j)\includegraphics[scale=0.25]{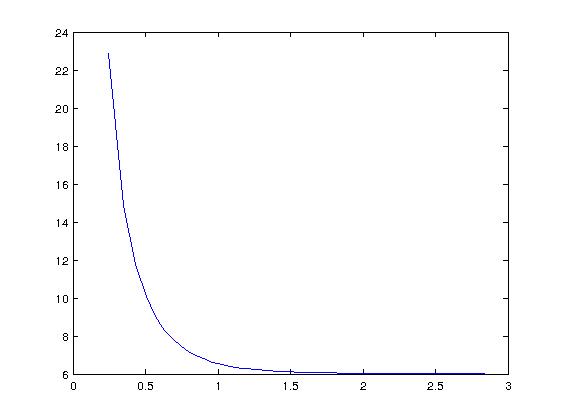}
\\
(k)\includegraphics[scale=0.25]{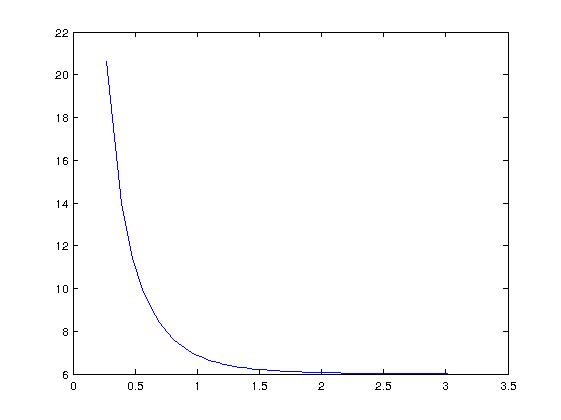}
&
(l)\includegraphics[scale=0.25]{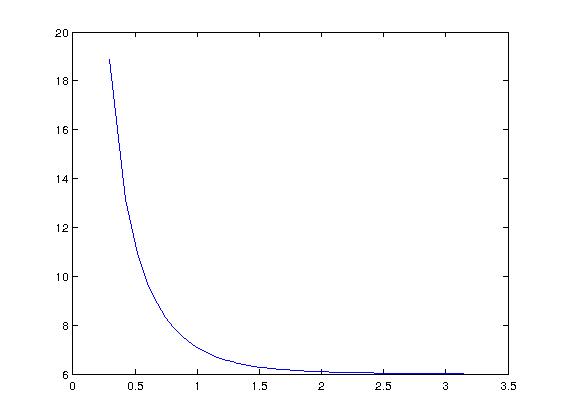}
\end{array}
$
\end{center}
\caption{Third-order derivative with respect to the Floquet exponent of the imaginary part of the spectrum along the upper half-line versus extended Floquet exponent for $m=0.025$, $0.05$, $0.1$, $0.2$, $0.3$, $0.4$, $0.5$, $0.6$, $0.7$, $0.8$, $0.9$ and $0.95$.}
\label{third}
\end{figure}

\begin{figure}[htbp]
\begin{center}
$
\begin{array}{lr}
(a)\includegraphics[scale=0.25]{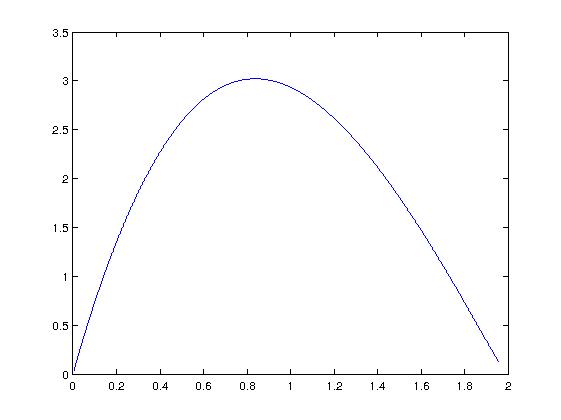}
&
(b)\includegraphics[scale=0.25]{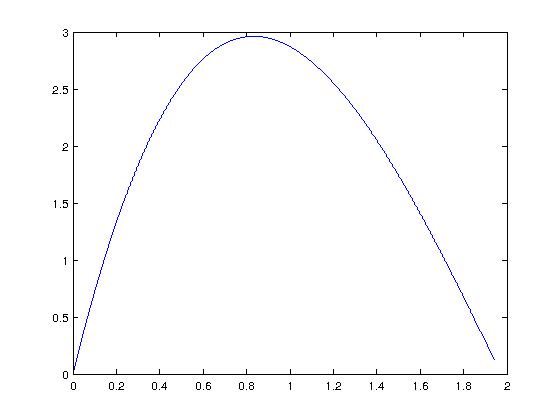}
\\
(c)\includegraphics[scale=0.25]{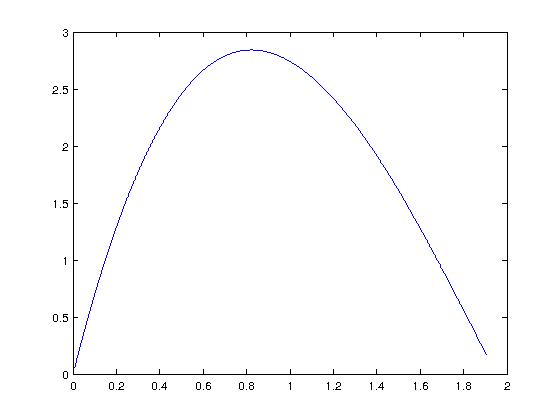}
&
(d)\includegraphics[scale=0.25]{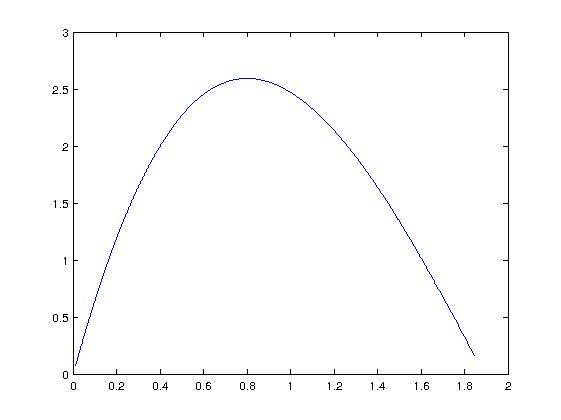}
\\
(e)\includegraphics[scale=0.25]{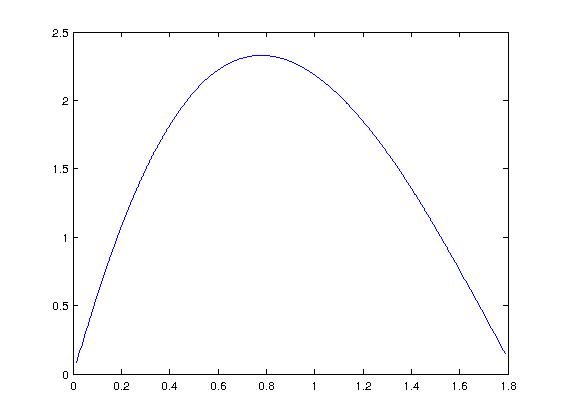}
&
(f)\includegraphics[scale=0.25]{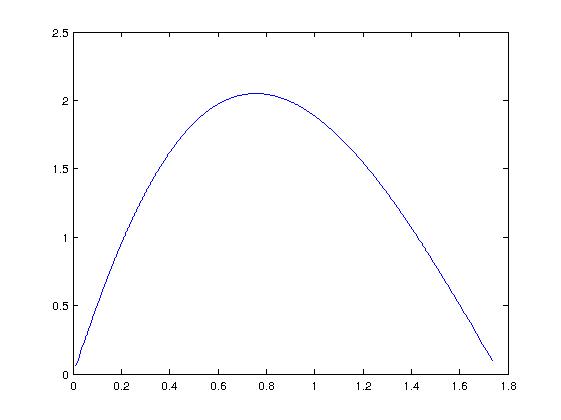}
\\
(g)\includegraphics[scale=0.25]{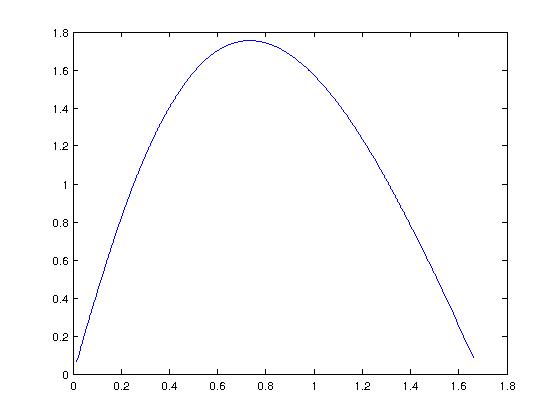}
&
(h)\includegraphics[scale=0.25]{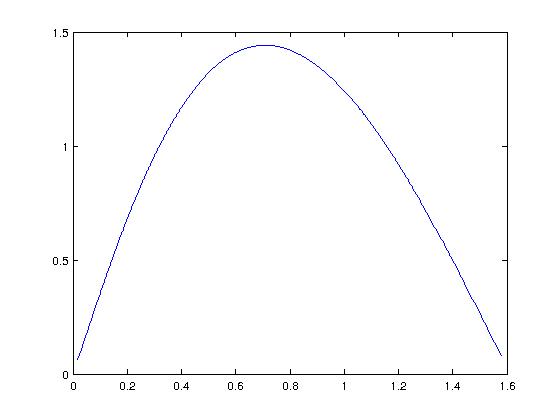}
\\
(i)\includegraphics[scale=0.25]{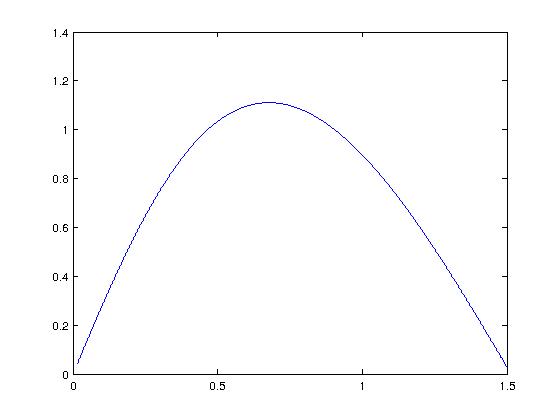}
&
(j)\includegraphics[scale=0.25]{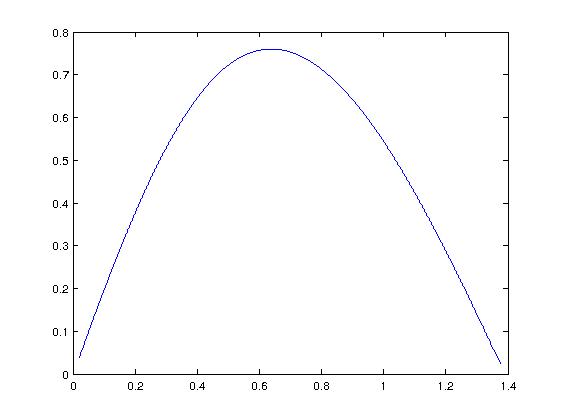}
\\
(k)\includegraphics[scale=0.25]{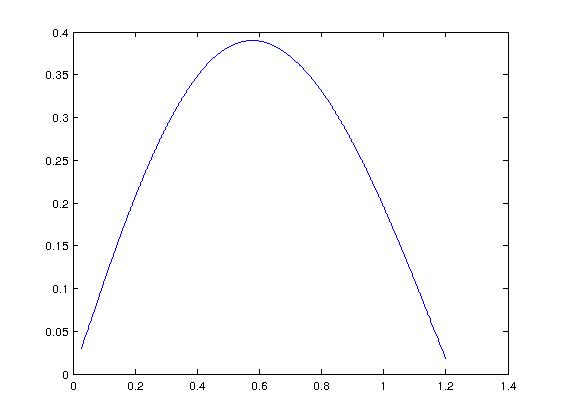}
&
(l)\includegraphics[scale=0.25]{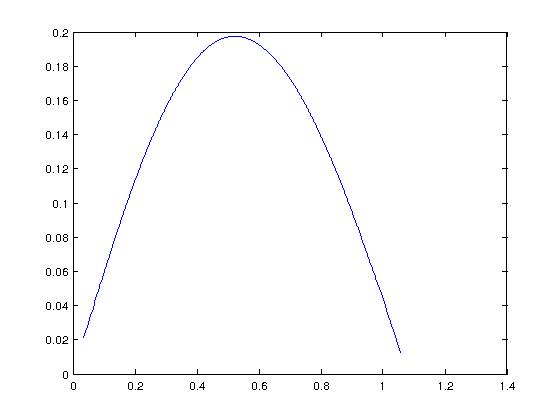}
\end{array}
$
\end{center}
\caption{Imaginary part of the spectrum along the upper half-loop versus extended Floquet exponent for $m=0.025$, $0.05$, $0.1$, $0.2$, $0.3$, $0.4$, $0.5$, $0.6$, $0.7$, $0.8$, $0.9$ and $0.95$.}
\label{loop}
\end{figure}

\begin{figure}[htbp]
\begin{center}
$
\begin{array}{lr}
(a)\includegraphics[scale=0.25]{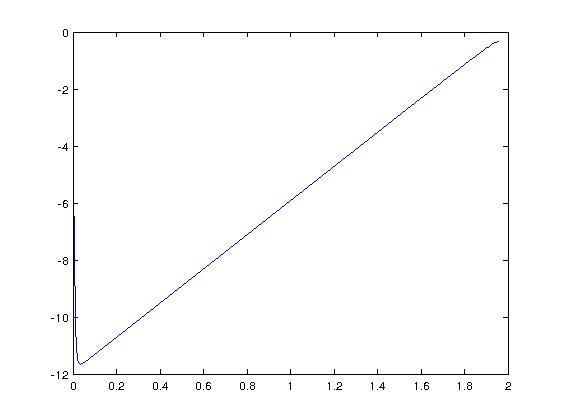}
&
(b)\includegraphics[scale=0.25]{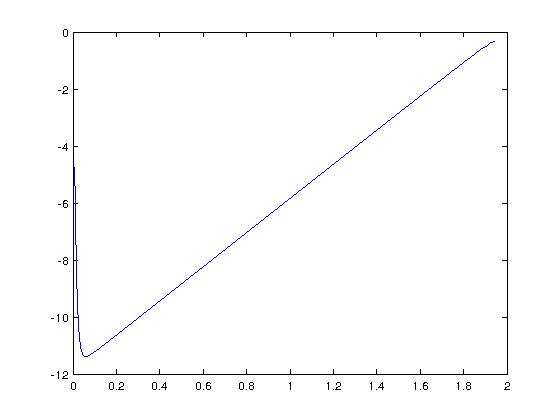}
\\
(c)\includegraphics[scale=0.25]{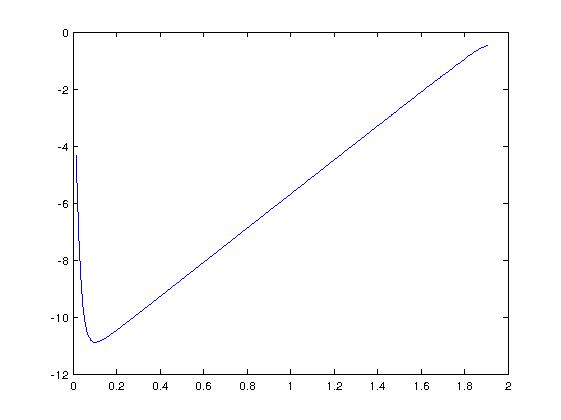}
&
(d)\includegraphics[scale=0.25]{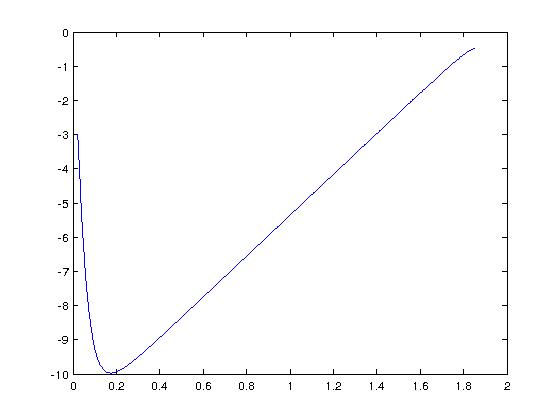}
\\
(e)\includegraphics[scale=0.25]{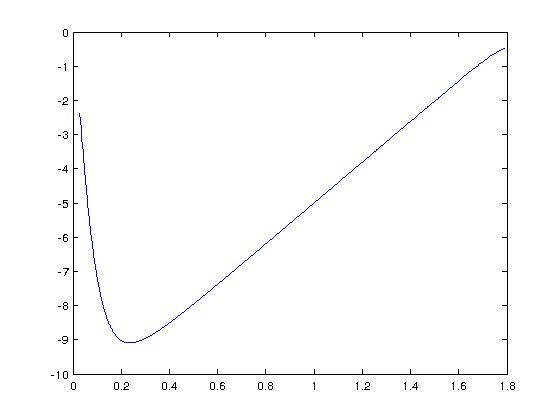}
&
(f)\includegraphics[scale=0.25]{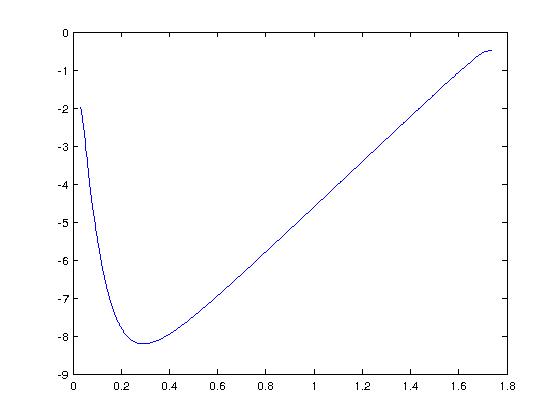}
\\
(g)\includegraphics[scale=0.25]{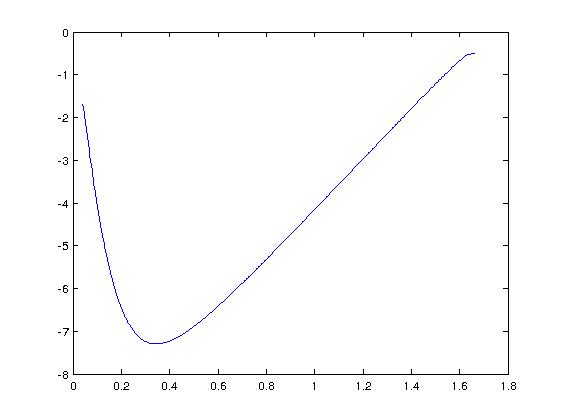}
&
(h)\includegraphics[scale=0.25]{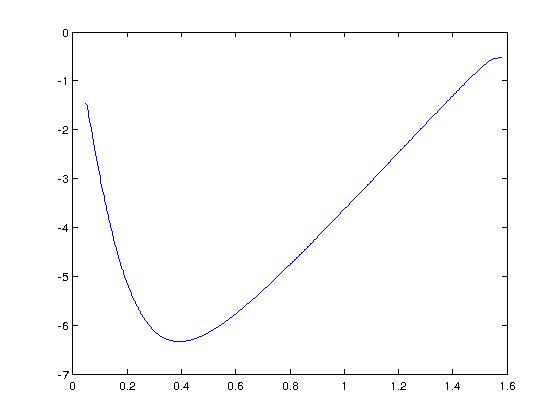}
\\
(i)\includegraphics[scale=0.25]{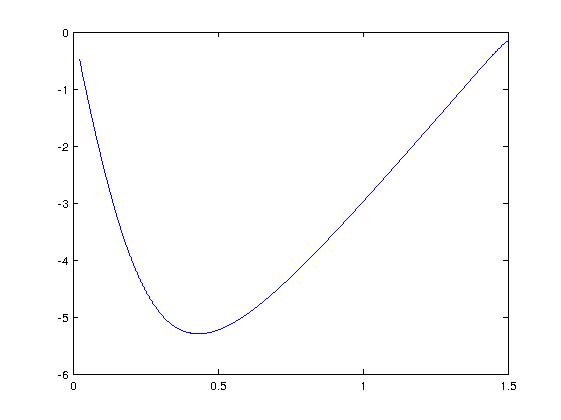}
&
(j)\includegraphics[scale=0.25]{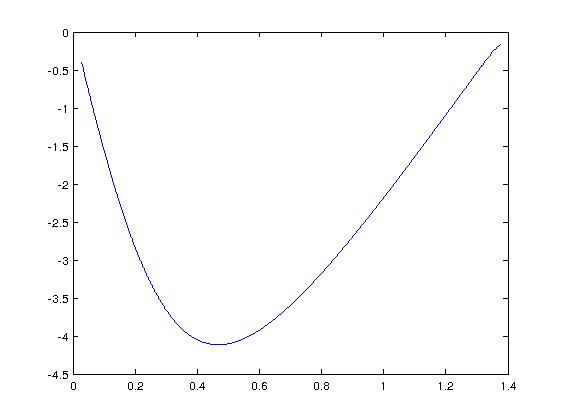}
\\
(k)\includegraphics[scale=0.25]{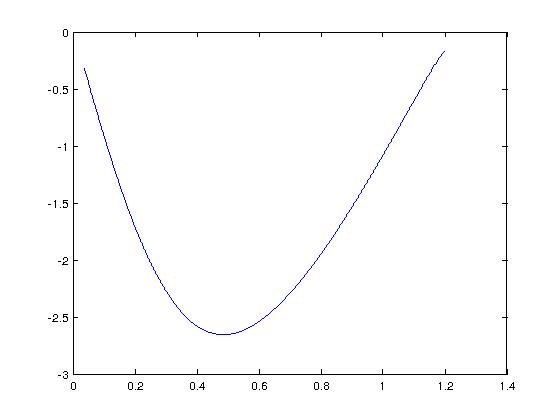}
&
(l)\includegraphics[scale=0.25]{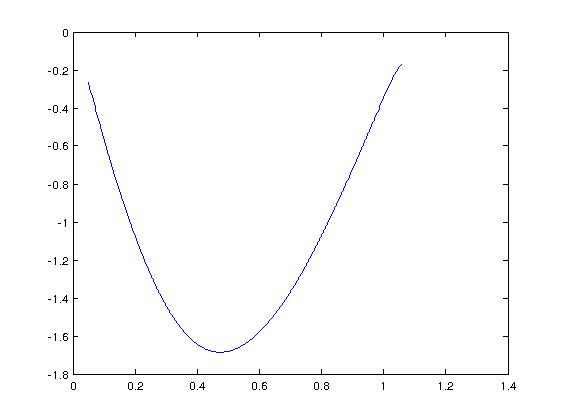}
\end{array}
$
\end{center}
\caption{Second-order derivative with respect to the Floquet exponent of the imaginary part of the spectrum along the upper half-loop versus extended Floquet exponent for $m=0.025$, $0.05$, $0.1$, $0.2$, $0.3$, $0.4$, $0.5$, $0.6$, $0.7$, $0.8$, $0.9$ and $0.95$.}
\label{second}
\end{figure}


\section{Proofs of oscillatory integrals estimates}\label{s:oscillatory}

For the sake of completeness we provide here proofs of elementary estimates on oscillatory estimates used along our proofs of main results. As a warming up we first give a proof of the classical Lemma~\ref{l:vdC}.

\begin{proof}
We begin with the easiest case when $p=1$. Assume first that $I$ is bounded. An integration by parts gives
$$
\int_I \eD^{\ii\,a}\ F
\,=\,\left[\eD^{\ii\,a}\,\frac{F}{\ii a'}\right]_I
\,-\,\int_I \eD^{\ii\,a}\ \left(\frac{F'}{\ii a'}-\frac{F\,a''}{\ii (a')^2}\right)
$$
which is bounded by
$$
\frac{2\,\sup_I|F|}{\inf_I|a'|}\,+\,\frac{\sup_I|F|\,+\,\int_I|F'|}{\inf_I|a'|}
$$
since monotonicity yields
$$
\int_I\frac{|a''|}{(a')^2}\,\leq\,\frac{1}{\inf_I|a'|}\,.
$$
The case where $I$ is unbounded is then obtained by a simple limiting argument using that $|a'|$ is coercive.

We turn now to the case when $p\geq2$. The assumption implies that $|a'|$ is coercive and monotone at infinity, thus the reduction to the bounded case may be carried out as in the case when $p=1$. Therefore we only deal with the case when $I$ is bounded. For this case we argue by induction. Assume the case $p-1$ has been obtained. The assumption implies that there exists a unique $\xi_*\in I$ minimizing $|a^{(p-1)}|$ on $I$. Now, for any $\delta>0$, the assumptions of the $p-1$ case are satisfied on each connected component of $I_\delta:=I\setminus(\xi_*-\delta,\xi_*+\delta)$ with 
$$
\inf_{I_\delta}\,|a^{(p-1)}|\,\geq\,\delta\,\inf_{I}\,|a^{(p)}|
$$ 
therefore by splitting $I$ as $I_\delta$ and $I\setminus I_\delta$
$$
\left|\int_I \eD^{\ii\,a}\ F\right|
\,\leq\,\frac{2\,C_{p-1}}{(\delta\,\inf_{I}\,|a^{(p)}|)^{1/(p-1)}}\ \left[\,\sup_I|F|\,+\,\int_I|F'|\,\right]\,+\,2\,\delta\,\sup_I|F|
$$
where $C_{p-1}$ denotes the constant obtained in the $(p-1)$ case. Setting\footnote{This is motivated by a minimization in $\delta$ of the estimate but provides the optimal value only up to a multiplying constant that depends only on $p$. As is customary we chose a non optimal value to receive expressions as simple as possible. We shall repeat the same pattern often in the present section.} 
$$
\delta\,=\,\left(\inf_{I}\,|a^{(p)}|\right)^{-1/p}
$$
in the above estimate achieves the proof.
\end{proof}

We now prove Lemma~\ref{l:refined-vdC}.

\begin{proof}
For writing convenience we set
$$
K\ =\ \sup_I|F|\,+\,\int_I|F'|\,+\,\sup_I|G|\qquad
\textrm{and}\qquad
A\ =\ \inf_I|a^{(p)}|\,.
$$
If $A\leq1$ the results follows directly from Lemma~\ref{l:vdC} so we assume $A\geq1$. We detail only the case where $\xi_*$ belongs to the boundary of $I$. The general result is then obtained by applying this special case to $I\cap[\xi_*,\infty[$ and $I\cap ]-\infty,\xi_*]$ and doubling the constant derived in the special case.

In this special case it follows that $|a^{(p-2)}|$ vanishes at most once and that $a^{(p-2)}$ is monotone. If it exists we denote the point where $|a^{(p-2)}|$ vanishes as $\xi_0$. If either $|a^{(p-2)}|$ does not vanish or if $|\xi_0-\xi_*|\leq \delta_1$ with
$\delta_1=A^{-1/(p+\alpha\,(p-2))}$ we derive
$$
\inf_{I\cap (\R\setminus(\xi_*-2\,\delta_1,\xi_*+2\,\delta_1))} |a^{(p-2)}| \,\geq\, \tfrac32\,A\,\delta_1^2
$$
that yields the bound
$$
\frac{K}{\alpha+1}\,(2\delta_1)^{\alpha+1}\,+\,\frac{C\,K}{(\tfrac32\,A\,\delta_1^2)^{1/(p-2)}}\,=\,\left(\frac{2^{\alpha+1}}{\alpha+1}\,+\,C\,\left(\frac23\right)^{\tfrac{1}{p-2}}\right)\,K\,A^{-(\alpha+1)/(p+\alpha\,(p-2))}
$$
where $C$ is obtained from the $p-2$ case of Lemma~\ref{l:vdC}. When there is a zero $\xi_0$ and it satisfies $\delta_1\leq |\xi_0-\xi_*|\leq 1$ we use that when $0<\delta_2\leq|\xi_0-\xi_*|$ one may derive 
$$
\inf_{I\cap (\R\setminus(\xi_0-\delta_2,\xi_0+\delta_2))} |a^{(p-2)}| \,\geq\, \tfrac12\,A\,|\xi_0-\xi_*|\,\delta_2
$$
that yields the bound
$$
2\,K\,(2\,|\xi_0-\xi_*|)^{\alpha}\,\delta_2\,+\,\frac{2\,C\,K}{(\tfrac12\,A\,|\xi_0-\xi_*|\,\delta_2)^{1/(p-2)}}
$$
which by choosing 
$$
\delta_2\,=\,A^{-1/(p-1)}\, |\xi_0-\xi_*|^{-(1+\alpha\,(p-2))/(p-1)}
$$
leads to 
$$
\begin{array}{rcl}\ds
\left(2^{\alpha+1}\,+\,2\,C\right)K\,A^{-1/(p-1)}\,|\xi_0-\xi_*|^{(\alpha-1)/(p-1)}
&\leq&\ds
\left(2^{\alpha+1}\,+\,2\,C\right)K\,A^{-1/(p-1)}\,\delta_1^{(\alpha-1)/(p-1)}\\
&&\,=\ds\left(2^{\alpha+1}\,+\,2\,C\right)K\,A^{-(\alpha+1)/(p+\alpha\,(p-2))}\,.
\end{array}
$$
Note that the foregoing choice of $\delta_2$ is indeed available since $\delta_2\leq |\xi_0-\xi_*|$ follows from
$$
|\xi_0-\xi_*|\geq \delta_1\geq A^{-1/(p+\alpha\,(p-2))}\,.
$$
At last, when there is a zero $\xi_0$ and it satisfies $|\xi_0-\xi_*|\geq 1$ we alternatively rely on the fact that for any $0<\delta'_2\leq|\xi_0-\xi_*|$ we may also bound the quantity of interest by
$$
2\,K\,\delta'_2\,+\,\frac{2\,C\,K}{(\tfrac12\,A\,|\xi_0-\xi_*|\,\delta'_2)^{1/(p-2)}}
$$
then choose 
$$
\delta'_2\,=\,A^{-1/(p-1)}\, |\xi_0-\xi_*|^{-1/(p-1)}
$$
to derive the bound 
$$
2\,\left(1\,+\,C\right)K\,A^{-1/(p-1)}\,|\xi_0-\xi_*|^{-1/(p-1)}
\,\leq\,2\,\left(1\,+\,C\right)K\,A^{-1/(p-1)}\,.
$$
Note that the latter choice of $\delta'_2$ is legitimate since $\delta'_2\leq |\xi_0-\xi_*|$ stems from
$$
|\xi_0-\xi_*|\geq \delta_1\geq A^{-1/p}
$$
and that the dependence on $A$ of latter bound is indeed as least as good as the one obtained in the first step since $\alpha\leq1$ yields 
$$
\frac{\alpha+1}{p+\alpha\,(p-2)}\,\leq \frac{1}{p-1}\,.
$$
\end{proof}

An examination of the previous proof shows that the case $\alpha>1$ would yield the same bound as in the case $\alpha=1$ and that this bound is optimal.

We begin the proof of our last oscillating integral lemmas by providing a control on stagnating points in terms of variations of phases.

\bl\label{l:vanishing}
Let $I$ be an open interval, $\xi_*\in I$, $\rho>2$, $M\geq0$ and $\kappa>0$. There exist positive $\eps_0$ and $C_0$ such that if 
\begin{itemize}
\item $A:I\to \R$ and $B:I\to\R$ are $\cC^2$, and such that $A''$ and $B''$ are lower-bounded by $\kappa$ ;
\item $A'(\xi_*)=B'(\xi_*)=0$ ;
\item for any $\xi\in I$, $|A(\xi)-B(\xi)|\leq M|\xi-\xi_*|^\rho$
\end{itemize}
then if there exists $\xi_0\in I$ such that $A(\xi_0)=0$ and $|\xi_0-\xi_*|\leq \eps_0$ there also exists $\xi_1\in I$ such that $B(\xi_1)=0$ and $|\xi_0-\xi_1|\leq C_0 |\xi_0-\xi_*|^{\rho-1}$.
\el

\begin{proof}
We first pick $\eps_0'$ such that $[\xi_*-2\eps_0',\xi_*+2\eps_0']\subset I$.
Let $\xi_0$ be such that $A(\xi_0)=0$ and $|\xi_0-\xi_*|\leq \eps_0'$. Since $\rho>0$, if $\xi_0=\xi_*$ then $B(\xi_*)=0$ so that we may focus on the case where $\xi_0\neq \xi_*$. Then let $\sigma$ denote the sign of $\xi_0-\xi_*$. When $0\leq r\leq|\xi_0-\xi_*|$ on one hand
$$
\,A(\xi_0+\sigma\,r)\geq \tfrac12 \kappa(2|\xi_0-\xi_*|+r)r\geq \kappa\,|\xi_0-\xi_*|\,r
$$
and
$$
A(\xi_0-\sigma\,r)\leq -\tfrac12\kappa\,(2|\xi_0-\xi_*|-r)\,r\leq -\tfrac12\kappa\,|\xi_0-\xi_*|\,r
$$
and on the other hand
$$
|A(\xi_0+\sigma\,r)-B(\xi_0+\sigma\,r)|\leq
M\,(|\xi_0-\xi_*|+r)^\rho
$$
and
$$
|A(\xi_0-\sigma\,r)-B(\xi_0-\sigma\,r)|\leq
M\,(|\xi_0-\xi_*|-r)^\rho\,.
$$
From here the Intermediate Value Theorem yields the result provided one chooses $r=C_0|\xi_0-\xi_*|^{\rho-1}$ and ensures
$$
C_0|\xi_0-\xi_*|^{\rho-1}\leq |\xi_0-\xi_*|\,,\qquad
M\leq \tfrac12\kappa C_0
\qquad\textrm{and}\qquad
M\,2^\rho<\kappa C_0
$$
when $|\xi_0-\xi_*|\leq \eps_0$ for some $0<\eps_0\leq\eps_0'$. The latter may indeed be achieved by first choosing $C_0$ large enough to satisfy the two last constraints then $\eps_0$ small enough, since $\rho>2$.
\end{proof}

We now prove Lemma~\ref{l:phase-vdC}.

\begin{proof}
First we pick $\eps_0$ and $C_0$ given by Lemma~\ref{l:vanishing} applied to $A=\omega^{(p-2)}$ and $B=\omega_0^{(p-2)}$ with $\rho=q-p+2$. If necessary we restrict $\eps_0$ further to ensure that $C_0\eps_0^{\rho-2}\leq 1/4$. As in previous proofs without loss of generality we assume that $\xi_*$ belongs to the boundary of $I$. Also we denote by $\xi_0$ a possible zero of $\omega^{(p-2)}$ and when such a zero exists by $\xi_1$ the corresponding zero of $\omega_0^{(p-2)}$. Note in particular that corresponding $\xi_0$ and $\xi_1$ lie indeed on the same side of $\xi_*$.

For notational convenience, unlike what we have done so far we do not track here dependences on harmless constants and only focus on powers of $t$. We may also focus on the case where $t$ is large since a crude estimate shows that the quantity of interest is bounded by a multiple of $|t|$. We shall do so without mention from now on.

For any $\delta>0$ if $\omega^{(p-2)}$ does not vanish or, if it does, when $2|\xi_0-\xi_*|<\delta$ one obtains a bound by a multiple of $(\delta^2 |t|)^{-1/(p-2)}+\delta^{q+1}|t|$. Choosing then $\delta$ as a multiple of $|t|^{-(p-1)/(p+q(p-2))}$ one derives a bound decaying as $|t|^{-(q-1)/(p+q(p-2))}$ when either there is no $\xi_0$ or $|\xi_0-\xi_*|$ is smaller than some multiple of $|t|^{-(p-1)/(p+q(p-2))}$. This is indeed better than the claimed bound since $q(p-1)-(p+q(p-2))=q-p>0$. 

In turn, for any positive $\delta$ such that $2C_0|\xi_0-\xi_*|^{q-p+1}\leq\delta\leq |\xi_0-\xi_*|/4$ one may bound the studied quantity by a multiple of $(\delta|\xi_0-\xi_*| |t|)^{-1/(p-2)}+\delta |\xi_0-\xi_*|^q|t|$. From this, choosing $\delta$ as a sufficiently small multiple of $|t|^{-1}|\xi_0-\xi_*|^{-(q(p-2)+1)/(p-1)}$ provides a bound by a multiple of $|\xi_0-\xi_*|^{(q-1)/(p-1)}$ provided that $|\xi_0-\xi_*|$ is larger than some arbitrary small multiple of $|t|^{-(p-1)/(p+q(p-2))}$ and smaller than some multiple of $|t|^{-(p-1)/((2q-p+1)(p-2)+q-p+2)}=|t|^{-(p-1)/(p+q(p-2)+(q-p)(p-1))}$. 

Alternatively for any $\delta$ such that $0<\delta\leq |\xi_0-\xi_*|/4$ one may also obtain a bound like $(\delta|\xi_0-\xi_*| |t|)^{-1/(p-2)}+\delta$. From here choosing $\delta$ as a multiple of $(|t||\xi_0-\xi_*|)^{-1/(p-1)}$ yields a bound by a multiple of $(|t||\xi_0-\xi_*|)^{-1/(p-1)}$ provided that $|\xi_0-\xi_*|$ is larger than some multiple of $|t|^{-1/p}$. 

One concludes the proof by using the first estimate when there is no vanishing or $|\xi_0-\xi_*|$ is smaller than some multiple of $|t|^{-(p-1)/(p+q(p-2))}$, the second estimate from there to a multiple of $|t|^{-1/q}$ and the last one in the remaining zone. That this is indeed possible follows from the observation that $(p-1)/(p+q(p-2))=(p-1)/(q(p-1)-(q-p))>1/q$ and $(p-1)/(p+q(p-2)+(q-p)(p-1))=(p-1)/(q(p-1)+(q-p)(p-2))\leq 1/q$.
\end{proof}

We now prove Lemma~\ref{l:singular-phase-vdC}.

\begin{proof}
We chose $\eps_0$ and $C$ as in the foregoing proof and adopt the same simplifying convention. We note however that here instead of bounding integrals far from stagnating points by appealing to the van der Corput Lemma we use sharper bounds following from an inspection of its proof. To be more concrete note that one may replace bounds like $(\delta'\delta|\xi_0-\xi_*|\,|t|)^{-1}$ for integrals where $|\xi-\xi_*|>\delta'$ and $|\xi-\xi_0|>\delta$ obtained when $4\delta <|\xi_0-\xi_*|$ and $4\delta'<|\xi_0-\xi_*|$ by a bound by a multiple of a sum of
$$
\int_{\delta'}^{|\xi_0-\xi_*|-\delta} \frac{\dD\zeta}{\zeta^2(|\xi_0-\xi_*|^2-\zeta^2)|t|}
$$
and similar integral and boundary terms that are bounded by a multiple of 
$$
(\delta'|\xi_0-\xi_*|^2|t|)^{-1}+(\delta|\xi_0-\xi_*|^2|t|)^{-1}\,.
$$

As in the proof of Lemma~\ref{l:phase-vdC}, for any $\delta>0$ if $\omega'$ does not vanish or otherwise if its zero $\xi_0$ is such that $2|\xi_0-\xi_*|<\delta$ one obtains a bound by a multiple of $\delta^{-1}(\delta^2 |t|)^{-1}+\delta^q|t|$. Choosing then $\delta$ as a multiple of $|t|^{-2/(q+3)}$ one derives a bound decaying as $|t|^{-(q-3)/(q+3)}$ when either there is no $\xi_0$ or $|\xi_0-\xi_*|$ is smaller than some multiple of $|t|^{-2/(q+3)}$. This is indeed better than the claimed bound $(q-3)/(q+3)>(q-3)/2q$. 

With our preliminary remark in mind, we now observe that for any positive $\delta$ and $\delta'$ such that $2C_0|\xi_0-\xi_*|^{q-2}\leq\delta\leq |\xi_0-\xi_*|/4$ and $\delta'\leq |\xi_0-\xi_*|/4$ one may bound the quantity of interest by a multiple of 
$$
(\delta')^q|t|+(\delta'|\xi_0-\xi_*|^2|t|)^{-1}+(\delta|\xi_0-\xi_*|^2|t|)^{-1}+\delta |\xi_0-\xi_*|^{q-1}|t|\,.
$$
From this choosing $\delta$ and $\delta'$ as sufficiently small multiples of respectively $|t|^{-1}|\xi_0-\xi_*|^{-(q+1)/2}$ and $(|\xi_0-\xi_*||t|)^{-2/(q+1)}$ provides a bound by a multiple of 
$$
|\xi_0-\xi_*|^{(q-3)/2}+(|\xi_0-\xi_*|^{2q}|t|^{q-1})^{-1/(q+1)}
$$
provided that $|\xi_0-\xi_*|$ is larger than some arbitrary small multiple of $|t|^{-2/(q+3)}$ and smaller than some multiple of $|t|^{-2/(3(q-1))}$. Note that, by convexity, at any fixed time $t$, when $|\xi_0-\xi_*|$ varies in an interval the latter bound reaches is upper bound on the boundary of the interval.

At last when $\delta$ and $\delta'$ are such that $0<\delta\leq |\xi_0-\xi_*|/4$ and $0<\delta'\leq |\xi_0-\xi_*|/4$ one may also obtain a bound like 
$$
(\delta')^q|t|+(\delta'|\xi_0-\xi_*|^2|t|)^{-1}+(\delta|\xi_0-\xi_*|^2|t|)^{-1}+\delta |\xi_0-\xi_*|^{-1}\,.
$$
From here choosing $\delta$ as a multiple of $(|t||\xi_0-\xi_*|)^{-1/2}$ and $\delta'$ as sufficiently small multiple of $(|\xi_0-\xi_*||t|)^{-2/(q+1)}$ yields a bound by a multiple of $$
(|t||\xi_0-\xi_*|^3)^{-1/2}+(|\xi_0-\xi_*|^{2q}|t|^{q-1})^{-1/(q+1)}
$$ 
provided that $|\xi_0-\xi_*|$ is larger than some multiple of $|t|^{-2/(q+3)}$. Actually in the previous regime the latter bound is always smaller than a multiple of $(|t||\xi_0-\xi_*|^3)^{-1/2}$.

We conclude the proof by using the first estimate when there is no vanishing or $|\xi_0-\xi_*|$ is smaller than some multiple of $|t|^{-2/(q+3)}$, the second estimate from there to a multiple of $|t|^{-1/q}$ and the last one in the remaining zone. That this is indeed possible follows from the observation that $2/(q+3)=2/(2q-(q-3)))>1/q$ and $2/(3(q-1))=2/(2q+q-3)<1/q$.
\end{proof}


\medskip

\noindent{\it Acknowledgment.} The author would like to warmly thank Corentin Audiard for enlightening exchanges at an early stage of the present project.


\bibliographystyle{abbrv}
\bibliography{Ref} 

\end{document}